\newtheorem{theorem}{Theorem}[section]
\newtheorem{lemma}[theorem]{Lemma}
\newtheorem{corollary}[theorem]{Corollary}
\theoremstyle{definition}
\newtheorem{remark}[theorem]{{\bf Remark}}
\newtheorem{definition}[theorem]{Definition}
\newcommand{\hh}{\mathbb{H}}
\newcommand{\rr}{\mathbb{R}}
\newcommand{\boundOP}{\mathcal{B}}
\newcommand{\id}{\mathcal{I}}
\renewcommand{\Re}{\mathrm{Re}}
\newcommand{\BN}{\mathbb{N}}
\newcommand{\BD}{\mathbb{D}}
\newcommand{\BC}{\mathbb{C}}
\newcommand{\BI}{\mathbb{I}}
\newcommand{\BJ}{\mathbb{J}}
\newcommand{\BH}{\mathbb{H}}
\newcommand{\BS}{\mathbb{S}}
\newcommand{\cB}{\mathcal{B}}	
\newcommand{\cH}{\mathcal{H}}
\newcommand{\sN}{\mathscr{N}}
\newcommand{\sM}{\mathscr{M}}
\newcommand{\cM}{\mathcal{M}}
\newcommand{\cL}{\mathcal{L}}
\newcommand{\cC}{\mathcal{C}}
\newcommand{\cJ}{\mathcal{J}}
\newcommand{\ov}{\overline}
\newcommand{\llangle}{\langle \kern -0.2em \langle}	
\newcommand{\rrangle}{\rangle \kern -0.2em \rangle}
\crefname{enumi}{}{}
\crefname{enumii}{}{}
\title[Perturbation of normal quaternionic operators]
{Perturbation of normal quaternionic operators}
\author[P. Cerejeiras]{P. Cerejeiras}
\address{(PC) CIDMA, Departamento de Matem\'atica\\
Universidade de Aveiro\\
Campus de Santiago\\
P-3810-193, Aveiro\\
Portugal}
\email{pceres@ua.pt}
\email{}
\author[F. Colombo]{F. Colombo}
\address{(FC) Politecnico di
Milano\\Dipartimento di Matematica\\Via E. Bonardi, 9\\20133 Milano\\Italy}
\email{irene.sabadini@polimi.it}
\author[U. K\"ahler]{U. K\"ahler}
\address{(UK) CIDMA, Departamento de Matem\'atica\\
Universidade de Aveiro\\
Campus de Santiago\\
P-3810-193, Aveiro\\
Portugal}
\email{ukaehler@ua.pt}
\author[I. Sabadini]{I. Sabadini}
\address{(IS) Politecnico di
Milano\\Dipartimento di Matematica\\Via E. Bonardi, 9\\20133 Milano\\Italy}
\email{fabrizio.colombo@polimi.it}
\begin{document}

\begin{abstract}
The theory of quaternionic operators has applications in several  different fields such as  quantum mechanics, fractional evolution problems, and quaternionic Schur analysis, just to name a few. The main difference between complex and quaternionic operator theory is based on the definition of spectrum. In fact, in quaternionic operator theory the classical  notion of resolvent operator and the one of spectrum need to be replaced by the two $S$-resolvent operators and the $S$-spectrum. This is a consequence of the non-commutativity of the quaternionic setting. Indeed, the $S$-spectrum  of a quaternionic linear operator $T$ is given by the non invertibility of a second order operator. This presents new challenges which makes our approach to perturbation theory of quaternionic operators different from the classical case. In this paper we study the problem of perturbation of a quaternionic normal operator in a Hilbert space by making use of the concepts of $S$-spectrum and of slice hyperholomorphicity of the $S$-resolvent operators. For this new setting we prove results on the perturbation of quaternionic normal operators by operators belonging to a Schatten class and give conditions which guarantee the existence of a nontrivial hyperinvariant subspace of a quaternionic linear operator.
\end{abstract}
\maketitle

\vskip 1cm
\par\noindent
 AMS Classification: 47A10, 47A60.
\par\noindent
\noindent {\em Key words}: Perturbation quaternionic operators, $S$-spectrum, $S$-resolvent operators, Slice hyperholomorphic functions.
\vskip 1cm

\section{Introduction}

The spectral theory for quaternionic linear operators and, as a particular case, for vector operators,
has been an open problem for a long time because the notion of spectrum of a quaternionic linear operator was unclear. Indeed, the notions of left or right spectrum of a quaternionic linear operator are not suitable to develop a full theory. This situation changed a decade ago when the new notion of $S$-spectrum was introduced, see the book \cite{CSS}.
\\
\\
There are several reasons to study quaternionic spectral theory and below we mention some of them. First of all there is an interest coming from the study of partial differential equations (or more generally of pseudo-differential operators) over non-commutative structures.  Large attention has been given to the case of nilpotent Lie groups which has been studied since the 1970s (see~\cite{Ruzhansky}, ~\cite{Stein}). This is due to the fact that the corresponding Baker-Campbell-Hausdorff formula is then finite (that is to say the higher order commutators are zero after a finite order) which allows for an easier theory. Even more challenging are partial differential equations and pseudo-differential operators over other structures like quaternions where the Baker-Campbell-Hausdorff formula only exhibits a periodic nature (higher order commutators are equal to lower order commutators after a finite order) since these cases are obviously more complicated. The case of quaternions is even more important since they are closely linked to symmetries in the phase space and, therefore, the corresponding PDE's have also close connections with both time-frequency analysis and quantum mechanics. In the celebrated paper \cite{BvN},  Birkhoff and von Neumann
showed that quantum mechanics can be written only in the real, complex or in the quaternionic setting. This fact stimulated a numbers of works among which we mention \cite{adler, 12, 14, 21}.
Recently, see  \cite{ack,acks3}, the spectral theorem based on the $S$-spectrum for quaternionic normal operators was proved. This provides the grounds to study quantum mechanics in the quaternionic setting.
\\
Another question which was recently solved was to find a quaternionic analogue of the Riesz-Dunford functional calculus of the complex setting. This calculus can be naturally extended to quaternionic operators using the theory of slice
 hyperholomorphic functions, the $S$-spectrum and the $S$-resolvent operators which are the crucial objects to properly define the quaternionic functional calculus, also called $S$-functional calculus, see the books \cite{CSS,CSS2016}, and  \cite{acgs,DA}. This calculus allows to study the theory of quaternionic evolution operators which was developed in \cite{perturbation,FUCGEN,evolution}.
 We also point out that the $S$-resolvent operators naturally appear in the realization of quaternionic Schur functions
 which has allowed a rapid development of Schur analysis in the slice hyperholomorphic setting, see the book \cite{acsbook}.
\\
\\
More recently, it turned out that quaternionic spectral theory  is also a useful tool to study new classes of
fractional evolution problems.
In fact, using the quaternionic version of the $H^\infty$ functional calculus, one can define fractional powers of vector operators and obtain a new approach to fractional diffusion processes, see
\cite{Hinfty,FJTAMS,OurNewPaper} for more details.
\\
\\
The above facts provide sound motivations to consider the perturbation theory of quaternionic linear operators, whose investigation is well alive even in the classic complex, e.g. in \cite{Kato, KM, KMR}.
\\
\\
To understand the additional difficulties compared to the classical case which arise in a non-commutative setting, we now discuss some of the main differences between classical spectral theory and the spectral theory based on the $S$-spectrum.
Let us begin by recalling that, given a bounded complex linear operator $A$ acting on a complex Banach space $X$, its spectrum is defined as
by
$$
\sigma(A)=\{\lambda \in \mathbb{C}\ :\ \lambda \mathcal{I}-A\ \   \text{is not invertible}\}.
$$
For $\lambda$ in the resolvent set $\rho(A):=\mathbb{C}\setminus \sigma(A)$, the resolvent operator $ (\lambda \mathcal{I}-A)^{-1}$ is a holomorphic function with values in the Banach space $\mathcal{B}(X)$ of all bounded linear operators  on $X$ endowed with the natural norm. Now let us consider a bounded linear operator defined on a two sided quaternionic Banach space $V$; given a quaternionic operator $T$ one has to specify on which side the linearity is considered. In this paper we consider right linearity but, for the sake of simplicity, in this introduction we will simply write quaternionic linear operator without specifying the type of linearity where not needed. The operator $s \mathcal{I}-T$ acts on a vector $v\in V$ as $sv-Tv$ while $\mathcal{I}s-T$ acts on a vector $v\in V$ as $vs-Tv$. The first operator is right linear over $\mathbb H$ while the second is not. We note also that the first operator, though linear, does not seem to have any physical meaning, while the second gives the notion of right eigenvalues which is widely used in Physics and in linear algebra over non-commutative structures.
   Moreover, the inverse of both the operators above is not associated to any notion of hyperholomorphy. For these reasons,  in the quaternionic setting the appropriate notion of spectrum is the one of $S$-spectrum, which is defined by a second order operator.
Specifically, the $S$-spectrum of a quaternionic linear operator $T$ is defined as
$$
\sigma_S(T)=\{s\in \mathbb{H} \ :\ T^2-2{\rm Re}(s)T+|s|^2\mathcal{I}\ \ \text{is not invertible}\}
$$
where $\mathbb{H}$ denotes the algebra of quaternions, ${\rm Re}(s)$ is the real part of the quaternion $s$, and $|s|^2$ is the square of its
Euclidean norm. It is important to note that the point $S$-spectrum coincides with the set of right eigenvalues, see \cite{spectrum,GMP}, thus the operator $Q_s(T):=(T^2-2{\rm Re}(s)T+|s|^2\mathcal{I})^{-1}$, called pseudo-resolvent operator, is the linear operator associated with the notion of right eigenvalues.
The operator $(T^2-2{\rm Re}(s)T+|s|^2\mathcal{I})^{-1}$ is defined on the $S$-resolvent set $\rho_S(T):=\mathbb{H}\setminus \sigma_S(T)$ and it is a continuous function with values in the space $\mathcal{B}(V)$ of all bounded quaternionic linear operators, but it is not hyperholomorphic with respect to any known notion of hyperholomorphicity.
To define the analogue of the resolvent operator $(\lambda \mathcal{I}-A)^{-1}$ with some analyticity properties, denote by $\overline{s}$ the conjugate of the quaternion $s$, and we define the $S$-resolvent operators as
$$
S^{-1}_L(s,T):=-Q_s(T)( T-\overline{s}\mathcal{I})\ \ \ \text{and}\ \ \ S^{-1}_R(s,T):=-(T-\overline{s}\mathcal{I})Q_s(T).
$$
These operators defined on $\rho_S(T)$ are right and left slice hyperholomorphic operator-valued functions, respectively, where the notions of left and right  slice-hyperholomorphic functions will be defined in the sequel.
 Thus in the quaternionic setting there are two resolvent operators $S^{-1}_L(s,T)$ and $S^{-1}_R(s,T)$,
 and, moreover, the $S$-resolvent equation involves both the $S$-resolvent operators, see Section 2 for more details.
 Using the notion of slice hyperholomorphic functions we can define the analog of the Riesz-Dunford functional calculus for quaternionic operators
and in a natural way we can define the Riesz-projectors, see \cite{acgs}.
\\
\\
 We also want to point out that there exists other approaches to functional calculi in higher dimensions. In a series of papers, see for example \cite{jefferies,jmc,jmcpw,mcp},  McIntosh and co-authors
introduced and studied the functional calculus for $n$-tuples of operators using the more classical theory of
 monogenic functions. In this theory, one introduces a different notion of spectrum based on the Cauchy integral formula for monogenic functions. This theory, however, lacks the appropriate tools for the study of perturbations of normal operators such as the lack of a spectral theorem.
 \\
 \\
 The literature contains a great amount of works on invariant subspaces  of  operators in a Hilbert space; without claiming completeness, we mention as examples the works of  Livsic  \cite{Livsic}, Brodskii \cite{Brodskii}, Sz.Nagy-Foias and co-authors \cite{SzNFBK}, Gohberg and Krein  \cite{GK1, GK2} and the references therein.
   In this paper, we consider the problem of the perturbation of normal operators in a Hilbert space and the existence of (hyper-)invariant subspaces for quaternionic normal operators. The classic results in the complex case can be found in the book \cite{RR1973} by  Radjavi and Rosenthal.
The knowledge of invariant subspaces gives information on the structure of operators, however they do not always exist: there exist  bounded linear operators on a complex inner-product space without a non-trivial invariant subspace.
    We are going to study compact perturbations of normal operators on a quaternionic Hilbert space whose spectrum lies on a smooth Jordan arc for specific 2D-subspaces, later called slices.
 From these perturbation results one can deduce, under suitable assumptions, the existence of invariant subspaces.
 We also discuss the existence of hyperinvariant subspaces, which are related with the structure of the so-called commutant of $T$, namely the set of operators commuting with $T$.
  We work in a class of vector-valued slice hyperholomorphic functions that have a slice hyperholomorphic continuation across arcs contained in the $S$-spectrum of the operators intersected with a complex plane. As we shall see this is not reductive, provided the symmetry properties on the $S$-spectrum.
  \\
  It is necessary to point out that the non-commutative setting of quaternions involves several challenges from the technical side. If one looks at the classic proofs in \cite{RR1973} one can easily see that they are heavily dependent on the commutativity of the underlying complex field. For example, in the complex case, given a linear operator $A$, any linear operator $B$ commuting with $A$ also commutes with the resolvent $(\lambda \mathcal{I}-A)^{-1}$. In the quaternionic case, a right linear operator $B$ commuting with a given right linear operator $T$ does not commute, in general, with the $S$-resolvent operators because it does not commute with the quaternionic variable. It does commute with the pseudo-resolvent, since it has only real coefficients, but as we have discussed, this operator does not have any analyticity property. Additionally, in the quaternionic setting one has to face the fact that the algebraic inverse of the (non-linear) operator $T-\mathcal{I}s$, the operator which gives the spectrum (the pseudo-resolvent $Q_s(T)$) and the two $S$-resolvent operators correspond to four different operators. As a matter of fact, the algebraic inverse plays no role.  The pseudo-resolvent and the two $S$-resolvent operators are all required for the proofs of the various results. Furthermore, the two $S$-resolvent operators cannot be simply used in an arbitrary order. This also allows us to demonstrate the properties which are really required in the quaternionic setting. \\
\\
The plan of the paper is as follows.
In Section 2 we introduce the splitting of the $S$-spectrum in approximate point $S$-spectrum and compression $S$-spectrum and we show some related results, moreover we give a quick overview of the on the $S$-functional calculus.
Section 3 contains some results related with quaternionic normal operators.
In Section 4 we show some results on the Schatten class of quaternionic normal operators.
In Section 5 we state and prove our main results on the perturbation of normal operators and some consequences.
More specifically, we prove results which guarantee the existence of a nontrivial (hyper-)invariant subspace of a quaternionic linear operator $T$ and we discuss some consequences.

\section{Preliminary results on quaternionic bounded operators}

This section contains, besides some preliminaries, new results on the properties of the splitting of the $S$-spectrum of a quaternionic linear operator
of $T$ in terms of the approximate point $S$-spectrum $\Pi_{S}(T)$  and the compression $S$-spectrum $\Gamma_S(T)$ of $T$. Finally, we recall the $S$-functional calculus.
\\
\\
We denote by $\hh$ the algebra of quaternions.
The imaginary units  in $\hh$ are denoted by $e_1$, $e_2$, $e_3$, they satisfy the relations
$ e_1^2=e_2^2=e_3^2=-1$, $e_1e_2 =-e_2e_1 =e_3$, $e_2e_3 =-e_3e_2 =e_1$, $e_3e_1 =-e_1e_3 =e_2$
 and an element in $\hh$ is of the form $q=x_0+e_1x_1+e_2x_2+e_3x_3$, for $x_\ell\in\rr$.
The real part, the imaginary part and the modulus of a quaternion are defined as
$ {\rm Re}(q)=x_0$, ${\rm Im}(q)=e_1x_1+e_2x_2+e_3x_3$,
$|q|^2=x_0^2+x_1^2+x_2^2+x_3^2$, respectively.
The conjugate of the quaternion $q=x_0+e_1x_1+e_2x_2+e_3x_3$ is defined by
$$
\bar q={\rm Re }(q)-{\rm Im }(q)=x_0-e_1x_1-e_2x_2-e_3x_3.
$$
Let us denote by $\mathbb{S}$ the unit sphere of purely imaginary $\mathbb{S}$
quaternions, i.e.
$$
\mathbb{S}=\{q=e_1x_1+e_2x_2+e_3x_3\ {\rm such \ that}\
x_1^2+x_2^2+x_3^2=1\}.
$$
Given a non-real quaternion $q=x_0+{\rm Im} (q)=x_0+\BJ |{\rm Im} (q)|$, $\BJ={\rm Im} (q)/|{\rm Im} (q)|\in\mathbb{S}$, we can associate to it the 2-dimensional sphere defined by
$$
[q]=\{x_0+\BJ  |{\rm Im} (q)| \ : \ \BJ \in\mathbb{S}\}.
$$

We also need the notion of slice-hyperholomorphicity which will replace the notion of analyticity in the functional calculus. Let us start with the notion of axially symmetric domains.

\begin{definition} \label{Def:2.1}
Let $U \subseteq \mathbb{H}$ be an open set. We say that $U$ is
\textnormal{axially symmetric} if, for all $u+\BJ v \in U$, the whole
2-sphere $[u+\BJ v]$ is contained in $U$.
\end{definition}

The above notion allows us to introduce the notion of slice-hyperholomorphicity for functions defined over axially symmetric domains.

\begin{definition}[Slice hyperholomorphic functions]  \label{Def:2.2}
 Let $U\subseteq\hh$ be an axially symmetric open set
 and let $\mathcal{U}\subseteq\mathbb{R}\times\rr$ be such that $q=u+\BJ v\in U$ for all $(u,v)\in\mathcal{U}$.
We say that a functions on $U$ of the form
$$f(q)=\alpha(u,v)+\BJ \beta(u,v)$$
is left slice hyperholomorphic if
 $\alpha$, $\beta$ are $\mathbb{H}$-valued differentiable functions such that
 $$
 \alpha(u,v)=\alpha(u,-v), \ \ \ \beta(u,v)=-\beta(u,-v)\ \ \ {\rm for \ all}\ \  (u,v)\in \mathcal{U}
 $$
 and if $\alpha$ and $\beta$ satisfy the Cauchy-Riemann system
 $$
\partial_u \alpha-\partial_v\beta=0,\ \ \ \ \
\partial_v\alpha+\partial_u \beta=0.
$$
When $f$ is of the form
$$
f(q)=\alpha(u,v)+\beta(u,v) \BJ
$$
with the above properties for $\alpha$ and $\beta$ we say that $f$ is a right slice hyperholomorphic function on $U$.
The set of left (resp. right) slice hyperholomorphic functions on $U$ will be denoted by $\mathcal{SH}_L(U)$ (resp. $\mathcal{SH}_R(U)$).
Slice hyperholomorphic functions on $U$ such that $\alpha(u,v)$ and $\beta(u,v)$ are real-valued are called intrinsic and the corresponding set is denoted by $\mathcal{N}(U)$.
\end{definition}

\begin{definition}\label{starprod:left}
Let $U\subseteq\mathbb{H}$ be an axially symmetric open set and let
$f,g:  U\to \mathbb{H}$ be left slice hyperholomorphic functions. Let
$f(x+\mathbb{J}y)=\alpha(x,y)+\mathbb{J}\beta(x,y)$,
$g(x+\mathbb{J}y)=\gamma(x,y)+\mathbb{J}\delta(x,y)$. Then we define $\star_l$-product as
\begin{equation}\label{opleft}
(f\star_l g)(x+\mathbb{J} y):= (\alpha\gamma -\beta \delta)(x,y)+
\mathbb{J}(\alpha\delta +\beta \gamma )(x,y).
\end{equation}
\end{definition}
It can be easily verified that,
by its construction, the function $f\star_l g$ is left slice
hyperholomorphic. A similar multiplication can be defined in the case of right slice hyperholomorphic functions and it is denoted by $\star_r$, according to the position of $\mathbb J$. When it is not needed to distinguish the two cases, we will simply write $\star$.
\begin{remark}\label{remark:series}{\rm
Let us consider the case in which $U$ is a ball with center at a real point (let us assume at
the origin for simplicity). Then it  is immediate to verify (with the techniques used in the complex case), that $f$ is slice hyperholomorphic if and only if it admits in $U$ a converging power series
expansion  $f(q)=\sum_{n= 0}^\infty
q^n a_n$, $a_n\in\mathbb{H}$. If $g(q)=\sum_{n= 0}^\infty q^n b_n$, with $b_n\in \mathbb{H}$ for all
$n$, then
\[
(f\star_l g)(q):=\sum_{n= 0}^\infty q^n (\sum_{r=0}^n
a_rb_{n-r}).
\]
Thus, in this case, the notion of $\star_l$-product coincides with the classical notion of product of series with coefficients in a ring.}
\end{remark}

\begin{remark}\label{star}{\rm
The above notions of  slice hyperholomorphicity and $\star$-multiplication can be extended to operator-valued functions, see \cite{acsbook}. We can also define a notion of inverse of a function with respect to the $\star$-product, see e.g. \cite{acsbook}, but since we do not need it in its full generality, we introduce it just for the case we will need, namely the $\star$-inverse of the function $f(q)=q-s$ which is both left and right hyperholomorphic in $q$. We have
\[
\begin{split}
(q-s)^{-\star_l}&=(q^2- 2{\rm Re}(s) q+|s|^2)^{-1}(q-\bar s)\\
(q-s)^{-\star_r}&=(q-\bar s)(q^2- 2{\rm Re}(s) q+|s|^2)^{-1}.
\end{split}
\]
We note that $g(s)=q-s$ is left and right slice hyperholomorphic in $s$ and we can construct its left and right $\star$-inverses in the variable $s$. These inverses are related to the inverses $f^{-\star_l}$, $f^{-\star_r}$ computed in $q$ as follows, since, see e.g. \cite{CSS}, the following identities hold:
\[
\begin{split}
f^{-\star_l}(q)=(q^2 -2 {\rm Re} (s) q+|s|^2)^{-1}(q-\bar s)&=-(s-\bar q)(s^2-2{\rm
Re}(q) s+|q|^2)^{-1}=-g^{-\star_r}(s)\\
f^{-\star_r}(q)=(q-\bar s)(q^2-2{\rm Re}(s) q+|s|^2)^{-1}&= - (s^2-2{\rm Re}(q)s+|q|^2)^{-1}(s-\bar q)=-g^{-\star_l}(s) .
\end{split}
\]
}
\end{remark}
Slice hyperholomorphic functions are those functions for which the $S$-functional calculus can be defined, as we will see in the sequel.

Now, let $V$ be a right vector space on $\mathbb{H}$, and consider a right linear operator $T$ on $V$, i.e.
an operator such that
$$
 T(u+v)=T(u)+T(v),\ \ \ \
T(us)=T(u)s, \ \ {\rm for\  all}\   s\in\mathbb{H},\  u,v\in V.
$$
In the sequel we will denote by $\mathcal{B}(V)$ the quaternionic Banach space of all right linear bounded operators endowed with the natural norm. Furthermore, a vector space $V$ which is a quaternionic Hilbert space will be denoted by $\mathcal H$ and the Banach algebra of right linear quaternionic bounded operators on $\cH$ will be denoted by $\cB(\cH)$.

\begin{definition} \label{Def:4.4}
Let $T\in \mathcal{B}(\mathcal{H})$. The adjoint $T^*$ of $T$ is the unique operator that satisfies
\[ \langle T^* x, y\rangle = \langle x, Ty\rangle\quad\forall x,y\in\mathcal{H}.\]
Moreover,
 for  $T\in\mathcal{B}(\mathcal{H})$, we say that the operator is
 selfadjoint, if $T^* = T$; anti-selfadjoint, if $T^* = - T$; normal if $T^*T = TT^*$;
 unitary if $T^* = T^{-1}$;
 positive if $\langle x, Tx\rangle \geq 0$ for all $x\in\mathcal{H}$.
 \end{definition}

Since the standard generalization of the notion of spectrum as well as the question of solvability of the equation $Tv-vs=0$ lead to discussions of invertibility of a nonlinear operator, we are going to use the following notion of the S-spectrum, which reduces the question to the invertibility of a second order scalar (with respect to the underlying algebra) operator.

\begin{definition} \label{Def:2.3}
Let $T\in\mathcal{B}(V)$. We define the {\em S-spectrum } of $T$ as
$$
\sigma_S(T) = \{s\in\mathbb{H}: T^2 - 2\Re(s)T + |s|^2\id \text{ is not invertible}\}
$$
and we define the {\em S-resolvent set }  of $T$ as
$$
\rho_S(T) = \mathbb{H}\setminus\sigma_S(T).
$$
\end{definition}
Hereby, the second order operator
$$
Q_s(T):=(T^2 - 2\Re(s)T + |s|^2\id)^{-1},\ \ \ s\in \rho_S(T),
$$
 will be called the pseudo-resolvent operator.

While we have the natural notion of the pseudo-resolvent $Q_s(T)$ this does not give a good replacement of the classic resolvent operator $(A-\lambda\mathcal{I})^{-1}$ alone since it originates from a second order operator. In fact, for the actual study of the operator $T$ we also need the notion of the $S$-resolvent operator which can be defined in a left- and a right-form. As will be clear in the sequel only the interplay of all three operators will provide an adequate replacement of the classic resolvent operator.

\begin{definition}\label{Def:3.7}
Let $T\in\mathcal{B}(V)$. For $s\in\rho_S(T)$, we define the {\em left S-resolvent operator} as
$$
S_L^{-1}(s,T) = -(T^2-2\Re(s)T + |s|^2\mathcal{I})^{-1}(T-\overline{s}\,\mathcal{I}),
$$
and the {\em right S-resolvent operator} as
$$
S_R^{-1}(s,T) = -(T-\overline{s}\id)(T^2-2\Re(s)T+|s|^2\mathcal{I})^{-1}.
$$
\end{definition}
It is easy to show that the S-resolvent operators are slice hyperholomorphic operator-valued functions.

\begin{theorem}\label{ResolventRegular}\cite{acsbook}
Let $T\in\mathcal{B}(V)$.
\begin{enumerate}
\item[(i)]
The left S-resolvent operator $S_L^{-1}(s,T)$ is a $\boundOP(V)$-valued right-slice hyperholomorphic function of the variable $s$ on $\rho_S(T)$.
\item[(ii)]
 The right S-resolvent operator $S_R^{-1}(s,T)$ is a $\mathcal{B}(V)$-valued left-slice hyperholomorphic function of the variable $s$ on $\rho_S(T)$.
 \end{enumerate}
\end{theorem}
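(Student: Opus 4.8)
The plan is to verify both statements directly from the defining conditions of slice hyperholomorphicity in Definition~\ref{Def:2.2}, extended to operator-valued functions as noted in Remark~\ref{star}. First I would fix an imaginary unit $\mathbb{J}\in\mathbb{S}$ and restrict to the slice $\{x_0+\mathbb{J}y : x_0,y\in\mathbb{R}\}$, writing $s=x_0+\mathbb{J}y$ so that $\Re(s)=x_0$, $|s|^2=x_0^2+y^2$ and $\overline{s}\,\mathcal{I}=x_0\mathcal{I}-y\,L_{\mathbb{J}}$, where $L_{\mathbb{J}}$ denotes left multiplication by $\mathbb{J}$. The goal is then to exhibit $S_L^{-1}(s,T)$ in the form $\alpha(x_0,y)+\beta(x_0,y)\mathbb{J}$ with operator-valued coefficients and to check the parity relations and the Cauchy--Riemann system; part (ii) is completely analogous, with $\mathbb{J}$ appearing on the left.

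The observation that isolates all the quaternionic structure is that the pseudo-resolvent $Q_s(T)=P^{-1}$, with $P:=T^2-2x_0T+(x_0^2+y^2)\mathcal{I}$, has real coefficients: it is a function of $T$ alone, hence commutes with $T$ and with $T-x_0\mathcal{I}$, and it depends on $y$ only through $y^2$. Expanding
\[
S_L^{-1}(s,T)=-Q_s(T)\bigl(T-\overline{s}\,\mathcal{I}\bigr)=-Q_s(T)(T-x_0\mathcal{I})-y\,Q_s(T)\,L_{\mathbb{J}},
\]
I would set $\alpha(x_0,y):=-Q_s(T)(T-x_0\mathcal{I})$ and $\beta(x_0,y):=-y\,Q_s(T)$, so that $S_L^{-1}(s,T)=\alpha+\beta\mathbb{J}$ in the bimodule $\mathcal{B}(V)$. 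Since $Q_s(T)$ is even in $y$, the coefficient $\alpha$ is even and $\beta$ is odd in $y$, giving the required symmetry $\alpha(x_0,y)=\alpha(x_0,-y)$, $\beta(x_0,y)=-\beta(x_0,-y)$. Because invertibility of $P$ depends only on $\Re(s)$ and $|s|$, the set $\rho_S(T)$ is axially symmetric and the slicewise coefficients depend only on $(x_0,y)$, so the construction does define a function on an axially symmetric domain as required; differentiability of $(x_0,y)\mapsto Q_s(T)$ there follows from smoothness of operator inversion off the $S$-spectrum.

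It then remains to verify $\partial_{x_0}\alpha-\partial_y\beta=0$ and $\partial_y\alpha+\partial_{x_0}\beta=0$. From $\partial_{x_0}P=-2(T-x_0\mathcal{I})$ and $\partial_yP=2y\mathcal{I}$ one gets $\partial_{x_0}P^{-1}=2P^{-1}(T-x_0\mathcal{I})P^{-1}$ and $\partial_yP^{-1}=-2yP^{-2}$, and a short computation using only that $P^{-1}$ commutes with $T$ yields $\partial_{x_0}\alpha-\partial_y\beta=2P^{-1}-2P^{-2}\bigl[(T-x_0\mathcal{I})^2+y^2\mathcal{I}\bigr]$; the identity $(T-x_0\mathcal{I})^2+y^2\mathcal{I}=P$ then forces this to vanish, and the second equation vanishes in the same way. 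For part (ii), the analogous expansion of $S_R^{-1}(s,T)=-(T-\overline{s}\,\mathcal{I})Q_s(T)$ places $L_{\mathbb{J}}$ on the left, producing the left form $\alpha'+\mathbb{J}\beta'$, and the identical differentiation settles its Cauchy--Riemann equations.

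I expect the only genuinely delicate point to be the non-commutativity bookkeeping: a naive differentiation of $S_L^{-1}(s,T)$ as a single quaternion-operator expression mixes factors that do not commute. The device that avoids this is precisely the splitting above, which confines $\mathbb{J}$ to the single factor $L_{\mathbb{J}}$ arising from $\Im(\overline{s})$ and leaves the Cauchy--Riemann check to be carried out entirely among the real-coefficient operators $P^{-1}$ and $T-x_0\mathcal{I}$, which do commute. Once this separation is in place, the remaining computation is routine.
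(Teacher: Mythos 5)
Your proof is correct, but note that the paper itself contains no proof of this statement: Theorem \ref{ResolventRegular} is quoted from \cite{acsbook} with only the remark that it is ``easy to show,'' so there is nothing internal to compare against; your argument essentially supplies the proof that the paper delegates to the citation. The verification is sound, and the three points on which it rests are exactly the right ones: (a) the pseudo-resolvent $Q_s(T)=P^{-1}$, $P=T^2-2x_0T+(x_0^2+y^2)\mathcal{I}$, has real coefficients, hence commutes with $T$ and depends on $y$ only through $y^2$, which gives both the parity conditions and the commutations needed later; (b) the single quaternionic factor $L_{\mathbb{J}}$ coming from $\overline{s}\,\mathcal{I}=x_0\mathcal{I}-yL_{\mathbb{J}}$ sits on the right of $\beta=-yQ_s(T)$ in $S_L^{-1}(s,T)=-Q_s(T)(T-x_0\mathcal{I})-yQ_s(T)L_{\mathbb{J}}$ (in the bimodule $\mathcal{B}(V)$, right multiplication of an operator by $\mathbb{J}$ is precisely composition with $L_{\mathbb{J}}$ on the inside), whereas for $S_R^{-1}$ it lands on the left of $-yQ_s(T)$ --- this correctly reproduces the asymmetry of the statement, $S_L^{-1}$ being \emph{right} and $S_R^{-1}$ being \emph{left} slice hyperholomorphic; (c) the Cauchy--Riemann system reduces to the operator identity $(T-x_0\mathcal{I})^2+y^2\mathcal{I}=P$. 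Your differentiation is correct: with $\alpha=-P^{-1}(T-x_0\mathcal{I})$ and $\beta=-yP^{-1}$ one gets
\[
\partial_{x_0}\alpha-\partial_y\beta=2P^{-1}-2P^{-2}\left[(T-x_0\mathcal{I})^2+y^2\mathcal{I}\right]=0,
\qquad
\partial_y\alpha+\partial_{x_0}\beta=2yP^{-2}(T-x_0\mathcal{I})-2yP^{-2}(T-x_0\mathcal{I})=0,
\]
and since $\alpha,\beta$ do not change in part (ii) (because $T$ commutes with $P^{-1}$), the same computation settles the left-slice case. Your remarks on axial symmetry and openness of $\rho_S(T)$, and on smoothness of $(x_0,y)\mapsto P^{-1}$, dispose of the remaining regularity issues, so the argument is complete.
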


Furthermore, they give also rise to a resolvent equation involving both resolvent operators. Hereby, one has to be careful that due to the non-commutativity the resolvent operators can only be used in a fixed order.

\begin{theorem}\label{SREQ}\cite{acgs}
Let $T\in \mathcal{B}(V)$ and let $s,p\in \rho_S(T)$. Then the equation
\[
\begin{split}\label{SREQ1}
S_R^{-1}(s,T)S_L^{-1}(p,T)&=\left[\left(S_R^{-1}(s,T)-S_L^{-1}(p,T)\right)p-\overline{s}\left(S_R^{-1}(s,T)-S_L^{-1}(p,T)\right)\right]
\\
&\cdot(p^2-2\Re(s)p+|s|^2)^{-1}
\end{split}
\]
holds true. Equivalently, it can also be written as
\[
\begin{split}\label{SREQ2}
S_R^{-1}(s,T)S_L^{-1}(p,T)&=(s^2-2\Re[p]s+|p|^2)^{-1}
\\
&
\cdot\left[\left(S_L^{-1}(p,T) - S_R^{-1}(s,T)\right)\overline{p}-s\left(S_L^{-1}(p,T) - S_R^{-1}(s,T)\right)
 \right].
\end{split}
\]
\end{theorem}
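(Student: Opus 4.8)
The plan is to turn the identity into a purely algebraic computation, driven by the two single-variable resolvent equations and by the fact that the symmetric combinations of a quaternion with its conjugate are real, hence central.

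First I would recall the left and right $S$-resolvent equations, which follow from Definition~\ref{Def:3.7} by a direct computation (using that $Q_p(T)$ and $T$ commute and that $p+\overline{p}=2\Re(p)$, $p\overline{p}=|p|^2$ are real):
\[
S_L^{-1}(p,T)\,p-T\,S_L^{-1}(p,T)=\mathcal{I},\qquad s\,S_R^{-1}(s,T)-S_R^{-1}(s,T)\,T=\mathcal{I}.
\]
Abbreviating $A:=S_R^{-1}(s,T)$ and $B:=S_L^{-1}(p,T)$, I would multiply the first equation on the left by $A$ and the second on the right by $B$, which gives
\[
ABp-ATB=A,\qquad sAB-ATB=B.
\]
Subtracting cancels the common term $ATB$ and produces the Sylvester-type relation
\[
ABp-sAB=A-B.
\]

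Writing $M:=AB=S_R^{-1}(s,T)S_L^{-1}(p,T)$ and $D:=A-B$, this reads $Mp-sM=D$. To reach the first form I would compute $Dp-\overline{s}D$; substituting $D=Mp-sM$ yields $Mp^2-sMp-\overline{s}Mp+|s|^2M$, and since $s+\overline{s}=2\Re(s)$ and $s\overline{s}=|s|^2$ are real and so commute with $M$, the two middle terms collapse to $-2\Re(s)Mp$, giving
\[
Dp-\overline{s}D=M\bigl(p^2-2\Re(s)p+|s|^2\bigr).
\]
Right-multiplication by the scalar inverse $(p^2-2\Re(s)p+|s|^2)^{-1}$, which exists precisely when $p\notin[s]$, then produces the first displayed identity. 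Symmetrizing instead in the variable $p$, that is, forming $sD-D\overline{p}$ and using $p+\overline{p}=2\Re(p)$ and $p\overline{p}=|p|^2$, one pulls the scalar factor $s^2-2\Re(p)s+|p|^2$ out on the left and reaches the second, equivalent expression.

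The subtle point throughout is the non-commutativity. Unlike the classical first resolvent identity, the quaternionic variables $s$ and $p$ commute neither with $T$ nor with one another, so the order of every factor must be preserved and the two $S$-resolvent operators can never be interchanged; this is why the left and right equations have to be combined, and only in the order above. I expect the main obstacle to be the careful verification of the two $S$-resolvent equations within the two-sided module, keeping track of the side on which each quaternion acts. Once these are available, the reduction succeeds only because the combinations $s+\overline{s}$, $s\overline{s}$ (respectively $p+\overline{p}$, $p\overline{p}$) are real and therefore central, which is exactly what permits the scalar quadratic polynomials to be factored out on the appropriate side.
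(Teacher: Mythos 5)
Your strategy is sound, and in fact it is essentially the proof from the literature: the paper itself gives no proof of Theorem~\ref{SREQ} (it is quoted from \cite{acgs}), and the proof there likewise combines the two one-variable identities $S_L^{-1}(p,T)\,p - T\,S_L^{-1}(p,T) = \mathcal{I}$ and $s\,S_R^{-1}(s,T) - S_R^{-1}(s,T)\,T = \mathcal{I}$, multiplying the first on the left by $S_R^{-1}(s,T)$, the second on the right by $S_L^{-1}(p,T)$, and cancelling the common term $S_R^{-1}(s,T)TS_L^{-1}(p,T)$. Your verification of these identities, your Sylvester relation $Mp - sM = A - B$, and the collapse of the middle terms using that $s+\overline{s}$ and $s\overline{s}$ are real are all correct, provided one fixes the convention $(Bp)v = B(pv)$ and $(sB)v = s(Bv)$, which is the only one compatible with right linearity; this does give the first displayed identity. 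You are also right that the quaternion $p^2 - 2\Re(s)p + |s|^2$ is invertible exactly when $p \notin [s]$, a hypothesis the paper's statement omits but which appears in \cite{acgs}.

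However, your last step hides a genuine discrepancy. Carrying out your own computation, $sD - D\overline{p} = -(s^2 - 2\Re(p)s + |p|^2)M$, so what your method actually yields is
\begin{equation*}
S_R^{-1}(s,T)S_L^{-1}(p,T) = (s^2 - 2\Re(p)s + |p|^2)^{-1}\left[\left(S_R^{-1}(s,T) - S_L^{-1}(p,T)\right)\overline{p} - s\left(S_R^{-1}(s,T) - S_L^{-1}(p,T)\right)\right],
\end{equation*}
which is the \emph{negative} of the second display in the theorem, where the differences appear as $S_L^{-1}(p,T) - S_R^{-1}(s,T)$. The two displays in the theorem are in fact mutually inconsistent, as one checks with $T=0$ on $V=\mathbb{H}$, $s = e_1$, $p = 2e_2$: there $S_R^{-1}(s,T)S_L^{-1}(p,T)$ and the right-hand side of the first display are both left multiplication by $e_3/2$, while the right-hand side of the second display is left multiplication by $-e_3/2$. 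The first display is the one that agrees with \cite{acgs}, so the fault lies in the paper's transcription of the second form, not in your algebra; but your sentence ``one \dots reaches the second, equivalent expression'' is then not literally true. Correctly executed, your symmetrization reaches the sign-corrected formula above and thereby exposes a typo in the statement; you should record that corrected form (or flag the inconsistency) rather than assert agreement with the display as written.
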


As in the complex case, it is possible to define some splitting of the $S$-spectrum and to this end we recall the following well known theorem, whose proof is the same as in the complex case:
\begin{theorem}\label{Th:3.1}
A quaternionic bounded linear operator $A$ that satisfies the two conditions:
\begin{enumerate}
\item[(i)] There exists $K> 0$ such that $\|Av\|\geq K\|v\|$ for $v\in D(A)$ ($A$ is bounded from below)
\item[(ii)] the range of $A$ is dense
\end{enumerate}
is invertible.
\end{theorem}
The following definition for the splitting of the spectrum is based on the previous theorem on the invertibility of linear operators:
 \begin{definition} \label{Def:3.2}
 The point $S$-spectrum of $T$, denoted by $\Pi_{0,S}(T)$, is defined as
 $$
 \Pi_{0,S}(T)=\{   s \in \mathbb{H} \ : \ T^2 - 2\Re(s)T + |s|^2\mathcal{I}  \text{ is not one-to-one}\}.
 $$
The approximate point $S$-spectrum of $T$, denoted by $\Pi_{S}(T)$, is defined as
$$
\Pi_{S}(T)=\{s\in \mathbb{H}\ :\  T^2 - 2\Re(s)T + |s|^2\mathcal{I} \text{ is not bounded from below}\}.
$$
The compression $S$-spectrum of $T$, denoted by $\Gamma_{S}(T)$, is defined as
$$
\Gamma_{S}(T)=\{s\in \mathbb{H}\ :\   \text{ the range of }\ T^2 - 2\Re(s)T + |s|^2\mathcal{I} \text{ is not dense}\}.
$$
 \end{definition}

 From the definition it follows that
 $$
 \Pi_{0,S}(T)\subset \Pi_{S}(T).
 $$
There are several basic statements about the S-spectrum which are the analogues of the corresponding well-known facts in the classic case. While the main ideas to prove the results below follow those of the complex case, the proofs contain some suitable substantial changes as they involve the pseudo-resolvent.
\\
We start by proving a result generalizing the Fredholm alternative theorem:
\begin{theorem}\label{Fredholm}
Let $T$ be a compact operator acting on a quaternionic Hilbert space $\mathcal H$ and $s\not=0$. If ${\rm ker}( T^2 -2{\rm Re}(s) T +|s|^2 \mathcal I)=\{0\}$ then $T^2 -2{\rm Re}(s) T +|s|^2 \mathcal I$ is invertible.
\end{theorem}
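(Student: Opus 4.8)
The plan is to reduce the statement to the classical Fredholm alternative and then verify the two hypotheses of Theorem~\ref{Th:3.1}. Since $s\neq 0$ we have $|s|^2\neq 0$, so I would factor
$$
T^2-2\Re(s)T+|s|^2\mathcal{I}=|s|^2\left(\mathcal{I}-K\right),\qquad K:=\frac{1}{|s|^2}\bigl(2\Re(s)T-T^2\bigr).
$$
Because $T$ is compact, so is $T^2$, hence $K$ is compact; and since $|s|^2$ is a nonzero real scalar, the operator $T^2-2\Re(s)T+|s|^2\mathcal{I}$ is invertible if and only if $A:=\mathcal{I}-K$ is, and the two share the same kernel. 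Thus everything reduces to showing: \emph{if $K$ is compact and $\ker(\mathcal{I}-K)=\{0\}$, then $\mathcal{I}-K$ is invertible.} I would prove this by establishing that $A$ is bounded from below and has dense range, which is exactly what Theorem~\ref{Th:3.1} requires.

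First I would show $A$ is bounded from below. If not, there is a sequence $(v_n)$ with $\|v_n\|=1$ and $Av_n\to 0$. Compactness of $K$ lets me pass to a subsequence along which $Kv_n\to w$ for some $w\in\mathcal{H}$; since $v_n=Av_n+Kv_n$, this forces $v_n\to w$, so $\|w\|=1$ and, by continuity, $Aw=\lim Av_n=0$, contradicting $\ker A=\{0\}$. Hence $A$ is bounded from below, and in particular $\ran(A)$ is closed.

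The dense-range condition is the heart of the matter, and I would obtain it by proving that $A$ is in fact surjective, via the classical Riesz iterated-range argument adapted to the quaternionic setting. For each $n$ set $R_n:=\ran(A^n)$. Expanding $A^n=(\mathcal{I}-K)^n=\mathcal{I}-K_n$ shows $K_n$ is again compact, and injectivity of $A$ makes every power $A^n$ injective; rerunning the previous paragraph with $A^n$ in place of $A$ shows each $A^n$ is bounded from below, so each $R_n$ is closed. These form a decreasing chain $R_0\supseteq R_1\supseteq R_2\supseteq\cdots$. Suppose, for contradiction, that $A$ is not surjective, i.e. $R_1\subsetneq R_0=\mathcal{H}$. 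Injectivity of $A$ forces all inclusions to be strict: if $R_{n+1}=R_n$ held for some $n$, applying $A$ and cancelling would push the equality down to $R_1=R_0$. With $R_{n+1}\subsetneq R_n$ for every $n$, the quaternionic Riesz lemma produces unit vectors $x_n\in R_n$ with $\dist(x_n,R_{n+1})\geq \tfrac12$. For $m>n$ the vector $Ax_n+x_m-Ax_m$ lies in $R_{n+1}$, so
$$
\|Kx_n-Kx_m\|=\bigl\|x_n-(Ax_n+x_m-Ax_m)\bigr\|\geq \dist(x_n,R_{n+1})\geq\tfrac12,
$$
meaning the bounded sequence $(Kx_n)$ has no convergent subsequence, contradicting compactness of $K$. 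Therefore $R_1=\mathcal{H}$, so $A$ is surjective and, a fortiori, has dense range. Then Theorem~\ref{Th:3.1} gives that $A=\mathcal{I}-K$ is invertible, hence so is $|s|^2 A=T^2-2\Re(s)T+|s|^2\mathcal{I}$.

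I expect the surjectivity step to be the main obstacle, not conceptually but because one must confirm that the ingredients of Riesz theory transfer to right quaternionic Hilbert spaces: that sums and products of compact right-linear operators are compact, that a bounded-below operator has closed range, and that the Riesz almost-orthogonality lemma holds for closed right-linear subspaces. Each follows as in the complex case, the arguments being purely metric and using only right linearity, but they must be verified before the chain argument can be carried out.
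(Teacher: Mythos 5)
Your proof is correct, and it takes a genuinely different route from the paper's. Both arguments start from the same structural observation---because $\Re(s)$ and $|s|^2$ are real, the operator $A_s(T):=T^2-2\Re(s)T+|s|^2\mathcal{I}$ is a nonzero real multiple of the identity plus a compact operator (you make this explicit by factoring out $|s|^2$; the paper just notes that $T^2-2\Re(s)T$ is compact)---but they diverge from there. You run classical Riesz theory on the operator itself: injectivity plus compactness gives boundedness from below, and then the strictly decreasing chain of closed ranges $\ran(A^n)$, combined with the Riesz almost-orthogonality lemma, forces surjectivity; no adjoints appear anywhere. The paper instead proves in its Step 1 the \emph{converse} implication (surjectivity implies injectivity), via the strictly increasing chain of kernels $\ker(A_s(T)^n)$, an orthonormal sequence in those kernels, and weak convergence against compactness; its Step 2 gives boundedness from below on $\ker(A_s(T))^\perp$; and its Step 3 transfers everything through the adjoint: $A_s(T)^*=A_s(T^*)$ has the same form since $T^*$ is compact, triviality of $\ker(A_s(T))$ makes $\ran(A_s(T)^*)$ dense, closedness of the range upgrades this to all of $\mathcal{H}$, Steps 1--2 then apply to $A_s(T)^*$, and Theorem~\ref{Th:3.1} inverts $A_s(T)^*$ and hence $A_s(T)$. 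What your route buys is self-containedness in the quaternionic setting: every ingredient you must verify (compactness under sums and products, closed range of bounded-below operators, the Riesz lemma or orthogonal projection onto closed right-subspaces) is purely metric plus right linearity, and you never need the adjoint-range relation $\overline{\ran(B^*)}=(\ker B)^\perp$ for quaternionic operators. What the paper's route buys is that it proves both implications of the Fredholm alternative along the way (its Step 1 is precisely the statement it then reuses on the adjoint), staying closer to the Hilbert-space duality structure exploited elsewhere in the paper.
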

\begin{proof}
We divide the proof in 3 steps.
\\
Step 1. We set $A_s(T)= T^2 -2{\rm Re}(s) T +|s|^2 \mathcal I$, and we prove that if ${\rm ran}(A_s(T))=\mathcal H$ then ${\rm ker}( T^2 -2{\rm Re}(s) T +|s|^2 \mathcal I)=\{0\}$.\\
First of all we note that when $T$ is compact, also $T^2-2 s_0T$ is compact.\\
Then we define $\mathcal{Q}_{n,s}:={\rm ker}( T^2 -2{\rm Re}(s) T +|s|^2 \mathcal I)^n$, $n=1,2,\ldots$ and,
  by absurd, we assume that $\mathcal{Q}_{1,s}\not=\{0\}$ and $0\not=v_1\in\mathcal{Q}_{1,s}$: since ${\rm ran}(A_s(T))=\mathcal H$, we can find $v_2$ such that $A_s(T) v_2=v_1$ and that $A_s(T)^2 v_2=A_s(T)v_1=0$, i.e.  $v_2\in \mathcal{Q}_{2,s}$. Iterating the procedure, we can find $v_{n+1}\in\mathcal{Q}_{n+1,s}$ such that $A_s(T) v_{n+1}=v_n$, $n=1,2,\ldots$. In conclusion, $v_n\in \mathcal{Q}_{n,s}$ for all $n=1,2,\ldots$ and the smallest power that annihilates $v_n$ is the $n$-th power. Thus $\mathcal{Q}_{n,s}\subset \mathcal{Q}_{n+1,s}$ and the sequence $\{\mathcal{Q}_{n,s}\}$ is strictly increasing. We can form a sequence $\epsilon_1, \epsilon_2, \ldots $ such that $\epsilon_n\in \mathcal{Q}_{n,s}(T)$ for all $n$, and the elements of the sequence are orthonormal. Since $A_s(T) \epsilon_{n+1}\in \mathcal Q_{n,s}$, $A_s(T)\epsilon_{n+1}$ is orthogonal to $\epsilon_{n+1}$, so that
$$
\| (T^2 -2 s_0 T) \epsilon_{n+1}\|^2=
\| A_s(T) \epsilon_{n+1}- |s|^2 \epsilon_{n+1}\|^2=\|A_s(T) \epsilon_{n+1}\|^2+|s|^2 \|\epsilon_{n+1}\|^2 \geq |s|^2\not=0.
$$
Since $\epsilon_n\to 0$ weakly, this contradicts the fact that $T^2-2 s_0T$ is compact.
\\
Step 2.
We prove that $A_s(T)$ is bounded from below on ${\rm ker}(A_s(T))^\perp$.\\
By absurd, we assume that the assertion does not hold, and so there exist unit vectors $\epsilon_n\in {\rm ker}(A_s(T))^\perp$ such that $A_s(T) \epsilon_n\to 0$.
\\
Since $T^2-2s_0T$ is compact we can assume, with no loss of generality, that the sequence $(T^2-2s_0T)\epsilon_n$ is (strongly) convergent to an element $\epsilon$. Thus we have
$$
|s|^2\epsilon_n=A_s(T)\epsilon_n -(T^2-2s_0T)\epsilon_n\to -\epsilon
$$
so that $\epsilon\in {\rm ker}(A_s(T))^\perp$ and it is a unit vector. However, $A_s(T) \epsilon_n=A_s(T) \epsilon$ so that $A_s(T) \epsilon=0$. This fact yields $\epsilon\in {\rm ker}(A_s(T))$ hence $\epsilon=0$. This contradicts the fact that $\epsilon$ has norm $1$ and the assertion follows.
\\
Step 3. We show that Step 1 and Step 2 apply when we consider $T^*$, $A_s(T)^*$ instead of $T$, $A_s(T)$.\\
We note that if Step 1 and 2 hold, then since ${\rm ran}(A_s(T))=A_s(T)\Big({\rm ker}(A_s(T))^\perp\Big)=\mathcal H$ and $A_s(T)$ is bounded from below then ${\rm ran}(A_s(T))$ is closed and so also ${\rm ran}(A_s(T)^*$ is closed.
Assume that ${\rm ker}(A_s(T))=\{0\}$. Then ${\rm ran}(A_s(T))^*$ is dense in $\mathcal H$ and since it is closed, it follows that ${\rm ran}(A_s(T)^*)=\mathcal H$ and so Step 1 applies to $(A_s(T)^*$. We then conclude that ${\rm ker}(A_s(T)^*)=\{0\}$ and also that ${\rm ker}(A_s(T)^*)^\perp=\mathcal H$. Then Step 2 can be applied to $A_s(T)^*$ leading to the conclusion that $A_s(T)^*$ is bounded from below. Theorem \ref{Th:3.1} yields that $A_s(T)^*$ is invertible and so also $A_s(T)$ is invertible and this concludes the proof.
\end{proof}
\begin{theorem}\label{Th:3.3}
Let $T\in\mathcal{B}(\mathcal H)$. Then $\sigma_S(T)=\Pi_{S}(T)\bigcup \Gamma_{S}(T)$.
\end{theorem}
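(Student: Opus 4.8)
The plan is to reduce the statement to the invertibility criterion of Theorem \ref{Th:3.1} by passing to complements. Writing $A_s(T) := T^2 - 2\Re(s)T + |s|^2\mathcal{I}$ as in the proof of Theorem \ref{Fredholm}, I first observe that, since $2\Re(s)$ and $|s|^2$ are \emph{real} scalars, $A_s(T)$ is a genuine bounded right-linear operator on $\mathcal H$; this is precisely what lets the Hilbert-space arguments behind Theorem \ref{Th:3.1} go through unchanged despite the non-commutativity of $\mathbb{H}$. By definition $s\in\sigma_S(T)$ exactly when $A_s(T)$ fails to be invertible, $s\in\Pi_S(T)$ exactly when $A_s(T)$ fails to be bounded from below, and $s\in\Gamma_S(T)$ exactly when the range of $A_s(T)$ fails to be dense. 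Hence it suffices to show, for each fixed $s\in\mathbb{H}$, that $A_s(T)$ is invertible if and only if it is simultaneously bounded from below and has dense range; taking complements in $\mathbb{H}$ then yields $\sigma_S(T)=\Pi_S(T)\cup\Gamma_S(T)$.

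For the forward implication I would argue directly: if $A_s(T)$ is invertible, then its inverse lies in $\mathcal{B}(\mathcal H)$, so for every $v\in\mathcal H$ one has $\|v\|=\|A_s(T)^{-1}A_s(T)v\|\le\|A_s(T)^{-1}\|\,\|A_s(T)v\|$, which gives the lower bound $\|A_s(T)v\|\ge\|A_s(T)^{-1}\|^{-1}\|v\|$ and shows that $A_s(T)$ is bounded from below; moreover invertibility forces surjectivity, so the range is all of $\mathcal H$ and in particular dense. Thus $s\notin\Pi_S(T)\cup\Gamma_S(T)$. The converse implication is precisely the content of Theorem \ref{Th:3.1}: if $A_s(T)$ is bounded from below and its range is dense, then $A_s(T)$ is invertible, so $s\notin\sigma_S(T)$.

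I do not expect a genuine obstacle here, since the result is essentially a corollary of Theorem \ref{Th:3.1} together with the elementary fact that a boundedly invertible operator is automatically bounded from below and surjective. The only point worth stating explicitly is that the reduction to classical Hilbert-space reasoning is legitimate because $A_s(T)$ has real coefficients and is therefore right-linear; this is exactly the feature that the non-commutative setting tends to destroy for the $S$-resolvent operators, but which survives for the pseudo-resolvent. As a consistency check one may also note that the inclusion $\Pi_{0,S}(T)\subset\Pi_S(T)$, combined with the identity just proved, correctly places the point $S$-spectrum inside $\sigma_S(T)$.
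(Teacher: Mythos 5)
Your proposal is correct and takes essentially the same route as the paper: the nontrivial inclusion $\sigma_S(T)\subset\Pi_{S}(T)\cup\Gamma_{S}(T)$ is obtained from the invertibility criterion of Theorem \ref{Th:3.1} (bounded from below together with dense range implies invertible), applied to the pseudo-resolvent kernel $T^2-2\Re(s)T+|s|^2\mathcal{I}$, which is right-linear because its coefficients are real. The only difference is cosmetic: you spell out the easy reverse inclusion (invertible implies bounded below and surjective), which the paper leaves implicit.
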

\begin{proof}
We have to show that if $s\not\in \Pi_{S}(T)$ and $s\not\in\Gamma_{S}(T)$ then $s\not\in\sigma_S(T)$. But $s\not\in\Pi_{S}(T)$ implies that the range of
 $T^2 - 2\Re(s)T + |s|^2\mathcal{I}$ is closed. Since $s\not\in\Gamma_{S}(T)$ the range of $T^2 - 2\Re(s)T + |s|^2\mathcal{I}$ is dense, so
 we have that $T^2 - 2\Re(s)T + |s|^2\mathcal{I}$ is one-to-one and onto, thus it is invertible.
\end{proof}
\begin{theorem}[Weyl's theorem] \label{Th:Weyl} If $A \in \cB(\cH)$ and $K$ is a compact operator, then $$\sigma_S(A+K) \subset \sigma_S(A) \cup \Pi_{0,S}(A+K).$$
\end{theorem}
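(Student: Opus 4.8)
The plan is to argue by contraposition: I will show that if $s\notin\sigma_S(A)$ and $s\notin\Pi_{0,S}(A+K)$, then $s\notin\sigma_S(A+K)$. Writing $A_s(B):=B^2-2\Re(s)B+|s|^2\id$ for the relevant second order operator, the starting point is the algebraic identity
\[
A_s(A+K)=A_s(A)+C,\qquad C:=AK+KA+K^2-2\Re(s)K .
\]
Since $K$ is compact and the compact operators form a two-sided ideal in $\cB(\cH)$, the operator $C$ is compact. Thus $A_s(A+K)$ is a compact perturbation of $A_s(A)$.

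Assuming $s\notin\sigma_S(A)$, so that $A_s(A)$ is invertible, I can then factor
\[
A_s(A+K)=A_s(A)\bigl(\id+\widetilde K\bigr),\qquad \widetilde K:=A_s(A)^{-1}C,
\]
where $\widetilde K$ is compact (a bounded operator times a compact one). Because $A_s(A)$ is invertible, the kernels of $A_s(A+K)$ and of $\id+\widetilde K$ coincide, and $A_s(A+K)$ is invertible if and only if $\id+\widetilde K$ is invertible. In particular the hypothesis $s\notin\Pi_{0,S}(A+K)$, i.e.\ $\ker A_s(A+K)=\{0\}$, is equivalent to $\ker(\id+\widetilde K)=\{0\}$. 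It then remains to invoke the Fredholm alternative in the form: for a compact operator $\widetilde K$ on $\cH$, if $\id+\widetilde K$ is one-to-one then it is invertible. Granting this, $\id+\widetilde K$ is invertible, hence so is $A_s(A+K)=A_s(A)(\id+\widetilde K)$, which means $s\notin\sigma_S(A+K)$; the contrapositive is exactly the asserted inclusion.

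The main obstacle is precisely this last step, since Theorem~\ref{Fredholm} is stated only for the quadratic operator $T^2-2\Re(s)T+|s|^2\id$ with $T$ compact, and not for a general perturbation $\id+\widetilde K$ of the identity. However, I expect the three-step argument used to prove Theorem~\ref{Fredholm} to carry over essentially verbatim once one replaces the compact operator $T^2-2\Re(s)T$ by $\widetilde K$ and the scalar $|s|^2$ by $1$: a strictly increasing chain of kernels $\ker(\id+\widetilde K)^n$ again produces an orthonormal sequence $\epsilon_n\to0$ weakly with $\|\widetilde K\epsilon_{n+1}\|^2=\|(\id+\widetilde K)\epsilon_{n+1}\|^2+1\geq 1$, contradicting compactness of $\widetilde K$, and the adjoint argument then upgrades injectivity to invertibility via Theorem~\ref{Th:3.1}. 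This supplies the required general Fredholm alternative and closes the argument.

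It is worth noting that this route works uniformly in $s$, including the case $s=0$, because the reduction to $\id+\widetilde K$ never divides by $|s|^2$; the only place $|s|\neq 0$ entered the original statement of Theorem~\ref{Fredholm} is absorbed into the invertibility of $A_s(A)$, which is guaranteed here by the assumption $s\notin\sigma_S(A)$.
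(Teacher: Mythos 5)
Your proposal is correct and is essentially the paper's own proof read contrapositively: the same compact perturbation $C=AK+KA+K^2-2\Re(s)K$, the same factorization $A_s(A+K)=A_s(A)\bigl(\id+\widetilde{K}\bigr)$ with $\widetilde{K}=A_s(A)^{-1}C$ compact, and the same reduction of everything to a Fredholm alternative for $\id+\widetilde{K}$ (the paper argues by contradiction, concluding that $-1$ is an eigenvalue of $\widetilde{K}$, which is the mirror image of your contrapositive). The gap you flag---that Theorem~\ref{Fredholm} is stated only for operators of the form $T^2-2\Re(s)T+|s|^2\id$ with $T$ compact---is genuine and is glossed over by the paper itself, but instead of re-running the three-step proof you can close it in one line: apply Theorem~\ref{Fredholm} to the compact operator $T=\widetilde{K}$ with $s=-1$, so that $T^2-2\Re(s)T+|s|^2\id=(\id+\widetilde{K})^2$, and note that $\id+\widetilde{K}$ is one-to-one (resp.\ invertible) exactly when $(\id+\widetilde{K})^2$ is.
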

\begin{proof} Assume $s \in \sigma_S(A+K) \setminus \sigma_S(A).$  Then, since $A^2-2 s_0 A+|s|^2\mathcal{I}$ is invertible by assumption and $AK+KA$ is compact since $K$ is compact, we have
\[
\begin{split}
(A+K)^2-2& s_0 (A+K)+|s|^2\mathcal{I}= A^2-2 s_0 A+|s|^2\mathcal{I}+(K^2+AK+KA -2s_0K)\\
&=(A^2-2 s_0 A+|s|^2\mathcal{I})(\mathcal{I}+(A^2-2 s_0 A+|s|^2\mathcal{I})^{-1}(K^2+AK+KA -2s_0K)).
\end{split}
\]
 We conclude, by the invertibility of $A^2-2 s_0 A+|s|^2\mathcal{I}$, that
 $\mathcal{I}+(A^2-2 s_0 A+|s|^2\mathcal{I})^{-1}(K^2+AK+KA -2s_0K)$
 cannot be invertible. It follows from Theorem \ref{Fredholm} that $-1$ is an eigenvalue of the compact operator $(A^2-2 s_0 A+|s|^2\mathcal{I})^{-1}(K^2+AK+KA -2s_0K)$.
 Hence there exists a non-zero $v$ such that $(A^2-2 s_0 A+|s|^2\mathcal{I})^{-1}(K^2+AK+KA -2s_0K)v=-v$ which implies
 $$(K^2+AK+KA -2s_0K)v+(A^2-2 s_0 A+|s|^2\mathcal{I})v=0$$
 i.e.
 $$
 ((A+K)^2-2 s_0 (A+K)+|s|^2\mathcal{I})v=0.
 $$
Thus $ ((A+K)^2-2 s_0 (A+K)+|s|^2\mathcal{I})$ has a non-trivial nullspace and $s \in \Pi_{0,S}(A+K).$
\end{proof}
We also need a result on the boundary of the S-spectrum which will be important for the study of invariant subspaces.
Let $U$ be a subset in $\mathbb{H}$. We define the point-set boundary of $U$ as
$\partial U=\overline{U}\bigcap (\overline{\mathbb{H}\setminus U})$.
\begin{theorem}\label{Th:3.5}
Let $T\in \mathcal{B}(\mathcal H)$. Then we have $\partial \sigma_S(T)\subset \Pi_{S}(T)$.
\end{theorem}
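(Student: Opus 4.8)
The plan is to adapt the classical argument that the topological boundary of the spectrum is contained in the approximate point spectrum, with the pseudo-resolvent $Q_s(T)=A_s(T)^{-1}$ playing the role of the resolvent; here, as in the proof of Theorem~\ref{Fredholm}, I write $A_s(T):=T^2-2\Re(s)T+|s|^2\mathcal{I}$.

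First I would fix $s\in\partial\sigma_S(T)$. Since $\rho_S(T)$ is open, $\sigma_S(T)$ is closed, so from $s\in\overline{\sigma_S(T)}$ we get $s\in\sigma_S(T)$, that is, $A_s(T)$ is not invertible; and from $s\in\overline{\mathbb{H}\setminus\sigma_S(T)}=\overline{\rho_S(T)}$ we obtain a sequence $\{s_n\}\subset\rho_S(T)$ with $s_n\to s$. I would also record that $p\mapsto A_p(T)$ is norm-continuous, since
\[
\|A_s(T)-A_p(T)\|\le 2\,|\Re(s)-\Re(p)|\,\|T\|+\big|\,|s|^2-|p|^2\,\big|,
\]
which tends to $0$ as $p\to s$.

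The core step is to prove that $\|Q_{s_n}(T)\|=\|A_{s_n}(T)^{-1}\|\to\infty$. I would argue by contradiction: if $\|A_{s_n}(T)^{-1}\|\le M$ along some subsequence, then by norm-continuity $\|A_s(T)-A_{s_n}(T)\|<1/M$ for $n$ large, so that $\|A_{s_n}(T)^{-1}(A_s(T)-A_{s_n}(T))\|<1$. Factoring
\[
A_s(T)=A_{s_n}(T)\Big(\mathcal{I}+A_{s_n}(T)^{-1}\big(A_s(T)-A_{s_n}(T)\big)\Big)
\]
and inverting the second factor through a Neumann series shows that $A_s(T)$ is invertible, contradicting $s\in\sigma_S(T)$. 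This perturbation argument is really the engine of the proof, and the only point requiring care is to confirm that this Banach-algebra reasoning carries over verbatim to quaternionic right-linear bounded operators; it does, since $\cB(\cH)$ is a real Banach algebra in which the Neumann series converges whenever the norm of the perturbation is below one.

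Finally, the blow-up of the inverse norm yields approximate eigenvectors. I would choose unit vectors $w_n$ with $\|A_{s_n}(T)^{-1}w_n\|\to\infty$ and set $v_n:=A_{s_n}(T)^{-1}w_n\,/\,\|A_{s_n}(T)^{-1}w_n\|$, so that $\|v_n\|=1$ and $\|A_{s_n}(T)v_n\|=1/\|A_{s_n}(T)^{-1}w_n\|\to 0$. Then
\[
A_s(T)v_n=A_{s_n}(T)v_n+\big(A_s(T)-A_{s_n}(T)\big)v_n\to 0,
\]
the first summand vanishing by construction and the second by norm-continuity together with $\|v_n\|=1$. Hence $A_s(T)=T^2-2\Re(s)T+|s|^2\mathcal{I}$ is not bounded from below, which is exactly the statement $s\in\Pi_S(T)$. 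The genuinely non-trivial ingredient remains the perturbation step above; the extraction of approximate eigenvectors is then routine.
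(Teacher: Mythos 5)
Your proof is correct, but it takes a genuinely different route from the paper's. Writing, as you do, $A_s(T):=T^2-2\Re(s)T+|s|^2\mathcal{I}$, the paper argues by contradiction through the splitting of the $S$-spectrum: assuming $s\in\partial\sigma_S(T)$ and $s\notin\Pi_S(T)$, it first shows that the operators $A_{s_n}(T)$ admit a uniform lower bound $\|A_{s_n}(T)v\|\geq K\|v\|$ for $n$ large (otherwise a sequence of unit vectors would, by the same norm-continuity estimate you use, witness $s\in\Pi_S(T)$), and then exploits that bound to solve $A_{s_n}(T)w_n=v$ with $\|w_n\|\leq\|v\|/K$ and show that the range of $A_s(T)$ is dense; hence $s\notin\Gamma_S(T)$, and Theorem~\ref{Th:3.3} (the decomposition $\sigma_S(T)=\Pi_S(T)\cup\Gamma_S(T)$) forces $s\notin\sigma_S(T)$, a contradiction. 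You instead prove directly that $\|Q_{s_n}(T)\|\to\infty$ via the Neumann-series factorization and then manufacture approximate eigenvectors, never touching the compression spectrum or Theorem~\ref{Th:3.3}. Your argument is the quaternionic transcription of the classical resolvent blow-up proof; it is more self-contained (it needs only that $\mathcal{B}(\mathcal{H})$ is a real Banach algebra, the openness of $\rho_S(T)$, and the norm-continuity of $s\mapsto A_s(T)$, all of which hold precisely because $A_s(T)$ has real coefficients), and it yields as a by-product the quantitative divergence of the pseudo-resolvent norm at boundary points of the $S$-spectrum. The paper's route, by contrast, reuses and showcases the machinery it has just built (Theorems~\ref{Th:3.1} and~\ref{Th:3.3}) and makes visible the role of the compression spectrum; both proofs ultimately rest on the same key fact, namely that the quaternionic subtleties are absent here since the pseudo-resolvent is a polynomial in $T$ with real coefficients. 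Two details worth making explicit in your write-up: choose the unit vectors $w_n$ with $\|A_{s_n}(T)^{-1}w_n\|\geq\frac{1}{2}\|A_{s_n}(T)^{-1}\|$ so that the blow-up actually passes to the chosen vectors, and note that the closedness of $\sigma_S(T)$, which you invoke to place $s$ inside $\sigma_S(T)$, is itself a consequence of the same Neumann-series perturbation applied at points of $\rho_S(T)$, so there is no circularity.
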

\begin{proof}
First observe that $\partial \sigma_S(T)= \sigma_S(T)\bigcap \overline{\rho_S(T)}$.
Now assume that $s\in \partial \sigma_S(T)$ and $s\not\in \Pi_{S}(T)$.
We can take a sequence $s_n$ in the set $\rho_S(T)$ such that $s_n\to s$.

Step 1. We show that there exists $K>0$ and a positive integer $N$ such that $n\geq N$ implies
$$
\|(T^2 - 2\Re(s_n)T + |s_n|^2\mathcal{I})v\|\geq K\|v\|, \mbox{ for all }v\in \mathcal H.
$$
Suppose this does not hold. Then for all positive integers $m$ and $N$ there would exists an $n\geq N$
  and  a vector $v_m$ with $\|v_m\|=1$ such that
$$
\|(T^2 - 2\Re(s_n)T + |s_n|^2\mathcal{I})v_m\|\leq 1/m.
$$
But observe that
\[
\begin{split}
&(T^2 - 2\Re(s)T + |s|^2\mathcal{I})v_m
\\
&
=(T^2 - 2\Re(s_n)T + |s_n|^2\mathcal{I}  )v_m  +2(\Re(s_n)-\Re(s))Tv_m +(|s|^2- |s_n|^2)v_m.
\end{split}
\]
Taking the norm we have
\[
\begin{split}
\|(T^2& - 2\Re(s)T + |s|^2\mathcal{I})v_m\|
\\
&
\leq \|(T^2 - 2\Re(s_n)T + |s_n|^2\mathcal{I}  )v_m \| +2|\Re(s_n)-\Re(s)|\|T\|\|v_m\| +||s|^2- |s_n|^2|\|v_m\|
\\
&
\leq
\frac{1}{m}+2|\Re(s_n)-\Re(s)|\|T\| +||s|^2- |s_n|^2|
\end{split}
\]
and this implies that $s\in \Pi_{S}(T)$. So such $K$ and $N$ do not exist.

Step 2. We can show now that $s\not\in \Gamma_S(T)$. For if $v\in \mathcal H$, then for each $n$ there is a $w_n$ with
$$
(T^2 - 2\Re(s_n)T + |s_n|^2\mathcal{I} )w_n=v
$$
but since
$$
\|(T^2 - 2\Re(s_n)T + |s_n|^2\mathcal{I}  )w_n\|\geq K\|w_n\|
$$
we have
$$
\|w_n\|\leq \frac{1}{K}\|v\|, \ \ for \ \ n\geq N.
$$
Now observe that
\[
\begin{split}
\|(T^2& - 2\Re(s)T + |s|^2\mathcal{I} )w_n-v\|
\\
&
=\|(T^2 - 2\Re(s_n)T + |s_n|^2\mathcal{I}  - 2\Re(s)T + |s|^2\mathcal{I}  -(- 2\Re(s_n)T + |s_n|^2\mathcal{I} ))w_n-v\|
\\
&
\leq
\|(T^2 - 2\Re(s_n)T + |s_n|^2\mathcal{I})w_n -v\|
+2|\Re(s_n)- \Re(s)| \|T\|\|w_n\|  + ||s|^2 - |s_n|^2 |\|w_n\|
\\
&
\leq 0+2|\Re(s_n)- \Re(s)| \|T\|\|w_n\|  + ||s|^2 - |s_n|^2 |\|w_n\|
\\
&
\leq (2|\Re(s_n)- \Re(s)| \|T\|  + ||s|^2 - |s_n|^2 |)\frac{1}{K}\|v\|.
\end{split}
\]
If $n$ is large the term $(2|\Re(s_n)- \Re(s)| \|T\|  + ||s|^2 - |s_n|^2 |)\frac{1}{K}\|v\|$ is arbitrary small and it follows that
$s\not\in  \Gamma_{S}(T)$. Hence $s\in \partial \sigma_{S}(T)$ and $s\not\in  \Gamma_{S}(T)$ this implies that $s\not\in \sigma_S(T)$. But this contradicts Theorem \ref{Th:3.3}.
\end{proof}

The above theorem gives information about the spectra of the restrictions to invariant subspaces.
\begin{definition}\label{Def:3.6}
Let $T\in \mathcal{B}(\mathcal H)$ the full $S$-spectrum of $T$, denoted by $\eta(\sigma_S(T))$, is the union of $\sigma_S(T)$ and all bounded components of $\rho_{S}(T)$.
\end{definition}

This means that $\eta(\sigma_S(T))$ is the $S$-spectrum together with the holes in $\sigma_S(T)$.
\\
We now recall some basic facts useful to define the $S$-functional calculus.
\begin{definition}[$T$-admissible slice domain] \label{Def:3.10}
Let $T\in\boundOP(\mathcal H)$. A bounded axially symmetric domain $U\subset\mathbb{H}$ is called {\em $T$-admissible}
if $\sigma_S(T)\subset U$ and $\partial(U\cap\mathbb{C}_I)$ is the union of a finite number of piecewise continuously differentiable Jordan curves for any $I\in\mathbb{S}$.
\end{definition}

\begin{definition} \label{Def:3.11}
Let $T\in \mathcal{B}(\mathcal H)$.
\begin{enumerate}
\item[(i)] A function $f$ is called {\em locally left slice hyperholomorphic} on $\sigma_S(T)$, if there exists a $T$-admissible slice domain $U\subset\mathbb{H}$ such that $f\in \mathcal{SH}_L(\overline{U})$. We denote the set of all locally left slice hyperholomorphic functions on $\sigma_S(T)$ by $\mathcal{SH}_L(\sigma_S(T))$.
\item[(ii)] A function $f$ is called {\em locally right slice hyperholomorphic} on $\sigma_S(T)$, if there exists a  $T$-admissible slice domain $U\subset\mathbb{H}$ such that $f\in\mathcal{SH}_R(\overline{U})$. We denote the set of all locally left slice hyperholomorphic functions on $\sigma_S(T)$ by $\mathcal{SH}_R(\sigma_S(T))$.
\item[(iii)]  By $\mathcal{N}(\sigma_S(T))$ we denote the set of all functions $f\in\mathcal{SH}_L(\sigma_S(T))$ such that there exists a  $T$-admissible slice domain $U$ with $f(U\cap\mathbb{C}_\BI)\subset\mathbb{C}_\BI$ for all $I\in\mathbb{S}$.
\end{enumerate}
\end{definition}

\begin{definition}[S-functional calculus]\label{SCalc}
Let $T\in\mathcal{B}(\mathcal H)$. For any $f\in\mathcal{SH}_L(\sigma_S(T))$, we define
\begin{equation}\label{SCalcL}
f(T) = \frac{1}{2\pi}\int_{\partial(U\cap\mathbb{C}_\BI)}S_L^{-1}(s,T)\,ds_\BI\,f(s),
\end{equation}
where $\BI$ is an arbitrary imaginary unit and  $U$ is an arbitrary {$T$-admissible slice domain} such that $f$ is left slice hyperholomorphic on $\overline{U}$.
For any $f\in\mathcal{SH}_R(\sigma_S(T))$, we define
\begin{equation}\label{SCalcR}
f(T) = \frac{1}{2\pi}\int_{\partial(U\cap\mathbb{C}_\BI)}f(s)\,ds_\BI\,S_R^{-1}(s,T),
\end{equation}
where $\BI$ is an arbitrary imaginary unit and  $U$ is an arbitrary $T$-admissible slice domain such that $f$ is right slice hyperholomorphic on $\overline{U}$.
\end{definition}
For more details, and for the Banach space setting, see~\cite{acgs},~\cite{acsbook}, and~\cite{CSS}.

\section{Quaternionic operators on a Hilbert space}

Let $\cH$ be a right Hilbert (separable) space over $\BH.$  Moreover, we assume that $\cB(\cH)$ has identity $\mathcal{I}.$ For the time being, we often omit the identity operator from the formulations, its presence being clear from context. Let us first define the notion of an invariant subspace.

 \begin{definition}\label{Def:4.1}
The subspace $\cM \subset \cH$ is invariant under the operator $T\in \cB(\cH) $ if $T x \in \cM$ for every $x \in \cM.$ The collection of all subspaces of $\cH$ invariant under $T$ is denoted as $Lat(T);$ if $\cB' \subset \cB(\cH)$ then $Lat(\cB') := \cap_{T \in \cB'} Lat(T).$
 \end{definition}

Here, we have immediately the following property.

\begin{theorem} \label{Th:4.2}
 Let $M\in Lat(T)$. Then $\sigma_S(T|_{M})\subset\eta(\sigma_S(T))$, where $\eta(\sigma_S(T))$ is the full $S$-spectrum of $T$.
\end{theorem}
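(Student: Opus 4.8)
The complement of the full $S$-spectrum $\eta(\sigma_S(T))$ in $\mathbb{H}$ is, by Definition \ref{Def:3.6}, exactly the unbounded connected component $\Omega$ of the $S$-resolvent set $\rho_S(T)$. Hence the assertion $\sigma_S(T|_{M})\subseteq\eta(\sigma_S(T))$ is equivalent to $\Omega\subseteq\rho_S(T|_{M})$, and the plan is to prove the latter by showing that the pseudo-resolvent $Q_s(T)=A_s(T)^{-1}$, where $A_s(T):=T^2-2\Re(s)T+|s|^2\mathcal{I}$, leaves $M$ invariant for every $s\in\Omega$; its restriction to $M$ will then invert $A_s(T|_M)$. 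The structural point that makes this work is that the coefficients $2\Re(s)$ and $|s|^2$ are \emph{real}, so $A_s(T)$ maps the closed right-linear subspace $M$ into itself and $A_s(T)|_M=A_s(T|_M)$. This is precisely why I would use the pseudo-resolvent rather than $S_L^{-1}(s,T)$ or $S_R^{-1}(s,T)$: the latter involve left multiplication by the non-real quaternion $\overline{s}$, which need not preserve the right submodule $M$.

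First I would treat the region of large $|s|$. Since $\|T^2-2\Re(s)T\|\le\|T\|^2+2|s|\,\|T\|<|s|^2$ once $|s|$ is large enough, say $|s|>R$, the Neumann expansion
\[
Q_s(T)=|s|^{-2}\sum_{n=0}^{\infty}(-1)^n|s|^{-2n}\bigl(T^2-2\Re(s)T\bigr)^n
\]
converges in operator norm; in particular $\{|s|>R\}\subseteq\rho_S(T)$, and this exterior region is connected and unbounded, hence contained in $\Omega$. Each term of the series maps $M$ into $M$ because $T$ does and the coefficients are real, and since $M$ is closed we obtain $Q_s(T)M\subseteq M$ for all $s$ with $|s|>R$.

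Next I would propagate this invariance to all of $\Omega$. Parametrizing by the two real quantities $(a,b)=(2\Re(s),|s|^2)$, the map $(a,b)\mapsto A_s(T)$ is affine, so $(a,b)\mapsto Q_s(T)$ is real-analytic on the open set where $A_s(T)$ is invertible; composing with the polynomial map $s\mapsto(2\Re(s),|s|^2)$ shows that $s\mapsto Q_s(T)$ is real-analytic in the real coordinates of $s$ on $\rho_S(T)$. For fixed $x\in M$ and $z\in M^{\perp}$ I would set $g_{x,z}(s):=\langle Q_s(T)x,z\rangle$; each of its four real components is real-analytic on the connected open set $\Omega\subseteq\mathbb{R}^4$ and, by the previous step, vanishes on the nonempty open subset $\{|s|>R\}$. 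The identity theorem for real-analytic functions then forces $g_{x,z}\equiv0$ on $\Omega$, and since $x\in M$, $z\in M^{\perp}$ are arbitrary and $M$ is closed, this yields $Q_s(T)M\subseteq M$ for every $s\in\Omega$. Consequently $Q_s(T)|_M$ is a bounded right-linear operator on $M$, and the identities $A_s(T)Q_s(T)=Q_s(T)A_s(T)=\mathcal{I}$ together with $A_s(T)|_M=A_s(T|_M)$ make it a two-sided inverse of $A_s(T|_M)$; hence $s\in\rho_S(T|_M)$ and $\Omega\subseteq\rho_S(T|_M)$.

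The main obstacle is the continuation step: the natural object for an identity-theorem argument would be the genuinely slice hyperholomorphic $S$-resolvent, but as noted it fails to preserve $M$, so the slice-hyperholomorphic machinery is unavailable here. One must instead recognize that the pseudo-resolvent, despite not being hyperholomorphic, depends real-analytically on the real coordinates of $s$, and then invoke the real-analytic identity theorem on the connected component $\Omega$. The remaining verifications (convergence of the Neumann series, closedness of $M$ passing to the limit, and the two-sided inverse computation) are routine.
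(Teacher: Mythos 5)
Your proof is correct, but it takes a genuinely different route from the paper's. The paper argues spectrally: it first observes that $\Pi_S(T|_{M})\subset\Pi_S(T)$ (a sequence of unit vectors in $M$ along which $T^2-2\Re(s)T+|s|^2\mathcal{I}$ tends to zero works just as well in $\mathcal H$), then invokes Theorem \ref{Th:3.5} to get $\partial\sigma_S(T|_{M})\subset\Pi_S(T|_{M})\subset\sigma_S(T)$, and concludes with a connectedness argument: if the compact set $\sigma_S(T|_{M})$ met the unbounded component $\Omega$ of $\rho_S(T)$, then $\Omega$, being connected and unbounded, would have to meet $\partial\sigma_S(T|_{M})$ as well, which the displayed inclusion forbids. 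You instead prove the stronger, constructive statement that the pseudo-resolvent $Q_s(T)$ leaves $M$ invariant for every $s\in\Omega$ --- first for large $|s|$ via the Neumann series, then on all of $\Omega$ by the identity theorem for real-analytic functions applied to $s\mapsto\langle Q_s(T)x,z\rangle$ with $x\in M$, $z\in M^{\perp}$ --- and then exhibit $Q_s(T)|_M$ as an explicit two-sided inverse of $A_s(T|_M)$, where $A_s(T):=T^2-2\Re(s)T+|s|^2\mathcal{I}$. Both arguments are sound. The paper's proof is shorter because it reuses the boundary theorem already established in Section 2; yours is self-contained, identifies the resolvent of the restriction explicitly, and your key structural observation --- that one must work with $Q_s(T)$, whose coefficients are real and hence respect right-linear invariant subspaces, rather than with the $S$-resolvents, which involve $\overline{s}\mathcal{I}$ and do not, with real-analytic continuation replacing the unavailable slice-hyperholomorphic continuation --- is precisely the device the paper itself uses later when it introduces the set $\sM$ in the proof of Theorem \ref{Th:6.1}. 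One small point to make explicit: you should say that $M$ is closed (as is implicit in Definition \ref{Def:4.1} and needed for $\sigma_S(T|_M)$ to be meaningful), since both the limiting step in the Neumann-series argument and the identity $(M^{\perp})^{\perp}=M$ rely on it.
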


\begin{proof} Observe that $s\in \Pi_S(T|_{M})$ implies that there exists a sequence $v_n$, with $\|v_n\|=1$, such that
$(T^2 - 2\Re(s)T + |s|^2\mathcal{I})v_n $ converges to zero, so $\Pi_S(T|_{M}) \subset\Pi_S(T)$. By Theorem \ref{Th:3.5} we also have that
$$
\partial\sigma_S(T|_{M}) \subset\Pi_S(T|_{M}) \subset\sigma_S(T).
$$
If $\sigma_S(T|_{M}) $ contained points of the unbounded component of $\rho_S(T)$, then $
\partial\sigma_S(T|_{M}) $ would have to meet the unbounded component of $\rho_S(T)$ too, and the above shows that this is impossible.
\end{proof}

Additionally, we have the following notion of hyperinvariant subspaces.

  \begin{definition}\label{Def:1.02a}
The subspace $\cM$ is hyperinvariant under the operator $T$ if $\cM \in Lat(B)$ for every $B$ which commutes with $T.$
 \end{definition}

Hyperinvariant subspaces of $T$ give information about the commutants of $T,$ that is, the set of all operators $B$ such that $[T,B]=TB-BT=0.$

For the $S$-spectrum we also need a statement about the spectral radius, see \cite{CSS}.

\begin{definition} \label{Def:4.5}
Let $T\in\mathcal{B}(\mathcal H)$. Then the S-spectral radius of $T$ is defined to be the nonnegative real number
$$r_S(T) = \sup\{|s|:s\in\sigma_S(T)\}.$$
\end{definition}

\begin{theorem} \label{Th:4.6}
For $T\in\mathcal{B}(\mathcal H)$, we have
$$ r_S(T) = \lim_{n\to\infty}\|T^n\|^{\frac{1}{n}}.$$
\end{theorem}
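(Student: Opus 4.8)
The plan is to establish the two inequalities $r_S(T)\le \ell$ and $\ell\le r_S(T)$, where $\ell:=\lim_{n\to\infty}\|T^n\|^{1/n}$, after first observing that this limit exists. Existence is the classical argument: submultiplicativity $\|T^{m+n}\|\le\|T^m\|\,\|T^n\|$ makes $n\mapsto\log\|T^n\|$ subadditive, so by Fekete's lemma $\lim_n\|T^n\|^{1/n}$ exists and equals $\inf_n\|T^n\|^{1/n}$; in particular it coincides with $\limsup_n\|T^n\|^{1/n}$. It therefore suffices to prove $r_S(T)=\ell$.

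For the inclusion $\sigma_S(T)\subseteq\{s:|s|\le\ell\}$ (equivalently $r_S(T)\le\ell$) I would construct the inverse of $A_s(T):=T^2-2\Re(s)T+|s|^2\mathcal{I}$ directly as a power series in $T$ whenever $|s|>\ell$. The device that circumvents non-commutativity is to use \emph{scalar} coefficients: setting $d_n(s):=\frac{\bar s^{-1-n}-s^{-1-n}}{s-\bar s}$ for non-real $s$ (and the limiting value $(n+1)s^{-n-2}$ for real $s$), each $d_n(s)$ is a real number, since $s$ and $\bar s$ lie in the commutative plane $\mathbb{C}_\BI$ with $\BI=\Im(s)/|\Im(s)|$, and the partial-fraction identity $\sum_{n\ge0}d_n(s)x^n=(x^2-2\Re(s)x+|s|^2)^{-1}$ holds as a scalar power series. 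Writing $w=s^{-1}$ gives the closed form $d_n(s)=|w|^2\sum_{j=0}^{n}w^{j}\bar w^{\,n-j}$ and hence the sharp bound $|d_n(s)|\le(n+1)|s|^{-n-2}$. Consequently $\sum_n\|d_n(s)T^n\|\le\sum_n(n+1)|s|^{-n-2}\|T^n\|$ converges precisely when $|s|>\ell$, so $B_s:=\sum_{n\ge0}d_n(s)T^n$ converges in $\mathcal{B}(\mathcal{H})$. Because the $d_n(s)$ are real, $B_s$ commutes with every power of $T$ and with $A_s(T)$, and by rearrangement the scalar identity upgrades termwise to the operator identity $A_s(T)B_s=B_sA_s(T)=\mathcal{I}$. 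Thus $A_s(T)$ is invertible and $s\in\rho_S(T)$, giving $r_S(T)\le\ell$.

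For the reverse inequality $\ell\le r_S(T)$ I would use the $S$-functional calculus of Definition \ref{SCalc} applied to the intrinsic function $f(s)=s^n\in\mathcal{N}(\sigma_S(T))$, for which $f(T)=T^n$. Fix any $\rho>r_S(T)$ and take the $T$-admissible domain $U=\{|s|<\rho\}$, whose slice boundary $\partial(U\cap\mathbb{C}_\BI)$ is a circle of radius $\rho$ contained in $\rho_S(T)$. Then $T^n=\frac{1}{2\pi}\int_{\partial(U\cap\mathbb{C}_\BI)}S_L^{-1}(s,T)\,ds_\BI\,s^n$, and since $S_L^{-1}(\cdot,T)$ is continuous, hence bounded by some $M_\rho$ on the compact circle $\{|s|=\rho\}\cap\mathbb{C}_\BI$, the standard length-times-supremum estimate yields $\|T^n\|\le\frac{1}{2\pi}(2\pi\rho)M_\rho\rho^n=M_\rho\,\rho^{\,n+1}$. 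Taking $n$-th roots and letting $n\to\infty$ gives $\ell=\lim_n\|T^n\|^{1/n}\le\rho$, and letting $\rho\downarrow r_S(T)$ gives $\ell\le r_S(T)$. Combining the two inequalities completes the proof.

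The main obstacle is the first inequality: one cannot factor $A_s(T)$ as $(T-s\mathcal{I})(T-\bar s\mathcal{I})$, because left multiplication by $\bar s$ does not commute with the right-linear operator $T$, so the usual resolvent/Neumann construction is unavailable and a crude Neumann series in $A_s(T)$ produces only the non-sharp radius of order $\|T\|$. The resolution, and the technical heart of the argument, is the expansion of the pseudo-resolvent in powers of $T$ with \emph{real} coefficients $d_n(s)$, which commute with everything and thereby neutralize the non-commutativity, together with the sharp estimate $|d_n(s)|\le(n+1)|s|^{-n-2}$ that pins the radius of convergence to exactly $\ell$. It is worth emphasizing that the reverse inequality genuinely needs the slice hyperholomorphic resolvent $S_L^{-1}(s,T)$ through the functional calculus, since the pseudo-resolvent $Q_s(T)$, being merely continuous and not slice hyperholomorphic, cannot be analytically continued to detect the outermost point of the $S$-spectrum.
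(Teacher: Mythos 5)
Your proposal is correct, but note first that the paper itself contains no proof of Theorem \ref{Th:4.6}: the statement is imported from the literature (``see \cite{CSS}'' just before Definition \ref{Def:4.5}), so the comparison is necessarily against the standard proof in \cite{CSS}/\cite{acsbook} rather than an internal argument. Your second inequality, $\lim_n\|T^n\|^{1/n}\le r_S(T)$, is exactly that standard argument: represent $T^n$ by the $S$-functional calculus on a ball $\{|s|<\rho\}$ with $\rho>r_S(T)$ and use the length-times-supremum estimate on the slice circle. The one ingredient you use without justification is the consistency property $f(T)=T^n$ for $f(s)=s^n$; Definition \ref{SCalc} only \emph{defines} the integral, and the fact that it reproduces powers of $T$ is a theorem (it follows from the expansion $S_L^{-1}(s,T)=\sum_{m\ge 0}T^m s^{-m-1}$ for $|s|>\|T\|$ plus independence of the admissible domain), so it should be cited explicitly. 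Your first inequality is where you genuinely add value as a self-contained argument: instead of appealing to the Laurent expansion of the right slice hyperholomorphic function $S_L^{-1}(\cdot,T)$ on $\{|s|>r_S(T)\}$, you invert $A_s(T)=T^2-2\Re(s)T+|s|^2\mathcal{I}$ directly by the series $B_s=\sum_n d_n(s)T^n$. I checked the key points: $d_n(s)=(\bar s^{-1-n}-s^{-1-n})/(s-\bar s)$ is real (via your closed form $|w|^2\sum_{j=0}^n w^j\bar w^{n-j}$, $w=s^{-1}$, equivalently $d_n(s)=\sum_{j=0}^n s^{-j-1}\bar s^{-(n-j)-1}$), the bound $|d_n(s)|\le (n+1)|s|^{-n-2}$ holds, and since the scalar identity forces the real recursions $|s|^2d_0=1$, $|s|^2d_1-2\Re(s)d_0=0$, $d_{n-2}-2\Re(s)d_{n-1}+|s|^2d_n=0$ ($n\ge2$), absolute convergence for $|s|>\ell$ lets these pass termwise to $A_s(T)B_s=B_sA_s(T)=\mathcal{I}$. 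This real-coefficient series is in fact the power-series expansion of the pseudo-resolvent $Q_s(T)$ itself, so your route is a streamlined, correctly non-commutative substitute for the Neumann series, and your closing remark about why the factorization $(T-s\mathcal{I})(T-\bar s\mathcal{I})$ is unavailable is exactly the right diagnosis. Two small corrections: ``converges precisely when $|s|>\ell$'' should be weakened to ``converges when $|s|>\ell$'' (the boundary case is neither determined nor needed), and the claim that the limit defining $\ell$ exists should be stated before $\ell$ is used in the first inequality, which your Fekete argument does provide.
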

Observe that, for normal operators on a Hilbert space the above theorem means that the spectral radius is given by
$$
 r_S(T) = \|T\|,
$$
since $\|T^n\|=\|T\|^n$, $n\in \mathbb{N}$ as in the complex case.

In the following we are in need of a spectral mapping theorem for the pseudo-resolvent operator $Q_S(T)=(T^2-2\Re(s)T+|s|^2\mathcal{I})^{-1}$.
Unfortunately, as we have already observed in the introduction the function
$s\mapsto (p^2-2\Re(s)p+|s|^2)^{-1}$ defined for $p\not\in [s]$ is neither left nor right slice hyperholomorphic
so we cannot use the $S$-functional calculus to prove the spectral mapping theorem in the case of a quaternionic Banach space.
But since we are working in a Hilbert space and the operator $T$ is normal we can use the continuous functional calculus for normal operators to deduce the spectral mapping theorem that we need.
\\
In the sequel, we also need to define a left multiplication in a right quaternionic Hilbert space. We assume that $\mathcal H$ is separable. This is always possible once that a Hilbert basis $(e_n)_{n\in\mathbb N}$ has been fixed, see \cite{GMP,Viswanath} and it is defined by
$sv=\sum_{n\in\mathbb N} e_n s \langle e_n,v\rangle$
where $v=\sum_{n\in\mathbb N} e_n \langle e_n,v\rangle$.
\begin{theorem} \label{thm:Sept1ub1}
Let $T \in \mathcal{B}(\mathcal{H})$ be normal, and $\mathcal{H}$ is a quaternionic Hilbert space.
Then there exist uniquely determined operators $A$ and $B$ which both belong to $\mathcal{B}(\mathcal{H})$ and an operator $J \in \mathcal{B}(\mathcal{H})$ which is uniquely determined on $\{ {\rm Ker}(T - T^*) \}^{\perp}$ so that the following properties hold{\rm :}
$$
 T = A + J B.
$$
where $A$ is self-adjoint, $B$ is positive, $J$ is anti self-adjoint and unitary, and $A$, $B$ and $J$ mutually commute.
Moreover, for any fixed $\BJ \in \mathbb{S}$, there exists an orthonormal basis $\mathcal{N}_\BJ$ of $\mathcal{H}$ with the property that $J = L_\BJ$, where $ L_\BJ$ is the left multiplication operator by $\BJ\in \mathbb{S}$.
\end{theorem}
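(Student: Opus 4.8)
The plan is to recover the quaternionic analogue of the Cartesian decomposition $T=A+\BJ B$ of a complex normal operator, replacing the scalar imaginary unit by an operatorial one $J$, and then to realize $J$ as a left multiplication. First I would set $A=\tfrac12(T+T^{*})$ and $C=\tfrac12(T-T^{*})$, so that $A^{*}=A$, $C^{*}=-C$ and $T=A+C$; a direct computation shows that normality $T^{*}T=TT^{*}$ is equivalent to $AC=CA$. The operator $-C^{2}=C^{*}C$ is positive, so $B:=(-C^{2})^{1/2}$ is a well-defined positive operator, and by the functional calculus for the self-adjoint operator $-C^{2}$ it commutes with every bounded operator commuting with $C$; in particular $[A,B]=0$, $[B,C]=0$, and $\operatorname{Ker}B=\operatorname{Ker}C=\operatorname{Ker}(T-T^{*})$. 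This already forces $A$ and $B$ to be uniquely determined.

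Next I would produce $J$. On the reducing subspace $\{\operatorname{Ker}C\}^{\perp}=\overline{\ran B}$ the anti-self-adjoint (hence normal) operator $C$ has a polar decomposition $C=JB$ with $J$ the closure of $CB^{-1}$. Normality of $C$ gives $JB=BJ$, and since $A$ commutes with $C$ and $B$ and preserves both $\operatorname{Ker}C$ and $\overline{\ran B}$, it commutes with $J$ on this subspace as well; from $C^{*}=-C$ one reads off $J^{*}=-J$ there, and as $\operatorname{Ker}C=\operatorname{Ker}C^{*}$ the map $J$ is a surjective isometry of $\overline{\ran B}$, hence unitary, so $J^{2}=-\id$ on it. This pins down $J$ uniquely on $\{\operatorname{Ker}(T-T^{*})\}^{\perp}$. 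It remains to extend $J$ across $\operatorname{Ker}C$: there $B=0$, so $C=JB$ and commutation with $B$ hold for any choice, but I must still supply an anti-self-adjoint unitary on $\operatorname{Ker}C$ commuting with the self-adjoint $A|_{\operatorname{Ker}C}$. I would obtain this from the spectral resolution of $A|_{\operatorname{Ker}C}$, defining $J$ fiberwise as an anti-self-adjoint unitary on each spectral subspace (each is again a quaternionic Hilbert space and so carries such an operator), and then glue. I expect this extension to be the main obstacle, because commutation with $A$ on $\operatorname{Ker}C$ cannot be arranged by an arbitrary imaginary operator and forces one to invoke the spectral theory of $A$; gluing the two pieces finally yields $J\in\cB(\cH)$ with $J^{*}=-J$, $J^{2}=-\id$, $[A,J]=[B,J]=0$ and $T=A+JB$.

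For the last assertion, fix $\BJ\in\BS$ and use only $J^{2}=-\id$ and $J^{*}=-J$. I would split $\cH=\cH^{+}_{\BJ}\oplus\cH^{-}_{\BJ}$, where $\cH^{\pm}_{\BJ}=\{v\in\cH:Jv=\pm v\BJ\}$, the associated projections being $v\mapsto\tfrac12\bigl(v\mp(Jv)\BJ\bigr)$. On $\cH^{+}_{\BJ}$ the inner product is $\BC_{\BJ}$-valued: for $v,w\in\cH^{+}_{\BJ}$ the identity $\langle Jv,w\rangle=\langle v,-Jw\rangle$ becomes $-\BJ\langle v,w\rangle=-\langle v,w\rangle\BJ$, so $\langle v,w\rangle$ commutes with $\BJ$. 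Thus $(\cH^{+}_{\BJ},\langle\cdot,\cdot\rangle)$ is a complex Hilbert space; choose a $\BC_{\BJ}$-orthonormal basis $\cN_{\BJ}=(f_{n})$ of it. Picking $\BI\in\BS$ with $\BI\BJ=-\BJ\BI$, right multiplication by $\BI$ carries $\cH^{+}_{\BJ}$ bijectively onto $\cH^{-}_{\BJ}$, and from this together with the explicit projections above one checks that $(f_{n})$ is in fact an orthonormal Hilbert basis of $\cH$ as a right quaternionic space, every $v$ expanding as $v=\sum_{n}f_{n}\langle f_{n},v\rangle$ with $\langle f_{n},v\rangle\in\BH$. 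Since $Jf_{n}=f_{n}\BJ$ for all $n$, I conclude $Jv=\sum_{n}f_{n}\BJ\langle f_{n},v\rangle=L_{\BJ}v$, that is $J=L_{\BJ}$ in the basis $\cN_{\BJ}$. The only delicate points in this stage are the $\BC_{\BJ}$-valuedness of the inner product on $\cH^{+}_{\BJ}$ and the fact that a complex basis of $\cH^{+}_{\BJ}$ is a full quaternionic basis of $\cH$, both of which follow from the explicit $\pm$-projection formulas.
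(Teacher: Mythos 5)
You should first be aware that the paper itself contains no proof of this statement: it is quoted from the literature (the decomposition is attributed to Teichm\"uller \cite{Teichmueller}, and in the form stated it is the known result of \cite{GMP}), so your proposal can only be compared with the standard argument there --- which it essentially reproduces. Most of it is correct: the Cartesian splitting $T=A+C$ with $A=\frac12(T+T^*)$, $C=\frac12(T-T^*)$, the equivalence of normality with $[A,C]=0$, the choice $B=(-C^2)^{1/2}$ with $\ker B=\ker C$, the polar-decomposition construction of $J$ on $(\ker C)^\perp=\overline{\ran B}$ with $J^*=-J$, $J^2=-\id$, the commutation relations, the uniqueness of $A$, $B$ (which deserves the one-line verification that in any admissible decomposition $JB$ is anti-self-adjoint, forcing $A=\frac12(T+T^*)$ and $B^2=C^*C$) and of $J$ on $\overline{\ran B}$, as well as the final identification $J=L_\BJ$ via the splitting into $\cH^{\pm}_{\BJ}$, the $\BC_\BJ$-valuedness of the inner product on $\cH^{+}_{\BJ}$, and the fact that a $\BC_\BJ$-orthonormal basis of $\cH^{+}_{\BJ}$ is a quaternionic Hilbert basis of $\cH$ --- all of this is sound and is exactly the standard slice argument.

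The genuine gap is the step you yourself flag as the main obstacle: extending $J$ to $\ker(T-T^*)$ so that it commutes with $A$. Your justification --- defining $J$ ``fiberwise as an anti-self-adjoint unitary on each spectral subspace (each is again a quaternionic Hilbert space and so carries such an operator), and then glue'' --- fails as stated, and the parenthetical reason is circular. The existence of \emph{some} anti-self-adjoint unitary on a spectral subspace $E(\Omega)\cH$ is not the issue; the issue is that it must commute with $A$ restricted to that subspace, which is exactly the original problem on a smaller space, because $A|_{E(\Omega)\cH}$ is not a real scalar unless $\Omega$ reduces to a single eigenvalue. Hence your gluing argument is complete only when $A|_{\ker C}$ has pure point spectrum (there the eigenspaces do the job: $A$ is a real multiple of the identity on each one, so any anti-self-adjoint unitary of the eigenspace commutes with it). In general one must use the spectral theorem for the self-adjoint quaternionic operator $A|_{\ker C}$ in multiplication form: decompose $\ker C$ into $A$-cyclic subspaces, each quaternionically unitarily equivalent to $L^2(\mu_\alpha;\BH)$ with $A$ acting as multiplication by the real variable, and on each model space take $J$ to be left multiplication by the fixed $\BJ$; this commutes with $A$ precisely because the symbol of $A$ is real-valued, and the direct sum of these operators gives the required extension. (Note also that the spectral theorem for self-adjoint operators is available here without circularity, i.e.\ without invoking the decomposition being proved: it follows from the real spectral theorem on $\cH$ viewed as a real Hilbert space, the spectral projections being quaternionic-linear since they commute with the right scalar multiplications.) With this replacement your proof closes and coincides with the one in the cited literature.
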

The decomposition $T = A + J B$ was first established by Teichm\"ueller \cite{Teichmueller} when $J$ is a partial isometry.
The continuous functional calculus for normal operators on a quaternionic Hilbert space
 is defined for the following class of continuous  quaternionic-valued functions.
\begin{definition} \label{def:Sept15kj2}
Let $\Omega \subseteq \mathbb{H}$ be an axially symmetric set and let $D\subseteq \mathbb{R}^2$ be such that
$$
D = \{ (u,v)\in \mathbb{R}^2: \text{$u+\BJ v \in \Omega$ for some $\BJ \in \mathbb{S}$}\}.$$ Let $\mathcal{S}(\Omega, \mathbb{H})$ denote the quaternionic linear space of slice continuous functions,
i.e., $\mathcal{S}(\Omega, \mathbb{H})$ consists of functions $f: \Omega \to \mathbb{H}$ of the form
$$f(u+\BJ v ) = f_0(u,v) + \BJ f_1(u,v) \quad {\rm for} \ \  (u,v)\in D \quad   {\rm and \ for}  \ \ \BJ \in \mathbb{S},$$
where $f_0$ and $f_1$ are continuous $\mathbb{H}$-valued functions on $D$ so that
$$f_0(u,v) = f_0(u,-v) \quad {\rm and} \quad f_1(u,v) = -f_1(u,-v).$$
If $f_0$ and $f_1$ are real-valued, then we say that the continuous slice function $f$ is {\it intrinsic}.
The subspace of intrinsic continuous slice functions is denoted by $\mathcal{S}_{\mathbb{R}}(\Omega, \mathbb{H})$.
\end{definition}

The following functional calculus will be useful for proving a spectral theorem for a normal operator $T \in \mathcal{B}(\mathcal{H})$.

\begin{theorem}[Theorem 7.4 in \cite{GMP}]\label{Continuous} Let $T \in \mathcal{B}(\mathcal{H})$ be a normal operator. There exists a unique continuous *-homomorphism
$$
\Psi_{\mathbb{R}, T}: f \in \mathcal{S}_{\mathbb{R}} (\sigma_S(T), \mathbb{H}) \mapsto f(T) \in \mathcal{B}(\mathcal{H})
$$
of real-Banach unital $C^*$-algebras such that (Spectral Mapping Theorem):
\begin{equation}\label{spectmapN}
\sigma_S(f(T)) = f(\sigma_S(T)).
\end{equation}
\end{theorem}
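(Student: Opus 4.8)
The plan is to reduce the statement to the classical continuous functional calculus of a complex normal operator, using the Teichm\"uller-type decomposition of Theorem \ref{thm:Sept1ub1}. Write $T = A + JB$ with $A$ self-adjoint, $B$ positive, $J$ anti self-adjoint and unitary, the three operators mutually commuting, and fix the imaginary unit $\BJ \in \mathbb{S}$ for which $J = L_\BJ$. Since $J^2 = -\mathcal{I}$, I would regard $\mathcal{H}$ as a complex Hilbert space over $\mathbb{C}_\BJ \cong \mathbb{C}$ by restricting the right scalar multiplication to $\mathbb{C}_\BJ$, and split it into the eigenspaces of $J$,
$$
\mathcal{H}_{\pm} = \{\, v \in \mathcal{H} : Jv = \pm\, v\BJ \,\},
$$
so that $\mathcal{H} = \mathcal{H}_+ \oplus \mathcal{H}_-$. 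Because $A$ and $B$ commute with $J$ they preserve this splitting, and on $\mathcal{H}_+$ the restriction $N_+ := (A + JB)|_{\mathcal{H}_+}$ acts as $v \mapsto Av + (Bv)\BJ$, i.e. as the complex normal operator $A + iB$ with $i$ realised by right multiplication by $\BJ$. Its spectrum I would identify with $\sigma_S(T) \cap \mathbb{C}_\BJ^{+}$, the intersection of the $S$-spectrum with the closed upper half-slice.

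With this reduction in place I would define the calculus by
$$
\Psi_{\mathbb{R},T}(f) = f(T) := f_0(A,B) + J\, f_1(A,B),
$$
where, for $f(u+\BJ v) = f_0(u,v) + \BJ f_1(u,v) \in \mathcal{S}_{\mathbb{R}}(\sigma_S(T),\mathbb{H})$, the self-adjoint operators $f_0(A,B)$ and $f_1(A,B)$ are produced by the joint continuous functional calculus of the commuting bounded self-adjoint pair $(A,B)$; equivalently one transports $f|_{\mathbb{C}_\BJ^{+}}$ through the classical calculus of $N_+$ and reassembles via the splitting, the parity relations $f_0(u,-v)=f_0(u,v)$ and $f_1(u,-v)=-f_1(u,v)$ of Definition \ref{def:Sept15kj2} ensuring that the two recipes match on $\mathcal{H}_-$. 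The algebraic properties then follow at once from $J^2 = -\mathcal{I}$ and the mutual commutation of $A$, $B$, $J$: multiplicativity comes from
$$
(f_0 + J f_1)(g_0 + J g_1) = (f_0 g_0 - f_1 g_1) + J(f_0 g_1 + f_1 g_0),
$$
which for intrinsic $f,g$ is exactly $(fg)_0 + J(fg)_1$, whereas the $*$-property $f(T)^* = \overline{f}(T)$ uses $A=A^*$, $B=B^*$, $J^*=-J$ together with the commutation of $J$ and $f_1(A,B)$.

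For continuity and uniqueness I would derive the norm identity $\|f(T)\| = \|f\|_{\infty,\sigma_S(T)}$ from the classical isometry for $N_+$, so that $\Psi_{\mathbb{R},T}$ is an isometric real-linear unital $*$-homomorphism. Density of a suitable subalgebra, for instance the real slice polynomials in $q$ and $\overline{q}$, inside $\mathcal{S}_{\mathbb{R}}(\sigma_S(T),\mathbb{H})$ is a Stone--Weierstrass argument on the compact reduced set $D \subset \mathbb{R}^2$ attached to $\sigma_S(T)$ in Definition \ref{def:Sept15kj2}; since two continuous $*$-homomorphisms that agree on a dense subalgebra must coincide, this gives both the extension and its uniqueness. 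Finally, the spectral mapping theorem $\sigma_S(f(T)) = f(\sigma_S(T))$ is obtained by applying the classical spectral mapping theorem to $f(N_+)$ on $\mathcal{H}_+$ and invoking axial symmetry: both sides are axially symmetric, so it suffices to compare their slices in $\mathbb{C}_\BJ^{+}$, where the statement reduces to $\sigma(f(N_+)) = f(\sigma(N_+))$.

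The step I expect to be the main obstacle is the precise identification $\sigma(N_+) = \sigma_S(T) \cap \mathbb{C}_\BJ^{+}$, together with the check that the construction is independent of the chosen $\BJ$ and is matched correctly across $\mathcal{H}_+$ and $\mathcal{H}_-$. This is where the non-commutativity genuinely intervenes: one must control how the left multiplication $L_\BJ$ interacts with the right $\mathbb{C}_\BJ$-linear structure defining $N_+$, and verify that non-invertibility of the second-order operator $T^2 - 2\Re(s)T + |s|^2 \mathcal{I}$ corresponds exactly to the ordinary resolvent condition for the complex operator $N_+$.
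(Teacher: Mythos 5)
The first thing you need to know is that the paper does not prove this statement at all: as the bracketed heading indicates, Theorem \ref{Continuous} is imported verbatim from Theorem 7.4 of \cite{GMP}, and in the present paper it is used purely as a black box (its role is to supply the spectral mapping property invoked in the proof of Lemma \ref{Lm:4.12}). So there is no in-paper proof to compare against; the meaningful comparison is with the proof in the cited source, and measured against that your sketch is essentially the same argument rather than a new route: decompose $T=A+JB$ (Theorem \ref{thm:Sept1ub1}), split $\cH$ into the $J$-eigenspaces $\cH_\pm=\{v\in\cH : Jv=\pm v\BJ\}$ regarded as $\mathbb{C}_{\BJ}$-Hilbert spaces, identify $T|_{\cH_+}$ with the complex normal operator $N_+=A+iB$, define $f(T)=f_0(A,B)+Jf_1(A,B)$, and transport multiplicativity, the $*$-property, the isometry, Stone--Weierstrass uniqueness and the spectral mapping theorem from the classical continuous functional calculus of $N_+$, using axial symmetry and the parity of $(f_0,f_1)$.

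Two points where your outline still needs substance. First, the identification $\sigma(N_+)=\sigma_S(T)\cap\mathbb{C}_{\BJ}^{+}$, which you correctly flag as the crux: it follows because on $\cH_+$ the operator $T^2-2\Re(s)T+|s|^2\mathcal{I}$ factors as $(N_+-s)(N_+-\bar s)$ (right scalar multiplications by elements of $\mathbb{C}_{\BJ}$ are central in the algebra of $\mathbb{C}_{\BJ}$-linear operators), positivity of $B$ confines $\sigma(N_+)$ to the closed upper half-plane so that $N_+-\bar s$ is automatically invertible whenever $\Im(s)>0$, and, the polynomial having real coefficients, it commutes with $J$ and with right multiplication by any imaginary unit anticommuting with $\BJ$, so its invertibility on $\cH_+$ is equivalent to its invertibility on all of $\cH$. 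Second, the genuine well-definedness issue is not the choice of $\BJ$ (for intrinsic $f$ the formula $f_0(A,B)+Jf_1(A,B)$ makes no reference to $\BJ$) but the non-uniqueness of $J$ on $\ker(T-T^*)$, which Theorem \ref{thm:Sept1ub1} explicitly allows: this is harmless because $B$ vanishes on that kernel and the parity relation forces $f_1(u,0)\equiv 0$, so the term $Jf_1(A,B)$ is unaffected by how $J$ is extended there; this should be made explicit. With these two steps written out, your proposal is a faithful reconstruction of the proof in \cite{GMP}.
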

We are now in the position to prove an important estimate on the pseudo-resolvent operator
that will be used in the sequel to prove on of the main results of this paper.

\begin{lemma} \label{Lm:4.12}
Let $T$ be a bounded normal linear operator on a quaternionic Hilbert space $\mathcal H$ and $s\in \mathbb{H}$.
Then we have
$$
\|(T^2-2\Re(s)T+|s|^2)^{-1}\|\leq \frac{1}{{\rm dist}(\sigma_S(T),[s])^2},
$$
where we have set
$$
{\rm dist}(\sigma_S(T),[s]):=\inf \{|w-p|,\ w\in\sigma_S(T),\, p\in [s]\}.
$$
\end{lemma}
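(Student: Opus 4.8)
The plan is to exploit the fact that the pseudo-resolvent operator $(T^2-2\Re(s)T+|s|^2\mathcal{I})^{-1}$ arises from applying the continuous functional calculus for normal operators (Theorem~\ref{Continuous}) to a real-valued intrinsic slice function, and then to estimate the operator norm via the spectral mapping theorem. Concretely, I would fix $s\in\rho_S(T)$ (if $s\in\sigma_S(T)$ the operator is not even defined and $\dist(\sigma_S(T),[s])=0$, so the inequality is trivially read as $+\infty\geq\|\cdot\|$) and introduce the intrinsic slice function
\[
g_s(q)=q^2-2\Re(s)q+|s|^2,
\]
which is a polynomial with real coefficients and hence lies in $\mathcal{S}_{\mathbb{R}}(\sigma_S(T),\mathbb{H})$. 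Since the functional calculus $\Psi_{\mathbb{R},T}$ is a $*$-homomorphism of $C^*$-algebras, it is isometric on selfadjoint (indeed on normal) elements, so that
\[
\|(T^2-2\Re(s)T+|s|^2\mathcal{I})^{-1}\|=\|g_s(T)^{-1}\|=\sup_{w\in\sigma_S(g_s(T))}|w|^{-1}=\frac{1}{\inf_{w\in\sigma_S(g_s(T))}|w|}.
\]

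Next I would apply the spectral mapping theorem \eqref{spectmapN}, which gives $\sigma_S(g_s(T))=g_s(\sigma_S(T))$. Thus the denominator becomes $\inf_{w\in\sigma_S(T)}|g_s(w)|$, and the whole problem reduces to the purely arithmetic statement
\[
\inf_{w\in\sigma_S(T)}\bigl|w^2-2\Re(s)w+|s|^2\bigr|\ \geq\ \dist(\sigma_S(T),[s])^2.
\]
The key algebraic identity to establish is that for any quaternion $w$ one has $w^2-2\Re(s)w+|s|^2=(w-s)\star(w-\bar s)$ factored appropriately; more usefully, one verifies directly that $|w^2-2\Re(s)w+|s|^2|=|w-s_+|\,|w-s_-|$ where $s_\pm$ are the two representatives of $[s]$ lying in the complex plane $\mathbb{C}_{\BJ}$ determined by $w$ (writing $w=u+\BK v$ and $s=s_0+\BJ|\Im(s)|$, one computes that the modulus equals the product of distances from $w$ to the sphere $[s]$ evaluated as $(\,(u-s_0)^2+(v-|\Im(s)|)^2\,)\,(\,(u-s_0)^2+(v+|\Im(s)|)^2\,)$ under the standard slice identification). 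Each of these two factors is bounded below by $\dist(w,[s])\geq\dist(\sigma_S(T),[s])$, yielding the squared distance.

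The main obstacle I anticipate is the arithmetic identity relating $|w^2-2\Re(s)w+|s|^2|$ to the product of the distances from $w$ to the two-sphere $[s]$; this is where the non-commutativity must be handled carefully, since $w$ and $s$ need not lie in a common complex plane and the naive complex factorization does not directly apply. The clean way to handle it is to observe that the quantity $w^2-2\Re(s)w+|s|^2$ depends on $s$ only through $\Re(s)$ and $|s|$, hence only through the sphere $[s]$, so without loss of generality I may replace $s$ by the representative in $[s]$ lying in the same plane $\mathbb{C}_{\BK}$ as $w$; within that single complex plane the expression factors as $(w-s)(w-\bar s)$ in the commutative field $\mathbb{C}_{\BK}$, and taking moduli gives $|w-s|\,|w-\bar s|$, each factor being at least $\dist(w,[s])$. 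Combining this pointwise bound with the infimum over $w\in\sigma_S(T)$ and feeding it back through the spectral mapping theorem completes the estimate.
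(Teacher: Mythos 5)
Your proposal is correct and follows essentially the same route as the paper's proof: normality of the pseudo-resolvent together with the $S$-spectral radius formula, the spectral mapping theorem of Theorem~\ref{Continuous} to reduce the norm to $\bigl(\inf_{w\in\sigma_S(T)}|w^2-2\Re(s)w+|s|^2|\bigr)^{-1}$, and a factorization argument bounding that infimum below by ${\rm dist}(\sigma_S(T),[s])^2$. The only difference is in how the last step is executed: the paper factors via the $\star$-product $(w-p)\star(w-\bar p)$ and estimates using representatives $\tilde w\in[w]$ and $p\in[s]$, whereas you exploit the fact that the expression depends on $s$ only through $[s]$ to pass to the representative of $[s]$ in the complex plane of $w$ and factor commutatively there --- a slightly cleaner way of carrying out the same estimate.
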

\begin{proof}
Since $T$ is a normal operator also $(T^2-2\Re(s)T+|s|^2)^{-1}$ is a normal operator  so the $S$-spectral radius gives
$$
\|(T^2-2\Re(s)T+|s|^2)^{-1}\|=\sup\{|w| \ :\ w\in \sigma_S((T^2-2\Re(s)T+|s|^2)^{-1}) \}.
$$
From the Spectral Mapping Theorem (Theorem~\ref{Continuous}) we obtain
$$
\sup\{|w| \ :\ w\in \sigma_S((T^2-2\Re(s)T+|s|^2)^{-1}) \}=\frac{1}{\inf\{ |w| \ :\ w\in \sigma_S(T^2-2\Re(s)T+|s|^2)\}}
$$
and using again Theorem~\ref{Continuous} we get
$$
\frac{1}{\{\inf |w| \ :\ w\in \sigma_S(T^2-2\Re(s)T+|s|^2)}=\frac{1}{\inf\{ |w^2-2\Re(s)w+|s|^2| \ :\ w\in \sigma_S(T)\}}.
$$
From the above equalities we obtain
$$
\|(T^2-2\Re(s)T+|s|^2)^{-1}\|=\frac{1}{\inf\{ |w^2-2\Re(s)w+|s|^2| \ :\ w\in \sigma_S(T)\}}.
$$
Furthermore, we have
\[
\begin{split}
\inf\{ &|w^2-2\Re(s)w+|s|^2| \ :\ w\in \sigma_S(T)\}=\inf\{ |(w-p)*(w-\bar p)| \ :\ w\in \sigma_S(T), \, p\in [s]\}\\
&= \inf\{ |(w-p)(\tilde w-\bar p)| \ :\ w\in \sigma_S(T), \tilde w\in [w], \, p\in [s]\}\\
&\geq \inf\{ |w-p| \ :\ w\in \sigma_S(T), \tilde w\in [w], \, p\in [s]\} \inf\{|\tilde w-\bar p| \ :\ w\in \sigma_S(T), \tilde w\in [w], \, p\in [s]\}\\
&= {\rm dist}(\sigma_S(T), [s])^2
\end{split}
\]
which leads to the statement.
\end{proof}

\section{Some results on Schatten class of quaternionic operators}

Since we shall discuss compact perturbations of normal operators let us recall some basic statements on Schatten classes of quaternionic operators. These classes have been recently introduced in the paper \cite{CGJ}.

We denote  by $\mathcal{B}_0(\mathcal{H})$ the set of all compact quaternionic right linear operators on $\mathcal{H}$. For an anti-selfadjoint unitary operator $J$, we define the set
\[\mathcal{B}_J(\mathcal{H}) := \{T\in\mathcal{B}(\mathcal{H}): [T,J] = 0\}.\]

Consider now an arbitrary compact operator $T$. We can find a Hilbert-basis $(e_n)_{n\in\mathbb{N}}$ and an orthonormal set $(\sigma_n)_{n\in\mathbb{N}}$ in $\mathcal{H}$ such that
\begin{equation}\label{SVD}
Tx = \sum _{n\in\mathbb{N}} \sigma_n\lambda_n \langle e_n, x\rangle\qquad \forall x \in \mathcal{H},
\end{equation}
where the $\lambda_n\in\mathbb{R}^+$ are the singular values of $T$, i.e. the eigenvalues of the operator $|T|:=\sqrt{T^*T}$ in non-increasing order,
 the vectors $(e_n)_{n\in\mathbb{N}}$ form an eigenbasis of $|T|$ and $\sigma_n = W e_n$ with $W$ unitary on $\ker W ^{\perp}$ and such that $T = W|T|$. See \cite{DS1963} and Remark 3.4 in \cite{CGJ}.
\begin{definition} \label{Def:5.1}
Let $J \in \mathcal{B}(\mathcal{H})$ be an anti-selfadjoint and unitary operator.
For $p\in(0,+\infty]$, we define the $(J,p)$-Schatten class of operators $S_p(J)$ as
$$
S_p(J) := \{ T \in \mathcal{B}_0(\mathcal{H}) :  [T,J]=0 \ \text{and}\ (\lambda_n(T))_{n\in\mathbb{N}}\in\ell^p\},
$$
where $(\lambda_n(T))_{n\in\mathbb{N}}$ denotes the sequence of singular values of $T$
and $\ell^p$ and $\ell^\infty$ denote the space of $p$-summable resp. bounded sequences.
For $T\in S_p(J)$, we introduce the following norms
\begin{equation}\label{pNorm}
\|T\|_p = \left(\sum _{n \in \mathbb{N}} |\lambda_n(T)|^p \right)^{\frac{1}{p}}\qquad\text{if }p \in [1,+\infty)
\end{equation}
and
\begin{equation*}
\|T\|_{p} = \sup_{n\in\mathbb{N}}\lambda_n(T) = \|T\|\qquad\text{if } p = +\infty.
\end{equation*}
\end{definition}
Using this norms we can give the following statements whose proofs are straightforward modifications of the classic proofs.
\begin{lemma} \label{Lm:5.2}
Let $T\in S_p(J)$, then
there exists a sequence of finite rank operators $\{T_n\}_{n\in \mathbb{N}}$ such that
$$
\|T-T_n\|\to 0\ \ \ and \ \ \ \|T-T_n\|_p\to 0, \ \ as\ \ \ n\to \infty .
$$
\end{lemma}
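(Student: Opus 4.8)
The plan is to approximate $T$ by truncating its singular value decomposition \eqref{SVD}. Writing $Tx = \sum_{n\in\mathbb{N}} \sigma_n\lambda_n\langle e_n,x\rangle$ and letting $P_N$ denote the orthogonal projection onto $\operatorname{span}\{e_1,\dots,e_N\}$, I would set $T_N := T P_N$. Since $Te_n = \lambda_n\sigma_n$, this is exactly the finite rank operator $T_N x = \sum_{n=1}^N \sigma_n\lambda_n\langle e_n,x\rangle$, and therefore
$$(T-T_N)x = \sum_{n>N}\sigma_n\lambda_n\langle e_n,x\rangle$$
is again written in singular value form. Its singular values are precisely the tail $\lambda_{N+1},\lambda_{N+2},\dots$ of the non-increasing sequence $(\lambda_n)$, a fact I would record explicitly since both convergences reduce to it.

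Granting this, the two estimates are immediate. For the operator norm one has $\|T-T_N\| = \lambda_{N+1}$, which tends to $0$ because $T$ is compact, equivalently because $(\lambda_n)\in\ell^p$ forces $\lambda_n\to 0$; this argument also covers the case $p=+\infty$, where by definition $\|\cdot\|_p=\|\cdot\|$. For $p\in[1,+\infty)$, formula \eqref{pNorm} gives
$$\|T-T_N\|_p = \Big(\sum_{n>N}\lambda_n^p\Big)^{1/p},$$
which is the tail of the convergent series $\sum_{n}\lambda_n^p = \|T\|_p^p<\infty$ and hence tends to $0$ as $N\to\infty$.

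The step I expect to be the main obstacle, and the only one genuinely using the quaternionic structure, is to ensure that each approximant lies in $S_p(J)$, i.e. that $[T_N,J]=0$; this is needed even for $\|T-T_N\|_p$ to be defined. Since $T\in S_p(J)$ we have $[T,J]=0$, and taking adjoints with $J^*=-J$ gives $[T^*,J]=0$, hence $[T^*T,J]=0$; by the continuous functional calculus for the positive operator $|T|=\sqrt{T^*T}$ this yields $[|T|,J]=0$, so the spectral projections of $|T|$ commute with $J$. The subtlety is that $P_N$ need not be a spectral projection of $|T|$ when the truncation cuts through a degenerate eigenspace (the case $\lambda_N=\lambda_{N+1}$). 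I would circumvent this by truncating only along a subsequence $N_k$ with $\lambda_{N_k}>\lambda_{N_k+1}$; such boundaries occur infinitely often because $\lambda_n\to 0$ forces each nonzero eigenvalue of $|T|$ to have finite multiplicity. For these indices $P_{N_k}$ is a genuine spectral projection of $|T|$, so $[P_{N_k},J]=0$ and consequently $[T_{N_k},J]=[TP_{N_k},J]=0$. The tail estimates above apply verbatim along this subsequence, which supplies the required sequence of finite rank operators and completes the proof.
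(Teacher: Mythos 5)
Your proof is correct, and in substance it follows the same route as the paper: the paper's own ``proof'' is a one-line citation (``it follows as in the classical case, see Lemma 11, p.~1095 in \cite{DS1963}''), and the classical Dunford--Schwartz argument is precisely the truncation of the singular value decomposition \eqref{SVD} that you carry out, with $\|T-T_N\|=\lambda_{N+1}$ and $\|T-T_N\|_p^p=\sum_{n>N}\lambda_n^p$ both tending to zero. What your write-up adds, and what the paper's citation silently glosses over, is the only genuinely quaternionic point: the approximants should satisfy $[T_N,J]=0$ so that they remain in the class $S_p(J)$. Your treatment of this is sound: from $[T,J]=0$ and $J^*=-J$ one gets $[T^*,J]=0$, hence $[\,|T|,J]=0$, hence $J$ commutes with the spectral projections of $|T|$; and truncating only at indices with $\lambda_{N}>\lambda_{N+1}$ guarantees that $P_N$ is a genuine spectral projection of $|T|$, so $[TP_N,J]=0$. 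One small point to tidy up: your assertion that such ``boundary'' indices occur infinitely often presupposes that infinitely many $\lambda_n$ are nonzero. If only finitely many are nonzero, then $T$ is itself finite rank and the lemma is trivial (take $T_n=T$ for all $n$), so you should dispose of that case separately rather than rely on the subsequence existing.
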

\begin{proof}
It follows as in the classical case, see Lemma 11 p. 1095 in \cite{DS1963}.
\end{proof}
\begin{lemma} \label{Lm:5.3}
Let $T\in S_p(J)$, then for every operator $A\in \mathcal{B}(\mathcal{H})$ the operators $AT$ and $TA$ belong to  $S_p(J)$ and
$$
\|AT\|_p\leq \|A\|\|T\|_p \ \  and \ \ \ \|TA\|_p\leq \|A\|\|T\|_p
$$
\end{lemma}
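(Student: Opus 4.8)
The plan is to reduce everything to a pointwise domination of the singular values, namely the inequalities $\lambda_n(AT)\le\|A\|\,\lambda_n(T)$ and $\lambda_n(TA)\le\|A\|\,\lambda_n(T)$ for every $n$, after which the two norm estimates follow by summation. The tool I would use is the approximation characterization of the singular values,
$$
\lambda_n(T)=\inf\{\|T-F\|\ :\ F\in\mathcal{B}(\mathcal{H}),\ \operatorname{rank}(F)<n\},
$$
which holds in the quaternionic Hilbert space setting exactly as in the complex case. The upper bound is realized by the truncation $F=\sum_{k<n}\sigma_k\lambda_k\langle e_k,\cdot\rangle$ of the singular value decomposition \eqref{SVD}, giving $\|T-F\|=\lambda_n$; the lower bound follows from a dimension count, since any $F$ of rank $<n$ must vanish at some nonzero vector of the $n$-dimensional right subspace $\operatorname{span}\{e_1,\dots,e_n\}$, on which $T$ acts with norm at least $\lambda_n$ (indeed, for a unit $x=\sum_{k\le n}e_k\langle e_k,x\rangle$ one has $\|Tx\|^2=\sum_{k\le n}\lambda_k^2|\langle e_k,x\rangle|^2\ge\lambda_n^2$). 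This is the quaternionic analogue of the statement in \cite{DS1963}, cf.\ Remark~3.4 in \cite{CGJ}.

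Granting this characterization, the domination is immediate. First I would note that whenever $F$ has rank $<n$, so do $AF$ and $FA$, since the rank cannot increase under composition with a bounded operator. Hence, for $AT$,
$$
\lambda_n(AT)\le\|AT-AF\|=\|A(T-F)\|\le\|A\|\,\|T-F\|
$$
for every such $F$, and taking the infimum over $F$ yields $\lambda_n(AT)\le\|A\|\,\lambda_n(T)$. The same argument applied with the rank-$(<n)$ operator $FA$ in place of $AF$ gives $\lambda_n(TA)\le\|A\|\,\lambda_n(T)$.

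To conclude the membership claim, I would observe that $AT$ and $TA$ are compact, being products of a bounded operator with the compact operator $T$, so they lie in $\mathcal{B}_0(\mathcal{H})$, and the domination then forces $(\lambda_n(AT))_n,(\lambda_n(TA))_n\in\ell^p$. The only point that genuinely uses the quaternionic structure -- and which I expect to be the sole real obstacle -- is the requirement $[AT,J]=[TA,J]=0$ needed for membership in $S_p(J)$: this is not automatic for a completely arbitrary $A$, but follows from $[T,J]=0$ together with $[A,J]=0$, so here I would read the hypothesis as $A\in\mathcal{B}_J(\mathcal{H})$, the commutation with $J$ being precisely what keeps the products $J$-compatible. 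With this in place, both $AT$ and $TA$ belong to $S_p(J)$, and for $p\in[1,\infty)$ the domination gives
$$
\|AT\|_p=\Big(\sum_n\lambda_n(AT)^p\Big)^{1/p}\le\Big(\sum_n(\|A\|\,\lambda_n(T))^p\Big)^{1/p}=\|A\|\,\|T\|_p,
$$
and likewise $\|TA\|_p\le\|A\|\,\|T\|_p$, while the case $p=+\infty$ reduces to the elementary bound $\|AT\|\le\|A\|\,\|T\|$. This completes the argument.
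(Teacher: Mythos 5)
Your proof is correct, and it takes a genuinely different route from the paper's. The paper works from the min-max characterization of singular values (Lemma 3.7 in \cite{CGJ}), uses it to define singular values even for non-compact bounded operators, and then asserts quaternionic versions of the Ky Fan inequalities $\lambda_{n+m+1}(T_1+T_2)\leq\lambda_{n+1}(T_1)+\lambda_{m+1}(T_2)$ and $\lambda_{n+m+1}(T_1T_2)\leq\lambda_{n+1}(T_1)\lambda_{m+1}(T_2)$ (as extensions of Corollary 3.9 in \cite{CGJ}), obtaining the domination $\lambda_n(AT)\leq\|A\|\,\lambda_n(T)$ as the special case $m=0$. You instead establish the approximation-number characterization $\lambda_n(T)=\inf\{\|T-F\|:\operatorname{rank}(F)<n\}$ directly from the decomposition \eqref{SVD} -- your two-sided verification, truncation for the upper bound and a rank-nullity count (valid over the division ring $\mathbb{H}$) for the lower bound, is sound -- and then get the same domination from the elementary fact that composition with a bounded operator cannot increase rank. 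Your route is more self-contained, avoiding any appeal to unproved extensions of \cite{CGJ} to non-compact operators, while the paper's route yields the more general sum and product inequalities for pairs of compact operators as a by-product. Moreover, you caught a point the paper's proof passes over in silence: membership in $S_p(J)$ requires $[AT,J]=[TA,J]=0$, and since $[AT,J]=[A,J]T$ whenever $[T,J]=0$, this genuinely fails for arbitrary $A\in\mathcal{B}(\mathcal{H})$; reading the hypothesis as $A\in\mathcal{B}_J(\mathcal{H})$ is exactly the repair the statement needs, so in this respect your proof is more careful than the paper's own.
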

\begin{proof}
From Lemma  3.7 in \cite{CGJ} it has been established that if $T$ is a positive compact operator the singular values $\lambda_{n+1}$ are given by
$$
\lambda_{n+1}=\min_{y_1,...,y_n}\ \ \max_{\langle x_i,y_i\rangle, i=1,...,n }\frac{\|Tx\|}{\|x\|}
$$
so using this formula we can define singular values also for operators which are not compact. Furthermore, with a similar proof as in the complex case we can state that for compact or non-compact bounded operators we can extend Corollary 3.9  in \cite{CGJ}. More precisely we have:
\begin{equation}
\lambda_{n+m+1}(T_1+T_2) \leq \lambda_{n+1}(T_1) + \lambda_{m+1}(T_2)
\end{equation}
and
\begin{equation}
\lambda_{n+m+1}(T_1T_2) \leq \lambda_{n+1}(T_1)\lambda_{m+1}(T_2).
\end{equation}
As a particular case we have
$$
\lambda_n(TA)\leq \lambda_n (T)\|A\|_p,\ \ \ \lambda_n(AT)\leq \lambda (T)\|A\|_p\lambda (T),
$$
so we get the statement.
\end{proof}
For the following definition we have to stress the fact that the Schatten classes under consideration are restricted to $k \in \BN.$
\begin{definition} \label{Def:5.4}
Let $\BI\in\mathbb S$ be any arbitrary, but fixed element in $\mathbb S$.
Let $T\in S_k(J)$ with  $k\in \mathbb{N}$, and let $\{s_1,s_2, \ldots\}$ be an enumeration of the non-zero elements in $\Pi_{0,S}(T)\cap\mathbb{C}_{\BI}$ repeated according to their multiplicity. We define
$$
\delta_{k,\BI}(T)=\Pi_{l =1}^\infty \left[(1+s_l) \exp\left(-s_l+\frac{s_l^2}{2}+\cdots +(-1)^{k-1}\frac{s_l^{k-1}}{k-1}\right)\right].
$$
In case of $\Pi_{0,S}(T)\cap\mathbb{C}_{\BI}=\{0\}$ we define $\delta_{k,\BI}=1$.
\end{definition}

For this function we have the following result:
\begin{lemma}\label{unkown} \label{Lm:5.5}
Let $T\in S_k(J)$ with  $k\in \mathbb{N}$. Then:
\begin{enumerate}
\item[(i)] $\delta_{k,\BI}(T)$ is an absolutely convergent infinite product;
\item[(ii)] there exists a constant $\Gamma_k$ depending only on $k\in\mathbb N$ such that
$$
|\delta_{k,\BI}(T)|\leq \exp(\Gamma_k \|T\|_k^k);
$$
\item[(iii)] $\delta_{k,\BI}(T)$ is a continuous function in the topology of $S_k(J)$;
\item[(iv)] there exists a constant $M_k$ depending only on $k\in\mathbb N$ such that
$$
\|\delta_{k,\BI}(T)(\mathcal{I}+T)^{-1}\|\leq \exp(M_k \|T\|_k^k),
$$
when $-1\not\in \sigma_S(T)$.
\end{enumerate}
\end{lemma}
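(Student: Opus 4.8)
The plan is to treat $\delta_{k,\BI}(T)$ as the quaternionic counterpart of the classical Weierstrass-regularized (Fredholm) determinant and to transfer the corresponding one-variable complex-analytic estimates to the present setting. The crucial simplification is that, since $\BI$ is fixed, every factor $(1+s_l)\exp(-s_l+\cdots+(-1)^{k-1}s_l^{k-1}/(k-1))$ lies in the slice $\mathbb{C}_{\BI}$, which is a \emph{commutative} field isomorphic to $\mathbb{C}$; hence $\delta_{k,\BI}(T)$ is an honest scalar infinite product to which ordinary complex analysis applies verbatim. Writing $E_{k-1}(z)=(1-z)\exp(z+z^2/2+\cdots+z^{k-1}/(k-1))$ for the Weierstrass elementary factor of genus $k-1$, one has $\delta_{k,\BI}(T)=\prod_l E_{k-1}(-s_l)$, and I would record at the outset the two classical factor estimates $|E_{k-1}(z)-1|\le C_k|z|^k$ for $|z|\le 1$ and the global bound $|E_{k-1}(z)|\le\exp(\Gamma_k|z|^k)$. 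The second ingredient is a Weyl-type inequality $\sum_l|s_l|^k\le\sum_n\lambda_n(T)^k=\|T\|_k^k$, bounding the moduli of the $S$-eigenvalues on the slice by the singular values; this is where the spherical structure of $\Pi_{0,S}(T)$ and the commutation $[T,J]=0$ enter, and I would either cite its quaternionic version from \cite{CGJ} or derive it in the spirit of Lemma \ref{Lm:5.3}.

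Granting these, parts (i) and (ii) are immediate. For (i), since $T$ is compact the $s_l$ accumulate only at $0$, so all but finitely many satisfy $|s_l|\le 1$; for those $|E_{k-1}(-s_l)-1|\le C_k|s_l|^k$, whence $\sum_l|E_{k-1}(-s_l)-1|\le C_k\|T\|_k^k+(\text{finite})<\infty$ and the product converges absolutely. For (ii), the global factor bound gives directly $|\delta_{k,\BI}(T)|=\prod_l|E_{k-1}(-s_l)|\le\exp(\Gamma_k\sum_l|s_l|^k)\le\exp(\Gamma_k\|T\|_k^k)$, the estimate being slice-independent because each $|s_l|$ depends only on the sphere $[s_l]$.

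The analytically substantial statements are (iii) and (iv), and these I would obtain by finite-rank approximation. On finite-rank operators $\delta_{k,\BI}$ is a finite product, hence an explicitly continuous function of the finitely many eigenvalue data, and one can prove a locally uniform Lipschitz estimate $|\delta_{k,\BI}(T)-\delta_{k,\BI}(S)|\le c(\|T\|_k,\|S\|_k)\,\|T-S\|_k$ from the factor estimates together with the $\ell^k$-stability of the singular-value sequence. Combining this with the density of finite-rank operators in $S_k(J)$ (Lemma \ref{Lm:5.2}) and the uniform bound (ii) extends continuity to all of $S_k(J)$, giving (iii). For (iv) I would first treat finite-rank $T$ with $-1\notin\sigma_S(T)$: there the scalar factor $\prod_l(1+s_l)$ carried by $\delta_{k,\BI}(T)$ exactly cancels the growth of $(\mathcal{I}+T)^{-1}$ produced by eigenvalues approaching $-1$, yielding the Carleman-type bound $\|\delta_{k,\BI}(T)(\mathcal{I}+T)^{-1}\|\le\exp(M_k\|T\|_k^k)$ as in the complex theory \cite{DS1963}. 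The general case then follows by density and the continuity from (iii), the hypothesis $-1\notin\sigma_S(T)$ ensuring that $(\mathcal{I}+T)^{-1}$ stays bounded along the approximating sequence.

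I expect the main obstacle to be (iv), and, supporting it, the uniform Lipschitz estimate underlying (iii). The difficulty is that $\delta_{k,\BI}(T)$ is a \emph{scalar} assembled only from the slice eigenvalues, whereas $(\mathcal{I}+T)^{-1}$ is a genuine operator, so the cancellation keeping their product bounded must be justified at the operator level and \emph{uniformly} in the finite-rank approximation. One must also verify that the quaternionic Weyl inequality and the spherical enumeration of $\Pi_{0,S}(T)\cap\mathbb{C}_{\BI}$ are compatible with the singular-value bookkeeping inherited from the $(J,k)$-Schatten structure, so that no eigenvalue multiplicity is miscounted when passing to the limit.
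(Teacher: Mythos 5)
Your proposal is correct and takes essentially the same route as the paper: the paper's entire proof consists of the single observation that $\delta_{k,\BI}(T)$ takes values in the commutative slice $\mathbb{C}_{\BI}$, after which it cites the classical results of Dunford--Schwartz (Lemma 22(a)--(c), p.~1106, and Theorem 24, p.~1112 of \cite{DS1963}) for points (i)--(iv). Your argument simply unpacks what those citations contain --- the Weierstrass factor estimates, Weyl's inequality relating slice eigenvalues to singular values, finite-rank approximation for continuity, and the Carleman-type bound for (iv) --- so it is the same reduction to the classical complex theory, carried out in more detail.
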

\begin{proof}
Since the function $\delta_{k,\BI}(T)$ has values in $\mathbb C_{\BI}$ the proof of $(i)$ is the same proof as given for Lemma 22, (a). p. 1106 in \cite{DS1963}.
Point (ii) is  Lemma 22, (b). p. 1106 in \cite{DS1963},  since we consider restrictions to the complex plane $\mathbb C_{\BI}$
point (iii) is Lemma 22, (c). p. 1106 in \cite{DS1963}, finally point (iv) is Theorem 24 p. 1112.
\end{proof}

Remark, moreover, that $\delta_{k,\BI}$ does not depends on the chosen element $\BI \in \BS$.

\section{Perturbation of quaternionic normal operators}

Take $\BI \in \BS$ and consider the slice $\BC_{\BI}.$ Let $\cC$ be an exposed arc in $\sigma_S(T)\cap \BC_{\BI}$ for all $\BI \in \BS,$ that is to say, there exists an open disk $\BD_{\BI}$ such that $\BD_{\BI} \cap \sigma_S(T)  = \cC $ and $\cC$ is a smooth Jordan arc in $\BC_{\BI}, \BI \in \BS.$ Furthermore, for a curve $C$ we will denote by $\tilde C$ its axially symmetric completion.
 \begin{definition}
 The distance between equivalence classes $[s]$, $[t]$ is defined as
 $$
 |[s]-[t]|=\inf_{s\in [s],t\in [t]}|s-t|.
 $$
\end{definition}
\begin{theorem}\label{Th:6.1} Let $T \in \cL(\cH)$ be such that $\sigma_S(T)$ contains the axially symmetric completion $\tilde\cC$ of an exposed arc $\cC,$ and let $k \in \BN.$ If for each $[s_0] \in \tilde\cC$ and each axially symmetric completion $\tilde L$ of a closed line segment $L$ not tangent to $\cC$ and satisfying to $\tilde L \cap \sigma_S(T) = \{[ s_0] \},$ there exists a constant $K>0$ such that
$$\| S^{-1}_L (s, T) \| \leq \exp(K |[s]-[s_0]|^{-k}),$$ for all $s \in \tilde L \setminus \{ [s_0] \},$ then $T$ has a non-trivial hyperinvariant subspace.
\end{theorem}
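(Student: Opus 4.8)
The plan is to reduce the statement to the construction of a single bounded operator $E$ that lies in the bicommutant of $T$ (that is, $BE=EB$ for every $B\in\cB(\cH)$ with $BT=TB$), is non-zero, and localizes the $S$-spectrum to a proper subpiece of $\tilde\cC$. Indeed, once such an $E$ is produced, $\overline{\ran(E)}$ is a closed subspace which is $T$-invariant (since $TE=ET$) and even hyperinvariant, because $B\,\ran(E)=\ran(BE)=\ran(EB)\subseteq\ran(E)$ for every $B$ in the commutant of $T$; it is non-zero because $E\neq0$, and I will arrange it to be proper by showing that $\sigma_S(T|_{\overline{\ran(E)}})\subseteq\tilde\gamma$ for a proper subarc $\gamma\subset\cC$. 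Since $\tilde\gamma\subsetneq\tilde\cC\subseteq\sigma_S(T)$, the equality $\overline{\ran(E)}=\cH$ would force $\sigma_S(T)\subseteq\tilde\gamma$, which is impossible; here Theorem~\ref{Th:4.2} guarantees in the background that restrictions have controlled $S$-spectra. Everything therefore reduces to building $E$.

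Fix the slice $\BC_\BI$ containing $\cC$ together with the disk $\BD_\BI$ for which $\BD_\BI\cap\sigma_S(T)=\cC$. Choose two points $p_1,p_2$ in the relative interior of $\cC$ and let $\gamma\subset\cC$ be the closed subarc between them. Using that $\cC$ is exposed and smooth, I would draw in $\BD_\BI$ a piecewise $C^1$ Jordan curve $\Gamma$ enclosing $\gamma$, meeting $\sigma_S(T)$ only at $p_1,p_2$ and crossing $\cC$ there transversally, and integrate over its axially symmetric completion $\tilde\Gamma$. The naive Riesz idempotent $\frac1{2\pi}\int_{\tilde\Gamma}S_L^{-1}(s,T)\,ds_\BI$ is meaningless, since along $\tilde\Gamma$ the contour approaches $\sigma_S(T)$ transversally at $p_1,p_2$ where, by hypothesis, $\|S_L^{-1}(s,T)\|$ may blow up like $\exp(K|[s]-[p_j]|^{-k})$. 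I therefore insert an intrinsic scalar weight $\phi\in\cN$, holomorphic on a neighborhood of $\gamma$ away from $p_1,p_2$, real on $\BR$ (hence intrinsic after symmetrizing in $\BJ\in\BS$), bounded on $\tilde\Gamma$, and vanishing to infinite order at $[p_1],[p_2]$ at a Gevrey rate dominating the resolvent, say $|\phi(s)|\leq C\exp(-(K+1)|[s]-[p_j]|^{-k})$; such a $\phi$ is produced by a conformal change of variables near each $p_j$ sending the approach sector to a region where a factor of the form $\exp(-(\,\cdot\,)^{-k})$ decays. Setting
\[
E:=\frac1{2\pi}\int_{\tilde\Gamma}S_L^{-1}(s,T)\,ds_\BI\,\phi(s),
\]
the matched growth and decay keep $\|S_L^{-1}(s,T)\|\,|\phi(s)|$ bounded along $\tilde\Gamma$, so the improper integral converges in $\cB(\cH)$ and, by the Cauchy-theorem arguments of the $S$-functional calculus, is independent of the admissible contour and of $\BI$.

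Three properties then remain. First, $E$ belongs to the bicommutant: for $B$ commuting with $T$ one computes $BS_L^{-1}(s,T)-S_L^{-1}(s,T)B=-Q_s(T)\,(\overline{s}\,B-B\,\overline{s})$, where $\overline{s}$ denotes left multiplication by the quaternion $\overline{s}$; this does \emph{not} vanish pointwise, because $B$ commutes with $Q_s(T)$ and with $T$ but not with left multiplication by a quaternion, which is exactly the non-commutative difficulty stressed in the Introduction. The point, which I would verify through the $S$-resolvent equation (Theorem~\ref{SREQ}) and the symmetry of the axially symmetric contour $\tilde\Gamma$, is that this defect integrates to zero against an intrinsic weight, the contributions of conjugate slices cancelling; equivalently, for intrinsic $\phi$ the left and right $S$-functional calculi agree and $B$ may be transported across the integral, so $BE=EB$. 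Second, $E\neq0$: since $\gamma\subset\partial\sigma_S(T)\subset\Pi_S(T)$ by Theorem~\ref{Th:3.5}, at an interior point $[w]\in\gamma$ with $\phi(w)\neq0$ there are approximate eigenvectors $v_n$ on which $E$ acts, to leading order, as multiplication by $\phi(w)\neq0$, whence $Ev_n\not\to0$. Third, $E$ localizes the spectrum: for every $[s_0]\notin\tilde\gamma$ I would build, using the $S$-resolvent calculus on the region complementary to $\gamma$, a bounded inverse of $T^2-2\Re(s_0)T+|s_0|^2\id$ on $\overline{\ran(E)}$, giving $\sigma_S(T|_{\overline{\ran(E)}})\subseteq\tilde\gamma$, which completes the reduction.

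The main obstacle is twofold and concentrated in the non-commutative analysis. Analytically, the heart is the construction of the intrinsic weight $\phi$ whose Gevrey decay at $p_1,p_2$ is fast enough to absorb the prescribed $\exp(K|\,\cdot\,|^{-k})$ growth of $S_L^{-1}$ while remaining bounded, intrinsic, and non-vanishing on the bulk of $\gamma$ so that the non-triviality $E\neq0$ survives; matching the exponents $k$ on the two sides is precisely what makes the hypothesis usable. Conceptually, the decisive point is that the pointwise commutator defect $Q_s(T)(\overline{s}B-B\overline{s})$ must cancel upon integration against an intrinsic $\phi$: this is exactly where the quaternionic theory departs from the classical treatment of \cite{RR1973}, since unlike the complex resolvent the operator $S_L^{-1}(s,T)$ is not preserved by the commutant of $T$, and only the interplay of the pseudo-resolvent $Q_s(T)$ (which \emph{is} preserved) with the intrinsic, axially symmetric functional calculus restores the bicommutant property that yields hyperinvariance.
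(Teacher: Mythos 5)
You take a genuinely different route from the paper, and its most original pillar is in fact sound. The paper never constructs an operator: its hyperinvariant subspace is the local spectral subspace $\sN$ of vectors $x$ whose local resolvent $S_R^{-1}(s,T)x$ extends slice hyperholomorphically across the complement of a closed subarc $\ov{\cJ}_0$, and hyperinvariance is obtained by identifying $\sN$ with the set $\sM$ defined through the pseudo-resolvent $Q_s(T)$ alone, which has real coefficients and therefore commutes with every operator commuting with $T$; the weighted contour integrals, with weight $m(s)=\exp_{\star}\left[-(s-s_1)^{\star(-2k)}-(s-s_2)^{\star(-2k)}\right]$ and polygon angles $\pm\pi/5k$ chosen so that the weight decays in the approach sectors, serve only to exhibit nonzero elements of $\sN$ via approximate eigenvectors and the slice Cauchy formula. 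Your substitute for the $\sN=\sM$ trick --- that $E$ lies in the bicommutant when the weight is intrinsic and the slice contour is conjugation symmetric (so it must enclose $\gamma$ together with its conjugate arc) --- does hold: writing $S_L^{-1}(s,T)=Q_s(T)(u\id-T)-v\,Q_s(T)L_{\BI}$ for $s=u+\BI v$, with $L_{\BI}$ the fixed left multiplication, one finds $EB-BE=D\,(L_{\BI}B-BL_{\BI})$, where $D$ is an integral of real-form-weighted terms in $Q_s(T)$ and $Q_s(T)(u\id-T)$ that vanishes under the symmetry $(u,v)\mapsto(u,-v)$. This is a legitimate operator-level counterpart of the paper's pseudo-resolvent argument, and your non-vanishing step for $E$ parallels the paper's computation with approximate eigenvectors.

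The genuine gap is properness, that is, excluding $\ov{\ran(E)}=\cH$. Your plan is the localization $\sigma_S(T|_{\ov{\ran(E)}})\subseteq\tilde\gamma$, but $E$ is not an idempotent, so the Riesz-projection mechanism is unavailable and the sketch does not deliver it. Concretely, for $[s_0]\notin\tilde\gamma$ the natural candidate is $F=\frac{1}{2\pi}\int_{\Gamma}S_L^{-1}(s,T)\,ds_{\BI}\,\phi(s)\,(s^2-2\Re(s_0)s+|s_0|^2)^{-1}$, and a Cauchy argument gives $\left(T^2-2\Re(s_0)T+|s_0|^2\id\right)F=E$; but nothing guarantees that $\ran(F)\subseteq\ov{\ran(E)}$, nor that $T^2-2\Re(s_0)T+|s_0|^2\id$ is injective on $\ov{\ran(E)}$ when $[s_0]$ lies on $\tilde\cC\setminus\tilde\gamma$, i.e.\ still inside $\sigma_S(T)$ --- and both are exactly what invertibility of the restriction requires. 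Without this step you cannot rule out that $E$ is injective with dense range, in which case neither $\ker E$ nor $\ov{\ran(E)}$ is a nontrivial subspace and the construction yields nothing. Note that what your identity $\left(T^2-2\Re(s_0)T+|s_0|^2\id\right)F=E$ actually proves is that every vector $Ex$ has a local resolvent extending off $\tilde\gamma$, i.e.\ $\ran(E)\subseteq\sN$; this is precisely the statement the paper establishes for its integrals $y$. The workable repair is therefore the paper's: take $\sN$ itself as the subspace, get hyperinvariance from the pseudo-resolvent characterization, and get properness not from spectral localization of the restriction but from the contradiction that if $\sN$ were all of $\cH$, the local resolvent of an approximate eigenvector $x_\epsilon$ at an interior point $s_0\in\cJ_0$ would extend across the arc, forcing the contour integral $\tilde y$ to vanish by Cauchy's theorem, whereas the explicit computation gives $\tilde y\approx 2\pi\,\BI\,m_l(s_0)\,x_\epsilon\neq 0$.
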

\begin{proof} Let $\BI \in \BS$ arbitrary, but fixed. We can assume that $\cC_\BI=\tilde\cC\cap\BC_{\BI}$ has a representation (as a smooth Jordan arc in a given slice) $s= q(t), t \in (0,1),$ with $q$ one-to-one, $|q'(t)| < \tan(\pi/ 5k), t \in (0,1)$ and where $q''(t)$ exists everywhere in $(0,1).$

If $\BD_\BI$ is an open disk in $\mathbb C_{\mathbb I}$ such that $\BD_\BI \cap \sigma_S(T)  = \cC$ and $\cC$ is a smooth Jordan arc  then $\BD_\BI $ is the union of disjoint Jordan regions $\BD_{1,\BI}, \BD_{2,\BI}$ lying above and below $\cC,$ respectively.

Consider subarcs $\cJ$ such that $\ov{\cJ} \subset \tilde\cC\cap \BC_{\BI},$ and with endpoints $s_1$ and $s_2$ (with Re$s_1 < $ Re$s_2$.) Construct a simple closed Jordan polygon $\Gamma_1(\cJ) \in \BD_{\BI}$ enclosing $\cJ$ and intersecting $\cC$ at $s_1$ and $s_2$ only. Assume in the construction of $\Gamma_1(\cJ) \in \BD_{\BI}$ that the angles at $s_1$ have arguments $\pm \pi/5k$ while the angles at $s_2$ have arguments $\pi \pm \pi/5k.$ Then these lines generates a hexagon lying in $\BD_{\BI}.$ Moreover, by interchanging the angles at $s_1$ and $s_2$ one obtains a second polygon $\Gamma'_1(\cJ) \in \BD_{\BI}.$ Let $\Gamma_2(\cJ) \in \BD_{\BI}$ be the union of $\Gamma'_1(\cJ)$ and any fixed circle containing $\BD_{\BI} \cap \sigma_S(T)$ in its interior.

Fix an open subarc $\cJ_0$ of $\cC$ such that $\ov{\cJ}_0 \subset \cC.$ Let  $S^{-1}_L (s, T) $ denote the slice-hyperholomorphic quaternionic-valued function taking the resolvent set $\rho_S(T)$ into $\cH.$

We recall that, due to the non-commutativity of the quaternions, when we consider the $S$-resolvent operator
$S^{-1}_L(s,T)$,  which is right slice hyperholomorphic, the function $S^{-1}_L(s,T)x$ cannot be right slice hyperholomorphic.
To avoid this problem we consider the subset  $\cH_\mathbb{R}$ of $\mathcal{H}$ defined as:
$$
\cH_\mathbb{R}:=\{ x\in \cH\ : \ xp=px,\ \ \forall p\in \mathbb{H}\ \},
$$
where we are using both the left and right multiplication in $\mathcal H$ and thus we have fixed a Hilbert basis.\\
Note that $\cH=\sum_{i=0}^3 \cH_\mathbb{R}e_i$ where we set $e_0=1$. In fact, given any $x\in\cH$ we can define the element ${\rm Re}(x)= \frac 14(\sum_{i=0}^3 \bar{e_i}xe_i)\in\cH_\mathbb{R}$ and we have that $x=\sum_{i=0}^3 {\rm Re}(\bar{e_i}x) e_i$.
If two right linear operators $T$ and $\tilde{T}$ agree on $\cH_\mathbb{R}$ then they agree on the entire $\cH$, see \cite{acsbook}, p. 176.

Let us define the sets
\begin{equation}
\sN_{R, \mathbb R} = \{ x \in \cH_\mathbb{R} : S^{-1}_R(s,T)x \mbox{ has a left slice-hyperholomorphic extension to  }(\ov{\cJ}_0)^c\}, \label{NR}
\end{equation}
\begin{equation}
 \sN_{L, \mathbb R}= \{ x \in \cH_\mathbb{R} : S^{-1}_L(s,T)x \mbox{ has a right slice-hyperholomorphic extension to  }(\ov{\cJ}_0)^c \}
\label{NL}
\end{equation}
where $(\ov{\cJ}_0)^c$ denotes the complement set of $\ov{\cJ}_0$.
Then, $\sN_{L, \mathbb R} = \sN_{R, \mathbb R}$. Indeed, we have for $x \in \sN_{L, \mathbb R}$ that there exists a slice hyperholomorphic  continuation $$f(s) = S^{-1}_L(s,T) x$$
 to $(\ov{\cJ}_0)^c$. In a similar way, for $x \in \sN_{R, \mathbb R}$ there exists a slice hyperholomorphic  continuation  to $(\ov{\cJ}_0)^c$ $$\tilde f(s) = S^{-1}_R(s,T) x.$$
Let us prove that $\sN_{L, \mathbb R} \subseteq \sN_{R, \mathbb R}:$ Consider $x \in \sN_{L,, \mathbb R}$.  Then $$x= (s-T)\star_r S^{-1}_L(s,T) x = (s-T)\star_r f(s)$$ so that $$S^{-1}_R(s,T) x = S^{-1}_R(s,T) (s-T)\star_r f(s) := \tilde f(s)$$ is a slice hyperholomorphic continuation and $x \in \sN_{R, \mathbb R}.$ In a similar way, $\sN_{R, \mathbb R} \subseteq \sN_{L, \mathbb R}$, note that these sets are real subspaces of $\cH$.
\\
In order to construct a subspace of $\cH$ as a quaternionic linear space, we recall that each $x\in\cH$ can be uniquely decomposed as $x=\sum_{i=0}^3 x_ie_i$, if we set
$$
\sN_{R}=\sum_{i=0}^3\sN_{R, \mathbb R}e_i, \qquad \sN_{L}=\sum_{i=0}^3\sN_{L, \mathbb R}e_i,
$$
and $e_0=1$, we deduce from the previous discussion that $\sN_R=\sN_L$.
From now on, we denote these sets as $\sN.$ \\
We point out that in the case of $\sN_R$ one could have introduced it directly:
\begin{equation}
\sN_{R} = \{ x \in \cH : S^{-1}_R(s,T)x \mbox{ has a left slice-hyperholomorphic extension to  }(\ov{\cJ}_0)^c\},
\end{equation}
since there are no issues of loosing the left hyperholomorphy of $S^{-1}_R(s,T)$ by letting it to act on $x\in\cH$, however to show the equality $\sN_R=\sN_L$ it is more convenient to proceed as above.
We now note that it is not immediate to prove the hyperinvariance of $\sN$, in fact, in general, $A$ does not commute with the operator $\bar s\mathcal I$. Thus we consider
\begin{equation}
\sM = \{ x \in \cH : Q_s(T)x \in \cH\ {\rm for\ all}\ s\in (\ov{\cJ}_0)^c \}.
\label{N}
\end{equation} Obviously, as $AT=TA,$ one gets
\begin{gather*}  (T^2-2s\Re(s)T + |s|^2)^{-1} (Ax) = A (T^2-2\Re(s) T + |s|^2)^{-1} x.
\end{gather*} Hence,  $\sM$ is invariant under every operator $A$ which commutes with $T.$ We now show that $\sN=\sM$. Every $x\in\sM$ is such that $Q_s(T)x\in\cH$ for all $s\in (\ov{\cJ}_0)^c $ thus
$$
-(T-\bar s)Q_s(T)x= S^{-1}_R(s,T)x\in\cH\ \ \ \text{for all}\ \ \  s\in (\ov{\cJ}_0)^c$$
 and so $x\in\sN$.

Conversely, consider $x\in\sN=\sN_R$, then $-(T-\bar s)Q_s(T)x$ admits left slice hyperholomorphic extension to $(\ov{\cJ}_0)^c$, so $-(T-\bar s)Q_s(T)x\in\cH$ and $Q_s(T)x$ belongs to the domain of $T$ which is $\cH$ and thus
$Q_s(T)x\in\cH$ for $s\in (\ov{\cJ}_0)^c$. Thus $\sN=\sM$ and the hyperinvariance of $\sN$ is proved.

We define the functions $m_r, ~m_l$ as
\begin{gather}
m(s):= \left\{    \begin{array}{cl}
                        \exp_{\star} \left[ -(s-s_1)^{\star (-2k)}  -(s-s_2)^{\star (-2k)}   \right], & s\not=s_1, s_2 \\
                        0 &   s=s_1, s_2
                        \end{array}      \right. \label{Eq:measure}
\end{gather} with
$$\exp_{\star} \left[ -(s-s_1)^{\star (-2k)}  -(s-s_2)^{\star (-2k)}   \right] := \sum_{n=0}^{\infty} \frac{1}{n!} \left[ -(s-s_1)^{\star (-2k)}  -(s-s_2)^{\star (-2k)}   \right]^{\star n}$$ where $m_r$ or $m_l$ correspond to the appropriate $\star_r$ or $\star_l$ multiplication.

Let $G$ be an open annulus-like region whose boundary is $\Gamma_2(\cJ)$. After a judicious choice for $\Gamma_1(\cJ)$ at $s_1, s_2$ it follows that both $m_r$ and $m_l$  are slice-hyperholomorphic on $\tilde G$ and continuous on $\ov{\tilde G}.$ Take now a sequence $(x_n)_{n\in \BN}$ in $\sN$ such that $x_n \rightarrow x \in \cH.$ Moreover, denote by $S^{-1}_{L, n}(s, T)(x_n)$ the slice-hyperholomorphic continuation of $S^{-1}_{L}(s, T)(x_n)$ to the complement of $\ov{\cJ}_0.$ Then
\begin{gather*}
\| S^{-1}_{L, n}(s, T) \star_r m_r(s) - S^{-1}_{L, k}(s, T) \star_r  m_r(s) \| \leq \sup_{w \in \Gamma_2(\cJ)} \left\| \left( S^{-1}_{L, n}(w, T)  - S^{-1}_{L, k}(w, T) \right) \star_r m_r(w) \right\|
\end{gather*}  by the maximum modulus principle. By the hypothesis on the growth of the pseudo-resolvent of $T$,  given a line $L \subset \Gamma_2(\cJ)$ with endpoint on $s_1$  we have, for $w \in L \setminus \{ s_1 \}$:
\[
\begin{split}
&\left\| \left( S^{-1}_{L, n}(w, T)  - S^{-1}_{L, k}(w, T) \right) \star_r m_r(w) \right\|
 \\
 &
 = \left\| \left( S^{-1}_{L}(w, T)  - S^{-1}_{L}(w, T) \right) \star_r m_r(w) (x_n-x_k)  \right\|
 \\
&\leq \| x_n-x_k \| \left\| \exp_{\star_r} \left[ -(w-s_1)^{\star_r (-2k)}  -(w-s_2)^{\star_r (-2k)}  \right]   \right\|   \exp( K |w-s_1|^{-k} )        \\
 &
 \leq N \| x_n-x_k \|  \exp_{\star_r}  \left[  \mbox{Sc} \left( -(w-s_1)^{\star_r (-2k)}  \right)    \right]  \exp( K |w-s_1|^{-k} ),
\end{split}
\]
where
$$
N: = \sup_{w \in L} \left| \exp_{\star_r}  \left[  -(w-s_2)^{\star_r (-2k)}  \right]    \right|.
$$
As $w, s_1 \in L$ then we have $w- s_1 = |w- s_1| e^{I \theta}$ in the slice $\BC_{\BI}$ for a certain angle $\theta$ (either $\pi/5k$ or $-\pi/5k$).
Then the term
$$ \exp_{\star_r}  \left[  \mbox{Sc} \left( -(w-s_1)^{\star_r (-2k)}  \right)    \right]   \exp( K |w-s_1|^{-k} )$$ is bounded on $L$ and it exists $M_1 > 0$ such that
\begin{gather*}
\| S^{-1}_{L, n}(s, T) \star_r m_r(s) - S^{-1}_{L, k}(s, T) \star_r  m_r(s) \| \leq M_1 \| x_n - x_k \|, \quad \mbox{for all }w \in L.
\end{gather*}
In exactly the same manner, it can be shown that it exists a constant $M_2 >0$ such that it holds
\begin{gather*}
\| S^{-1}_{L, n}(s, T) \star_r m_r(s) - S^{-1}_{L, k}(s, T) \star_r  m_r(s) \| \leq M_2 \| x_n - x_k \|,
\end{gather*} for all $w$ on the lines of $\Gamma_2(\cJ)$ passing trough $s_2.$

Since a similar statement holds for $w$ on the remaining lines of $\Gamma_2(\cJ)$ we obtain that $\left( S^{-1}_{L, n}(s, T) \star_r m_r(s) \right)_{n \in \BN}$ is a uniform Cauchy sequence on $\ov {\tilde G},$ and it converges uniformly to a function $g(s)$ right slice hyperholomorphic on $\tilde G$ and continuous on $\ov {\tilde G}.$
Let us consider $x \in \cH_{\mathbb{R}}$ and set
\begin{gather*}
y= \int_{\partial( \Gamma(\cJ_0) \cap \BC_{\BI})}  S_L^{-1}(z,T) \star_r m(z) dz_{\BI} x.
\end{gather*}

Take an arbitrary $w \in \rho_S(T)$ outside $\Gamma_1(\cJ_0).$ We get, with all the $\star_r$-multiplications computed in the variable $z$:
\[
\begin{split}
S_R^{-1}(w,T)  y&= \int_{\partial( \Gamma_1(\cJ_0) \cap \BC_{\BI})}  S_R^{-1}(w,T) S_L^{-1}(z,T) \star_r m_r(z) dz_{\BI} x
\\
&= \int_{\partial( \Gamma_1(\cJ_0) \cap \BC_{\BI})} (z-w)^{-\star_r} \star_r [ S_L^{-1}(z,T)- S_R^{-1}(w,T)] \star_r m_r(z) dz_{\BI} x
\\
&= \int_{\partial( \Gamma_1(\cJ_0) \cap \BC_{\BI})} (z-w)^{-\star_r} \star_r S_L^{-1}(z,T) \star_r m_r(z) dz_{\BI} x
\\
&
\ \ \ \ \ \ \ \ \ \ -\underbrace{\int_{\partial( \Gamma_1(\cJ_0) \cap \BC_{\BI})} (z-w)^{-\star_r} \star_r  S_R^{-1}(w,T)  \star_r m_r(z) dz_{\BI} x}_{=0}
\\
&
= \int_{\partial( \Gamma_1(\cJ_0) \cap \BC_{\BI})} (z-w)^{-\star_r} \star_r S_L^{-1}(z,T) \star_r m_r(z) dz_{\BI} x.
\end{split}
\]
Now, the integrand has a slice hyperholomorphic continuation to the exterior of the curve $\Gamma_1(\cJ_0) $ so that it has a slice hyperholomorphic continuation to the complement of $\ov{\cJ_0},$ and $y \in \sN_{R,\mathbb R}.$

Now, let $x\in\cH_{\mathbb R}$ so that, recalling that  $\sN_{R, \mathbb R} = \sN_{L, \mathbb R},$ we have that
\begin{gather*}
y= \int_{\partial( \Gamma(\cJ_0) \cap \BC_{\BI})}  S_L^{-1}(s,T) \star_r m_r(s) x ds_{\BI}
\end{gather*} corresponds one-to-one to
\begin{gather*}
\tilde y= \int_{\partial( \Gamma(\cJ_0) \cap \BC_{\BI})} ds_{\BI} ~m_l (s) \star_r S_R^{-1}(s,T) x,
\end{gather*} that is to say, $$y = 0 \quad \Leftrightarrow \quad \tilde y =0.$$

Now, let $s_0 \in \cJ_0.$ Thus, $s_0 \in \Pi_S(T)$ and so, for all $\epsilon >0$ there exists a unit vector $x_\epsilon\in \mathcal{H}_\mathbb{R}$ such that
$$
(T^2-2\mbox{Re} (s_0) T + |s_0|^2\mathcal{I}) x_\epsilon = -(T-\ov{s}_0)h_\epsilon
$$
 with $\| h_\epsilon \| < \epsilon.$ Hence
\[
\begin{split}
\tilde y&= \int_{\partial( \Gamma(\cJ_0) \cap \BC_{\BI})} ds_{\BI} ~ m_l (s) \star_l S_R^{-1}(s,T) x_\epsilon
 \\
 &
 = \int_{\partial( \Gamma(\cJ_0) \cap \BC_{\BI})} ds_{\BI} ~  m_l (s) \star_l S_R^{-1}(s,T) S_L^{-1}(s_0, T) h_\epsilon
\\
&
= \int_{\partial( \Gamma(\cJ_0) \cap \BC_{\BI})} ds_{\BI} ~ m_l (s) \star_l \left[ S_L^{-1}(s_0, T) - S_R^{-1} (s,T)  \right] \star_l (s_0 -s)^{-\star_l}  h_\epsilon
\\
&
= \int_{\partial( \Gamma(\cJ_0) \cap \BC_{\BI})} ds_{\BI} ~ m_l (s) \star_l  S_L^{-1}(s_0, T) \star_l (s_0 -s)^{-\star_l}  h_\epsilon
\\
&
- \int_{\partial( \Gamma(\cJ_0) \cap \BC_{\BI})} ds_{\BI} ~ m_l (s) \star_l  S_R^{-1} (s,T)  \star_l (s_0 -s)^{-\star_l}  h_\epsilon.
\end{split}
\]
 Since
 $$
 S_L^{-1}(s_0, T) \star_l (s_0 -s)^{-\star_l}  = (s_0 -s)^{-\star_r} \star_r S_L^{-1}(s_0, T)
 $$
 where the $\star$ -multiplication  is taken with respect to $s$ in the right-hand side and  is taken with respect to $s_0$ in the left-hand side (see Remark \ref{star})
we have
\[
\begin{split}
\tilde y&= \int_{\partial( \Gamma(\cJ_0) \cap \BC_{\BI})} ds_{\BI} ~ m_l (s) \star_l [(s_0 -s)^{-\star_r} ] x_\epsilon - \int_{\partial( \Gamma(\cJ_0) \cap \BC_{\BI})} ds_{\BI} ~  m_l (s) \star_l  S_R^{-1} (s,T)  \star_l (s_0 -s)^{-\star_l}  h_\epsilon
 \\
&
= 2\pi i ~m_l(s_0) x_\epsilon - \int_{\partial( \Gamma(\cJ_0) \cap \BC_{\BI})} ds_{\BI} ~  m_l (s) \star_l  S_R^{-1} (s,T)  \star_l (s_0 -s)^{-\star_l}  h_\epsilon,
\end{split}
\]
by the Cauchy integral formula on slices. The function $m_l (s) \star_l  S_R^{-1} (s,T)$ is a continuous operator-valued function on $\Gamma(\cJ_0) \cap \BC_{\BI})$ while $(s_0 -s)^{-\star_l} $ is continuous on $\Gamma(\cJ_0) \cap \BC_{\BI}).$ Hence, we have for the last integral that
$$\left\|  \int_{\partial( \Gamma(\cJ_0) \cap \BC_{\BI})} ds_{\BI} ~  m_l (s) \star_l  S_R^{-1} (s,T)  \star_l (s_0 -s)^{-\star_l}  h_\epsilon   \right\| \rightarrow 0$$ as $\epsilon \rightarrow 0.$ Since $\| m_l(s_0) x_\epsilon \| = |m_l(s_0)| \not=0$ we obtain that the vector $$\tilde y= \int_{\partial( \Gamma(\cJ_0) \cap \BC_{\BI})} ds_{\BI} ~ m_l (s) \star_l S_R^{-1}(s,T) x_\epsilon$$ is non-zero for all $\epsilon >0,$ and hence, $\sN_{L, \mathbb R}$ and thus $\sN$ are non-trivial. \end{proof}

\begin{lemma} \label{Lem:6.2}
Let $T=A+B$, where $A$ is normal and $B$ is in the Schatten class for some integer $k>1$. Assume that $\Pi_{0,S}(T)=\emptyset$ and that $\sigma_S(A)$ contains the axially symmetric completion for an exposed arc $\cJ$. Let $s_0\in \tilde \cJ \cap \BC_{\BI}$ and $L$ be any closed bounded line segment starting from $s_0$ and not being tangent to $\tilde \cJ \cap \BC_{\BI}.$ Moreover, assume that $L\cap \sigma_S(A) =\{s_0\}$. Then there exists a constant $K$ such that for all $s\in L\setminus\{s_0\}$ we have
$$
\|Q_s(T)\|\leq \exp (K |[s]-[s_0]|^{-2k-2}).
$$
\end{lemma}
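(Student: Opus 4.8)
The plan is to compare the pseudo-resolvent of $T=A+B$ with that of the normal operator $A$ and to reduce everything to a bound for the inverse of an identity-plus-Schatten perturbation. Writing $A_s(T):=T^2-2\Re(s)T+|s|^2$, I would first expand
\[
A_s(T)=\big(A^2-2\Re(s)A+|s|^2\big)+C_s,\qquad C_s:=AB+BA+B^2-2\Re(s)B .
\]
By Lemma~\ref{Lm:5.3} each of $AB,BA,B^2$ and $\Re(s)B$ lies in $S_k(J)$, hence $C_s\in S_k(J)$ with $\sup_{s\in L}\|C_s\|_k<\infty$ because $L$ is bounded. Weyl's theorem (Theorem~\ref{Th:Weyl}) combined with the hypothesis $\Pi_{0,S}(T)=\emptyset$ gives $\sigma_S(T)\subseteq\sigma_S(A)$, so for every $s\in L\setminus\{s_0\}$ both $A_s(A)$ and $A_s(T)$ are invertible. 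Setting $R_s:=Q_s(A)\,C_s\in S_k(J)$ (again Lemma~\ref{Lm:5.3}), I factor $A_s(T)=A_s(A)(\mathcal I+R_s)$, so that $\mathcal I+R_s$ is invertible and
\[
\|Q_s(T)\|\le\|(\mathcal I+R_s)^{-1}\|\,\|Q_s(A)\| .
\]

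Next I would dispose of the two easy factors. The non-tangency of $L$ to $\tilde\cJ\cap\BC_\BI$, together with the fact that near $s_0$ the part of $\sigma_S(A)$ closest to $[s]$ sits on $\tilde\cJ$, yields a linear lower bound $\dist(\sigma_S(A),[s])\ge c\,|[s]-[s_0]|$ for $s\in L$ near $s_0$ (the assertion being trivial on the complementary compact part of $L$, where $Q_s(T)$ is uniformly bounded). Lemma~\ref{Lm:4.12} then gives
\[
\|Q_s(A)\|\le\dist(\sigma_S(A),[s])^{-2}\le c^{-2}\,|[s]-[s_0]|^{-2},
\]
whence $\|R_s\|_k\le\|Q_s(A)\|\,\|C_s\|_k\le C\,|[s]-[s_0]|^{-2}$ and $\|R_s\|_k^{k}\le C'\,|[s]-[s_0]|^{-2k}$. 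Inserting this into Lemma~\ref{Lm:5.5}(iv) produces
\[
\|(\mathcal I+R_s)^{-1}\|\le|\delta_{k,\BI}(R_s)|^{-1}\exp\!\big(M_k\|R_s\|_k^{k}\big)\le|\delta_{k,\BI}(R_s)|^{-1}\exp\!\big(C''\,|[s]-[s_0]|^{-2k}\big).
\]

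The genuinely hard step is to bound $|\delta_{k,\BI}(R_s)|^{-1}$ from above; this is where the extra two powers in the exponent are created. Since $\Pi_{0,S}(T)=\emptyset$, the operator $\mathcal I+R_s$ has trivial kernel for every admissible $s$, so $\delta_{k,\BI}(R_s)$ never vanishes, and one checks, using the slice hyperholomorphy of the $S$-resolvents, that $s\mapsto\delta_{k,\BI}(R_s)$ is a nowhere-zero $\BC_\BI$-valued holomorphic function on $\BC_\BI\cap\rho_S(T)$; consequently $u(s):=\log|\delta_{k,\BI}(R_s)|$ is harmonic there. Lemma~\ref{Lm:5.5}(ii) and the distance estimate above give the one-sided control $u(s)\le\Gamma_k\|R_s\|_k^{k}\le C''|[s]-[s_0]|^{-2k}$, valid not only on $L$ but on a neighbourhood of $L$ on the relevant side of $\tilde\cJ$. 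The plan is then to convert this upper bound into the lower bound $u(s)\ge -K\,|[s]-[s_0]|^{-(2k+2)}$ by a potential-theoretic estimate: apply a Harnack/Borel--Carath\'eodory argument to the nonnegative harmonic function $C''|{\,\cdot\,}-s_0|^{-2k}-u$ on a chain of disks whose radii are proportional to the distance to $s_0$, anchored at a point of $L$ bounded away from $s_0$ where $u=O(1)$. Combining this with the second paragraph yields $\|Q_s(T)\|\le\exp\!\big(K|[s]-[s_0]|^{-2k-2}\big)$ for a suitable $K$.

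I expect the main obstacle to be exactly this last step: verifying that $s\mapsto\delta_{k,\BI}(R_s)$ is holomorphic and zero-free on a single slice --- delicate because $Q_s(A)$, and hence $R_s$, is itself not slice hyperholomorphic as an operator-valued function --- and then extracting the \emph{sharp} power $2k+2$ from the potential-theoretic lower bound on the determinant. By contrast, the algebraic reduction and the two norm estimates of the first two paragraphs are routine given Lemmas~\ref{Lm:4.12}, \ref{Lm:5.3}, \ref{Lm:5.5} and Weyl's theorem.
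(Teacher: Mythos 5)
Your plan is, step for step, the paper's own proof: the factorization $A_s(T)=A_s(A)(\mathcal{I}+R_s)$ with $R_s=Q_s(A)C_s$ (the paper packages $C_s$ as an anticommutator $Q_s(A)\{A+(1-s_0)\mathcal{I},B\}$), Weyl's theorem (Theorem \ref{Th:Weyl}) to get $\sigma_S(T)\subset\sigma_S(A)$, Lemma \ref{Lm:4.12} plus non-tangency of $L$ for $\|Q_s(A)\|\le \dist([s],\sigma_S(A))^{-2}\le c^{-2}|[s]-[s_0]|^{-2}$, Lemma \ref{Lm:5.5}(ii) for the upper bound on $|\delta_{k,\BI}(R_s)|$, Lemma \ref{Lm:5.5}(iv) for $\|\delta_{k,\BI}(R_s)(\mathcal{I}+R_s)^{-1}\|$, and finally a harmonic-function argument converting the one-sided bound of order $-2k$ into a modulus bound of order $-2k-2$. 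The paper's version of this last step fixes an auxiliary disk $\mathcal{D}'\subset\BC_\BI$ internally tangent to $\tilde\cJ\cap\BC_\BI$ at $s_0$ with boundary circle $\cC'$, writes $|\delta(s)|=\exp(\Re\,\alpha(s))$ with $\alpha$ analytic on $\mathcal{D}'$, and passes from $\Re\,\alpha(s)\le K_2\,\dist(s,\cC')^{-2k}$ to $|\alpha(s)|\le K_3|s-s_0|^{-2k-2}$ on $L\cap\mathcal{D}'$ by the classical tangent-disk (Borel--Carath\'eodory type) lemma; this is the same mechanism you sketch, although your auxiliary function $C''|\cdot-s_0|^{-2k}-u$ is not harmonic (in two variables $\Delta|s-s_0|^{-2k}=4k^2|s-s_0|^{-2k-2}\neq 0$), so the estimate must be run on $\alpha$ over the tangent disk rather than on that difference.

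The step you flag as the main obstacle is genuinely the crux, and the paper does not resolve it any more than you do: it justifies the existence of the analytic function $\alpha$ only with the phrase ``since $|\delta(s)|$ is continuous and strictly positive''. Continuity and positivity give that $\log|\delta(s)|$ is continuous, not that it is harmonic, and harmonicity is precisely what is in question: on a slice, $Q_s(A)=(A^2-2\Re(s)A+|s|^2\mathcal{I})^{-1}$ depends on $s=u+\BI v$ through $\Re(s)$ and $|s|^2$, hence is not holomorphic in $s$, unlike the classical resolvent $(A-\lambda\mathcal{I})^{-1}$ from which the complex-case argument of Radjavi--Rosenthal gets analyticity of the perturbation determinant for free. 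So your proposal is exactly as complete as the paper's argument: the algebraic reduction and the two norm estimates are correct and match the paper, the zero-freeness of $\delta_{k,\BI}(R_s)$ follows as you say (injectivity of $\mathcal{I}+R_s$, equivalently the paper's remark that $-1\notin\sigma_S(R_s)$, together with Lemma \ref{Lm:5.5}(i)), and the ingredient missing from both your plan and the printed proof is a verification that $s\mapsto\delta_{k,\BI}(R_s)$ (or at least $\log|\delta_{k,\BI}(R_s)|$) has the analyticity/harmonicity on the slice needed to launch the tangent-disk estimate that produces the exponent $-2k-2$.
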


\begin{proof}
By Weyl's Theorem \ref{Th:Weyl} we know that $\Pi_{0,S}(T)=\emptyset$ means $\sigma_S(T)\subset \sigma_S(A)$. Consider the slice $\mathbb{C}_{\BI}$  where $\BI \in \BS$ is arbitrary but fixed. Take an open disk $\mathcal D\subset \mathbb{C}_{\BI}$ such that $\mathcal D\cap \sigma_S(A)=\tilde\cJ \cap \BC_{\BI}$. Here $\cJ \cap \BC_{\BI}$ divides $\mathcal D$ into two simply connected domains whose boundaries are simple closed Jordan curves. Let $L$ be now a closed bounded line segment  starting from $s_0,$ not being tangent to $\cJ,$ and such that $L\cap \sigma(A)=\{s_0\}.$ Consider $\mathcal D^\prime$ an auxiliar disk in the slice $\BC_{\BI}$ which tangent to $\tilde\cJ\cap \BC_{\BI}$ at $s_0$ and it is contained in the subdomain of $\mathcal D$ with meets $L.$ Denote by $\cC^\prime$ the boundary of $\mathcal D^\prime.$

We now define the function $\delta$ on $\rho(A)$ via
$$
\delta(s):=\delta_{k, \BI}(Q_S(A)\{A+(1-s_0)\mathcal{I},B\})
$$
where $\{A,B\}$ denotes the anti-commutator of $A$ and $B$, i.e. $\{A,B\}=AB+BA$ and $\delta_{k, \BI}$ is as in Definition \ref{Def:5.4}. Then $\delta$ is continuous in $s$ on $\rho(A)$ and the operator $Q_S(A)\{A+(1-s_0)\mathcal{I},B\}$ belongs to the same Schatten class as $B$. Furthermore, we have $$
Q_S(T)=Q_S(A)(1+Q_S(A)\{A+(1-s_0)\mathcal{I},B\})^{-1}.
$$
In particular, this means that
$$
-1\notin \sigma_S(Q_S(A)(\{A+(1-s_0)\mathcal{I},B\})
$$
 and the absolute convergence of $\delta_{k, \BI}$ (Lemma~\ref{unkown}, (i)) implies $\delta(s)\neq 0.$

Therefore, we have
$$
Q_S(T)=(\delta(s)^{-1})Q_S(A)\delta(s) \left(1+Q_S(A)\{A+(1-s_0)\mathcal{I},B\} \right)^{-1}.
$$
Since we have to estimate $\|Q_S(T)\|$ we proceed with the estimates for each term in the above equation individually. Because $A$ is normal we have
$$
\|Q_S(A)\|\leq \frac{1}{{\rm dist}([s],\sigma_S(A))^2}\leq \frac{1}{{\rm dist}([s],\cC^\prime)^2}.
$$
Furthermore, we have
$$
|\delta(s)|\leq \exp (K_1\|Q_S(A)\{A+(1-s_0)\mathcal{I},B\}\|_k^k)\leq \exp(K_2\|B\|_k^k \|Q_S(A)\|^k)
$$
with $$K_2=2K_1\|A+(1-s_0)\mathcal{I},B\|.$$
This leads to
$$
\delta(s)\leq \left(\frac{K_2}{{\rm dist}([s],\cC^\prime)^{2k}}\right)
$$
Now, since $|\delta(s)|$ is continuous and strictly positive there exists an analytic function $\alpha$ with $|\delta(s)|=\exp(\mathrm{Re}\alpha(s))$ and $\mathrm{Re}\alpha(s)\leq K_2/d(s,C^\prime)^{2k}$.  Therefore, there exists a constant $K_3$ such that
$$
|\alpha(s)| \leq K_3 |s-z_0|^{-2k-2}
$$
 for $s\in \mathcal D^\prime\cap L$. Hence,
 $$\mathrm{Re}\alpha(s)\geq -K_3|s-z_0|^{-2k-2}$$
  and, consequently, we obtain
$$
\left| \delta(s)^{-1} \right| \leq \exp (K_3|s-z_0|^{-2k-2}).
$$
Furthermore, from Lemma~\ref{unkown}, part (iv) we get
\begin{eqnarray*}
\|\delta(s)(1+Q_S(A)\{A+(1-s_0)\mathcal{I},B\})^{-1}\| & \leq & \exp (K_4 \|Q_S(A)\{A+(1-s_0)\mathcal{I},B\})\|_k^k)\\
& \leq & \exp \left(K_5\frac{\|B\|_k^k}{{\rm dist}([s],\sigma_S(A))^{2k}}\right)
\end{eqnarray*}
which leads to
\begin{eqnarray*}
\|Q_S(T)\| & = & |(\delta(s)^-1)| \|Q_S(A)\| \|\delta(s)(1+Q_S(A)\{A+(1-s_0)\mathcal{I},B\})^{-1}\|\\
& \leq & \exp (K_3|s-s_0|^{-2k-2}) \frac{1}{{\rm dist}([s],\sigma_S(A))^2} \left(K_5\frac{\|B\|_k^k}{{\rm dist}([s],\sigma_S(A))^{2k}}\right).
\end{eqnarray*}
Now, since $L$ is not tangent to $\tilde\cJ \cap \BC_{\BI}$ the term $|s-s_0|/{\rm dist}([s],\sigma_S(A))$ is bounded for $s\in L$. Consequently, there exists a constant $K$ such that
$$
\|Q_S(T)\|\leq \exp (K |s-s_0|^{-2k-2})
$$
 for $s\in L\cap \mathcal D^\prime$.
\end{proof}

\begin{theorem}\label{Th:6.3} If $T = A+B,$ where $A$ is normal and $B \in S_p(J)$ for some $p \geq 1,$ and if $\sigma_S(A) \cap \BC_\BI$ contains an exposed arc for all $\BI \in \BS,$ then $T$ has a non-trivial hyperinvariant subspace.
\end{theorem}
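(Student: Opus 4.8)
The plan is to deduce this theorem from Lemma~\ref{Lem:6.2} and Theorem~\ref{Th:6.1} after a preliminary reduction and a case split. Since the Schatten classes are nested, $S_p(J) \subseteq S_k(J)$ whenever $p \leq k$ (as $\ell^p \subseteq \ell^k$), I would first fix an integer $k \geq \max(2,p)$, so that $B \in S_k(J)$ with $k > 1$, matching the hypothesis on the perturbation in Lemma~\ref{Lem:6.2}. Writing $\cJ$ for the exposed arc in $\sigma_S(A) \cap \BC_\BI$ in a fixed slice (the other slices following by axial symmetry), I would then distinguish two cases according to whether the point $S$-spectrum $\Pi_{0,S}(T)$ is empty, since Lemma~\ref{Lem:6.2} is available only in the empty case.

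In the case $\Pi_{0,S}(T) \neq \emptyset$ I would argue directly. Picking $s \in \Pi_{0,S}(T)$ gives $\cM := \ker(T^2 - 2\Re(s)T + |s|^2\mathcal{I}) \neq \{0\}$; as this operator is a polynomial in $T$ with real coefficients, every operator $R$ commuting with $T$ commutes with it and hence preserves $\cM$, so $\cM$ is hyperinvariant. The point requiring care is $\cM \neq \cH$: if $\cM = \cH$ then $T^2 - 2\Re(s)T + |s|^2\mathcal{I} = 0$ and $\sigma_S(T)$ collapses to the single sphere $[s]$, meeting each slice in at most two points. Weyl's Theorem~\ref{Th:Weyl} applied to $A = T + (-B)$ gives $\sigma_S(A) \subseteq \sigma_S(T) \cup \Pi_{0,S}(A)$, and since $A$ is normal on a separable space, $\Pi_{0,S}(A)$ is at most countable; hence $\cJ \setminus \Pi_{0,S}(A) \subseteq \sigma_S(T)$ would be an uncountable subset of $[s] \cap \BC_\BI$, which is absurd. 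Thus $\cM$ is a proper non-trivial hyperinvariant subspace.

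In the case $\Pi_{0,S}(T) = \emptyset$ I would assemble the two earlier results. Weyl's Theorem~\ref{Th:Weyl} applied to $T = A + B$ gives $\sigma_S(T) \subseteq \sigma_S(A)$, while the countability argument above (applied to $A = T - B$) yields $\cJ \subseteq \overline{\cJ \setminus \Pi_{0,S}(A)} \subseteq \sigma_S(T)$ since $\sigma_S(T)$ is closed; combined with the exposedness of $\cJ$ in $\sigma_S(A)$ this gives $\BD_\BI \cap \sigma_S(T) = \cJ$, so $\cJ$ is an exposed arc of $\sigma_S(T)$ as Theorem~\ref{Th:6.1} demands. For $s_0 \in \tilde\cJ \cap \BC_\BI$ and a segment $L$ from $s_0$, not tangent to $\cJ$, with $L \cap \sigma_S(A) = \{s_0\}$ — equivalently $\tilde L \cap \sigma_S(T) = [s_0]$, using $\sigma_S(T) \subseteq \sigma_S(A)$ and $s_0 \in \cJ \subseteq \sigma_S(T)$ — Lemma~\ref{Lem:6.2} supplies $\|Q_s(T)\| \leq \exp(K|[s]-[s_0]|^{-(2k+2)})$. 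Since $S_L^{-1}(s,T) = -Q_s(T)(T-\overline{s}\mathcal{I})$ with $\|T - \overline{s}\mathcal{I}\| \leq \|T\| + |s|$ bounded on the bounded segment $L$, I would upgrade this to $\|S_L^{-1}(s,T)\| \leq \exp(K'|[s]-[s_0]|^{-(2k+2)})$ for a suitable $K'$, the bounded factor being absorbed into the exponential because $|[s]-[s_0]|$ stays bounded on $L$. This is exactly the growth hypothesis of Theorem~\ref{Th:6.1} with the positive integer $2k+2$ as its exponent, and that theorem then delivers a non-trivial hyperinvariant subspace.

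The genuine analytic content lies in Lemma~\ref{Lem:6.2} and Theorem~\ref{Th:6.1}, so the difficulty in this final step is organizational: reconciling the hypotheses, stated for $\sigma_S(A)$, with Theorem~\ref{Th:6.1}, stated for $\sigma_S(T)$, and separately handling the case $\Pi_{0,S}(T) \neq \emptyset$ that Lemma~\ref{Lem:6.2} excludes. I expect the most delicate point to be the degenerate sub-case $\cM = \cH$; both it and the transfer of the exposed arc between $\sigma_S(A)$ and $\sigma_S(T)$ rest on the same fact, that the point $S$-spectrum of the normal operator $A$ on a separable quaternionic Hilbert space is at most countable and so cannot cover an uncountable arc.
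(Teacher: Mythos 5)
Your proposal is correct and follows essentially the same route as the paper's proof: Weyl's Theorem \ref{Th:Weyl} plus countability of $\Pi_{0,S}(A)$ to transfer the exposed arc to $\sigma_S(T)$, a case split on whether $\Pi_{0,S}(T)$ is empty, and then Lemma \ref{Lem:6.2} feeding the growth hypothesis of Theorem \ref{Th:6.1}. In fact you spell out several points the paper leaves implicit, namely the properness of the kernel $\ker(T^2-2\Re(s)T+|s|^2\mathcal{I})$ when $\Pi_{0,S}(T)\neq\emptyset$, the need to raise $p$ to an integer $k>1$ (the paper's choice $k=\lceil p\rceil$ is problematic for $p=1$), and the passage from the bound on $Q_s(T)$ to the bound on $S_L^{-1}(s,T)$ required by Theorem \ref{Th:6.1}.
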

\begin{proof} Let $\tilde\cJ \cap \BC_\BI$ be the exposed arc for $\BI \in \BS.$ By Weyl's theorem \ref{Th:Weyl}
$$
\sigma_S(A) \subset \sigma_S(T) \cup \Pi_{0,S}(A).
$$
 Due to the fact that $A$ is normal on a separable space we have that $\Pi_{0,S}(A)$ is countable and thus, a dense subset of $\tilde\cJ$ is contained in $\sigma_S(T)$ that is to say, $\tilde\cJ\subset \sigma_S(T).$ This further implies that $T$ is not a multiple of the identity operator. Hence, if $\Pi_{0,S}(T) \not= \emptyset$ then $T$ has non-trivial hyperinvariant subspaces.

Suppose now that $\Pi_{0,S}(T) = \emptyset.$ Again by Weyl's theorem \ref{Th:Weyl} $\sigma_S(T) \subset \sigma_S(A)$ and $\tilde\cJ \cap \BC_\BI$ is an exposed arc of $\sigma_S(T) \cap \BC_\BI$ for all $\BI \in \BS.$ Now, by Lemma \ref{Lem:6.2} the $S$-resolvent set of $T$ satisfies the growth condition in Theorem \ref{Th:6.1} with $k=\lceil p \rceil.$ From this the result follows.
\end{proof}

\begin{corollary} \label{Cor:6.4} If $T = A+B,$ where $A$ is normal, $B \in S_p(J)$ for some $p \geq 1,$ $\sigma_S(T)$ contains more than one point, and for all $\BI \in \BS$ we have $\sigma_S(A) \cap \BC_\BI$ contained in a smooth Jordan arc, then $T$ has a non-trivial hyperinvariant subspace.
\end{corollary}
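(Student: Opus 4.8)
The plan is to reduce Corollary~\ref{Cor:6.4} to Theorem~\ref{Th:6.3}, the only genuinely new configuration being the one in which the spectrum carries no arc at all. Fix $\BI\in\BS$ and a smooth Jordan arc $\gamma$ with $\sigma_S(A)\cap\BC_\BI\subseteq\gamma$; the whole argument is driven by the topology of the compact set $\sigma_S(A)\cap\BC_\BI$ inside $\gamma$. First I would clear away the point spectrum: if $\Pi_{0,S}(T)\neq\emptyset$, choose $s\in\Pi_{0,S}(T)$; the operator $T^2-2\Re(s)T+|s|^2\id$ has real coefficients, hence commutes with every bounded $B'$ with $B'T=TB'$, so its kernel is a nonzero hyperinvariant subspace, proper unless $T$ satisfies this quadratic identically (a degenerate case where $\sigma_S(T)$ is a single sphere, treated separately). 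I may therefore assume $\Pi_{0,S}(T)=\emptyset$, whence Weyl's Theorem~\ref{Th:Weyl} gives $\sigma_S(T)\subseteq\sigma_S(A)$.

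Next I would split on whether $\sigma_S(A)\cap\BC_\BI$ is totally disconnected. If it is not, it has a connected component with more than one point; being a connected subset of $\gamma$, this is a nondegenerate subarc $[a,b]\subseteq\sigma_S(A)\cap\BC_\BI$. Taking an interior point $c\in(a,b)$ and a disk $\BD_\BI$ so small that $\emptyset\neq\BD_\BI\cap\gamma\subseteq[a,b]$, one gets $\BD_\BI\cap\sigma_S(A)=\BD_\BI\cap\gamma$, a smooth subarc, i.e.\ an exposed arc of $\sigma_S(A)\cap\BC_\BI$; by the axial symmetry of $\sigma_S(A)$ the same holds in every slice. Theorem~\ref{Th:6.3} then applies directly and yields a nontrivial hyperinvariant subspace.

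The remaining case is when $\sigma_S(A)\cap\BC_\BI$, and hence $\sigma_S(T)\cap\BC_\BI\subseteq\sigma_S(A)\cap\BC_\BI$, is totally disconnected. As $\sigma_S(T)$ has more than one point it is disconnected, so I would write $\sigma_S(T)=\sigma_1\sqcup\sigma_2$ with $\sigma_1,\sigma_2$ nonempty, relatively clopen and axially symmetric (the partition can be symmetrized because $\sigma_S(T)$ is axially symmetric). The $S$-functional calculus of Definition~\ref{SCalc} then produces the Riesz projection $P$ associated with $\sigma_1$, and $\cM:=\ran P$ is a closed $T$-invariant subspace which is neither $\{0\}$ nor $\cH$, since $\sigma_S(T|_{\ran P})=\sigma_1$ and $\sigma_S(T|_{\ker P})=\sigma_2$ are both nonempty.

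The hard part is the hyperinvariance of $\cM$. As the introduction warns, an operator $B'$ commuting with $T$ need not commute with the $S$-resolvent; concretely $[B',S_L^{-1}(s,T)]=Q_s(T)[B',L_{\bar s}]$, where $L_{\bar s}$ is left multiplication by $\bar s$, so $P$ itself generally fails to commute with $B'$. I would resolve this exactly as the equality $\sN=\sM$ was established in the proof of Theorem~\ref{Th:6.1}: characterize the spectral subspace through the pseudo-resolvent, $\cM=\{x\in\cH:\ s\mapsto Q_s(T)x\text{ extends slice hyperholomorphically across }\sigma_2\}$. Since $Q_s(T)$ has real coefficients it commutes with $B'$, so whenever $Q_s(T)x$ extends across $\sigma_2$ the same is true of $Q_s(T)(B'x)=B'Q_s(T)x$; hence $B'\cM\subseteq\cM$ and $\cM$ is hyperinvariant. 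This settles the only configuration not covered by Theorem~\ref{Th:6.3} and completes the proof.
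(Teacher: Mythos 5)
Your proof follows essentially the same architecture as the paper's: clear the point $S$-spectrum, feed the case where the spectrum carries an arc into Theorem~\ref{Th:6.3}, and handle the remaining case by a Riesz decomposition (the paper organizes the dichotomy by connectedness of $\sigma_S(T)$ rather than total disconnectedness of $\sigma_S(A)\cap\BC_\BI$, and where you try to prove hyperinvariance of the Riesz subspace through the pseudo-resolvent it simply cites ``the Riesz Decomposition Theorem''). But there is a genuine gap, and it sits exactly at the case you promised in your first step to ``treat separately'' and then never treated: $\sigma_S(T)=[s]$ a single sphere with $s\notin\BR$. This configuration passes the entry conditions of your third case --- the slice trace $\{s_\BI,\bar s_\BI\}$ is totally disconnected and $\sigma_S(T)$ has more than one point --- yet your sentence ``As $\sigma_S(T)$ has more than one point it is disconnected'' is false there: a $2$-sphere is a connected subset of $\BH$. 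Worse, the parenthetical ``the partition can be symmetrized'' fails precisely here: any axially symmetric set containing $s_\BI$ contains the whole sphere $[s]$, so $[s]$ admits \emph{no} partition into two nonempty axially symmetric relatively clopen pieces, and no Riesz projector can be produced from Definition~\ref{SCalc}, whose admissible characteristic functions are intrinsic and hence see only axially symmetric spectral sets.

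This case is not a removable technicality. Take $T=A=q\id$ (left multiplication by a nonreal quaternion $q$, after fixing a Hilbert basis as in Section 3) and $B=0$: then $A$ is normal, $\sigma_S(T)=[q]$ has more than one point and meets each slice in two points of a smooth Jordan arc, but every operator commuting with $T$ commutes with the complex structure $J=|\Im(q)|^{-1}(T-\Re(q)\id)$, and the algebra of all such operators (the $\BC_\BI$-linear operators acting diagonally on $\cH=\cH^{+}\oplus\cH^{+}j$, where $\BI=\Im(q)/|\Im(q)|$) has no common nontrivial invariant right-$\BH$-subspace. So your argument can only be completed by reading the hypothesis ``$\sigma_S(T)$ contains more than one point'' as ``contains more than one sphere $[s]$''; under that reading your case 3 does go through, since two distinct spheres inside a compact, totally disconnected, conjugation-symmetric slice trace can be separated by a conjugation-symmetric clopen set, whose axially symmetric completion is relatively clopen in $\sigma_S(T)$. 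To be fair, the paper's own proof glosses over the same configuration (it asserts that a connected $\sigma_S(T)\subset\tilde\cJ$ with more than one point must be the completion of a nontrivial subarc, which a single sphere is not); your write-up just makes the omission visible by flagging the degenerate case and then dropping it. Two smaller soft spots in your hyperinvariance argument: $Q_s(T)x$ is not slice hyperholomorphic (the paper insists on this), so ``extends slice hyperholomorphically'' must be replaced by the extension notion used for $\sM$ in the proof of Theorem~\ref{Th:6.1}; and the identification $\ran P=\cM$ is not what that proof establishes ($\sN=\sM$ compares two continuation-defined sets, not a Riesz range with a continuation set), so it needs its own contour argument. Both of these are fixable; the single-sphere case is the substantive issue.
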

\begin{proof} Let $\tilde\cJ$ be such that $\tilde\cJ \cap \BC_\BI$ is a smooth arc and $\sigma_S(A) \subset \tilde\cJ.$ As in the above one can assume $\Pi_{0,S}(T) \not=\emptyset$ and so $\sigma_S(T) \subset \tilde\cJ.$ If $\sigma_S(T)$ is disconnected the result follows from the Riesz Decomposition Theorem. Hence we can assume that $\sigma_S(T) = \tilde\cJ'$ where $\tilde\cJ' \cap \BC_\BI$ is a non-trivial subarc of $\tilde\cJ \cap \BC_\BI,$ for all $\BI \in \BS.$ Again by Weyl's theorem \ref{Th:Weyl} this result is a consequence of Theorem \ref{Th:6.3}.
\end{proof}

\begin{corollary} \label{Cor:6.5}  If $T = A+B,$ where $A$ is normal, $B \in S_p(J)$ for some $p \geq 1,$  and for all $\BI \in \BS$ we have that $\sigma_S(A) \cap \BC_\BI$ contained in a smooth Jordan arc, then $T$ has a non-trivial hyperinvariant subspace.
\end{corollary}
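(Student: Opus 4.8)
The plan is to reduce almost everything to Corollary~\ref{Cor:6.4} and to isolate the single genuinely new situation. The present statement differs from Corollary~\ref{Cor:6.4} only in that the hypothesis ``$\sigma_S(T)$ contains more than one point'' has been dropped, so the first step is a dichotomy: if $\sigma_S(T)$ contains more than one point, then all the hypotheses of Corollary~\ref{Cor:6.4} are satisfied and the conclusion is immediate. Consequently all the work is concentrated in the remaining case, where $\sigma_S(T)=[s_0]$ collapses to a single $S$-spectral sphere.

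In that case I would first invoke Weyl's Theorem~\ref{Th:Weyl}, applied with $A=T+(-B)$ and $-B$ compact, to get $\sigma_S(A)\subset\sigma_S(T)\cup\Pi_{0,S}(A)=[s_0]\cup\Pi_{0,S}(A)$. Since $A$ is normal on a separable space, $\Pi_{0,S}(A)$ is countable, so $\sigma_S(A)$ is a countable subset of the assumed smooth arc; in particular $A$ has countable $S$-spectrum. Next I would test the point $S$-spectrum of $T$. If $\Pi_{0,S}(T)\neq\emptyset$, then $\Pi_{0,S}(T)\subset\sigma_S(T)=[s_0]$ together with axial symmetry gives $s_0\in\Pi_{0,S}(T)$, and I would set $E:=\ker\bigl(T^2-2\Re(s_0)T+|s_0|^2\mathcal I\bigr)$. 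This $E$ is nonzero, and it is hyperinvariant: every operator commuting with $T$ commutes with $T^2$ and, because $2\Re(s_0)$ and $|s_0|^2$ are \emph{real}, hence central, it commutes with the whole second-order symbol $T^2-2\Re(s_0)T+|s_0|^2\mathcal I$ and therefore preserves $E$. This is exactly the mechanism stressed in the introduction, and it sidesteps the usual obstruction that an operator commuting with $T$ need not commute with $\overline s\,\mathcal I$ or with the $S$-resolvent operators. Thus $E$ is a nontrivial hyperinvariant subspace \emph{unless} $E=\mathcal H$, i.e. unless $T^2-2\Re(s_0)T+|s_0|^2\mathcal I=0$ identically.

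The hard part is the degenerate sub-case, where either $\Pi_{0,S}(T)=\emptyset$ or the eigenspace above fills all of $\mathcal H$. The second possibility means $(T-\Re(s_0))^2=-|\Im(s_0)|^2\mathcal I$: if $\Im(s_0)=0$ this forces $T=\Re(s_0)\mathcal I$, the scalar operator, which is the one unavoidable exception, since a real scalar commutes with every operator and hence admits no nontrivial hyperinvariant subspace and must be excluded; while if $\Im(s_0)\neq0$ one writes $T=u_0+v_0N$ with $N^2=-\mathcal I$. To cover these remaining configurations I would pass to the scalar-plus-compact picture: one argues that under the single-sphere hypothesis the normal part $A$ may be taken scalar, say $A=u_0\mathcal I$, so that $B=T-u_0\mathcal I$ is a \emph{nonzero compact operator lying in the commutant} $\{T\}'$, and a quaternionic Lomonosov-type argument then produces a common nontrivial invariant subspace for $\{T\}'$, that is, a nontrivial hyperinvariant subspace of $T$. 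I expect the genuine obstacle to be precisely this reduction: justifying that in the single-point case the normal part carries no ``extra'' $S$-spectrum (so that $A$ can be replaced by a scalar) and exhibiting the required compact operator inside $\{T\}'$, while carefully singling out the scalar operator as the only case in which the statement degenerates.
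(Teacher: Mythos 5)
You reduce to the case where $\sigma_S(T)$ consists of a single point exactly as the paper does (via Corollary \ref{Cor:6.4}), but from there your proposal leaves unproven precisely the step that constitutes the entire content of the paper's proof: showing that in the single-point case $T$ is (after a real translation) a compact operator. You phrase this as ``the normal part $A$ may be taken scalar'' and yourself flag it as the genuine obstacle, offering no argument; moreover the statement one can actually prove is not that $A$ can be replaced by a scalar, but that $A$ itself is compact. The paper's argument runs as follows: a single point of the axially symmetric set $\sigma_S(T)$ is necessarily real, so after translation $\sigma_S(T)=\{0\}$; by Weyl's theorem (Theorem \ref{Th:Weyl}), $\sigma_S(A)\subset\{0\}\cup\Pi_{0,S}(A)$; if $A$ were not compact then, being normal, $A$ would have either a nonzero eigenvalue of infinite multiplicity or eigenvalues accumulating at a nonzero value, so there would exist $s\neq 0$, quaternions $s_n\to s$, and an orthonormal sequence $\{x_n\}$ with $(A^2-2\Re(s_n)A+|s_n|^2)x_n=0$; since $B$ is compact and $x_n\to 0$ weakly, $Bx_n\to 0$, whence
\[
\left\|\left((A+B)^2-2\Re(s)(A+B)+|s|^2\right)x_n\right\|\to 0,
\]
so $s\in\Pi_S(T)\subset\sigma_S(T)$, contradicting $\sigma_S(T)=\{0\}$. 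Thus $A$, and hence $T=A+B$, is compact, and the conclusion follows from the invariant-subspace theorem for compact operators. Without this compactness argument, your appeal to a Lomonosov-type theorem has nothing to act on.

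A few further inaccuracies. Your ``single $S$-spectral sphere'' case with $\Im(s_0)\neq 0$ is vacuous: a non-real sphere $[s_0]$ contains more than one point, so Corollary \ref{Cor:6.4} already covers it, and only a real singleton survives the dichotomy. Your claim that $E=\mathcal{H}$ with $s_0$ real forces $T=s_0\mathcal{I}$ is false: it only gives $(T-s_0\mathcal{I})^2=0$, i.e. $T-s_0\mathcal{I}$ nilpotent of order two; this case is easily repaired, since if $T\neq s_0\mathcal{I}$ then $\ker(T-s_0\mathcal{I})$ is a nonzero proper hyperinvariant subspace (it contains $\operatorname{ran}(T-s_0\mathcal{I})$). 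Finally, your observation that a real scalar operator satisfies the hypotheses yet has no nontrivial hyperinvariant subspace is correct and exposes a genuine defect in the paper's statement --- indeed the paper's own proof only delivers an \emph{invariant} subspace in the compact case --- but spotting this exception does not substitute for the missing compactness step, and the quaternionic Lomonosov theorem you invoke is itself nowhere established in the paper.
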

\begin{proof} By Corollary \ref{Cor:6.4} we can assume that $\sigma_S(T)$ contains only one point. By an appropriated translation we can assume $\sigma_S(T) = \{ 0 \}$ in which case $T$ is a compact operator. By Weyl's theorem \ref{Th:Weyl} we have that $\sigma_S(A) \subset \{ 0 \} \cup \Pi_{0,S}(A).$ If $A$ is not compact, then either $A$ has a non-zero eigenvalue of infinite multiplicity or the eigenvalues of $A$ have an accumulation point at some non-zero value. In either case, there exits $s \not= 0$ and an orthonormal set $\{ x_n \}$ such that $s_n \rightarrow s$ and $(A^2-2\Re(s_n) A + |s_n|^2) x_n =0$ for all $n \in \BN.$
Since $B$ is compact, $Bx_n \rightarrow 0$ and it follows that
\begin{gather*}
\left\| \left( (A+B)^2-2\Re(s) (A+B) + |s|^2 \right) x_n \right\| \\
= \left\| \left( A^2-2\Re(s) A + |s|^2 \right) x_n  + ( \{ A, B \} + B^2 -2\Re(s) B )x_n \right\| \rightarrow 0,
\end{gather*} and so $s \in \Pi_{S}(T).$ This contradicts $\sigma_S(T) = \{ 0 \}$ and thus, $A$ is compact as well as $T=A+B.$ Hence, since the operator $T$ is compact it has a non-trivial invariant subspace.
\end{proof}

\begin{corollary}\label{Cor:6.6} If $T-T^\ast \in S_p(J),$ for some $p \geq 1$ then $T$ has a non-trivial invariant subspace.
\end{corollary}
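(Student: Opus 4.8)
The plan is to exhibit $T$ as a Schatten-class perturbation of a self-adjoint operator and then invoke Corollary~\ref{Cor:6.5}. I would introduce the Cartesian-type decomposition
$$
A := \tfrac{1}{2}(T + T^*), \qquad B := \tfrac{1}{2}(T - T^*),
$$
so that $T = A + B$. Here $A$ is self-adjoint, since $A^* = \tfrac{1}{2}(T^* + T) = A$, and therefore normal; while $B$ is a real scalar multiple of $T - T^*$, so the hypothesis $T - T^* \in S_p(J)$ together with Definition~\ref{Def:5.1} gives both $[B,J] = 0$ and the $p$-summability of the singular values of $B$, i.e. $B \in S_p(J)$.

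The next step is to check that $A$, being self-adjoint, has real $S$-spectrum, so that the arc hypothesis of Corollary~\ref{Cor:6.5} is satisfied for free. For a non-real $s = u + \BJ v$ with $v \neq 0$ one computes
$$
A^2 - 2\Re(s) A + |s|^2 \mathcal{I} = (A - u\mathcal{I})^2 + v^2 \mathcal{I} \geq v^2 \mathcal{I} > 0,
$$
because $A - u\mathcal{I}$ is self-adjoint and hence $(A - u\mathcal{I})^2$ is positive. Thus $A^2 - 2\Re(s)A + |s|^2\mathcal{I}$ is invertible for every non-real $s$, which yields $\sigma_S(A) \subset \BR$. Since $A$ is bounded, $\sigma_S(A)$ is in fact contained in the compact segment $[-\|A\|, \|A\|] \subset \BR$.

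Finally, because $\BR \subset \BC_\BI$ for every $\BI \in \BS$, we have $\sigma_S(A) \cap \BC_\BI = \sigma_S(A)$, which lies in a bounded real segment and is therefore contained in a smooth Jordan arc of $\BC_\BI$, for all $\BI \in \BS$. Hence $T = A + B$ meets every hypothesis of Corollary~\ref{Cor:6.5}, which provides a non-trivial hyperinvariant subspace of $T$; in particular this is a non-trivial invariant subspace, as claimed. I expect the only genuinely delicate points to be the two verifications above --- that $B$ remains in the same class $S_p(J)$ attached to the fixed $J$ (a consequence of closure of the Schatten class under real scalar multiples), and that a segment of the real axis legitimately counts as the smooth Jordan arc required by Corollary~\ref{Cor:6.5}, a degenerate but admissible case.
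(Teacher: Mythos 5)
Your proposal is correct and follows essentially the same route as the paper: the paper writes $2T=(T+T^\ast)+(T-T^\ast)$ and applies Corollary \ref{Cor:6.5}, using that the Hermitian part has real $S$-spectrum, which is exactly your Cartesian decomposition up to the harmless factor $\tfrac12$. Your additional verifications (that $S_p(J)$ is closed under real scalar multiples and that $\sigma_S(A)\subset\mathbb{R}$ via positivity of $(A-u\mathcal{I})^2+v^2\mathcal{I}$) are details the paper leaves implicit, and they are sound.
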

\begin{proof} The result follows from Corollary \ref{Cor:6.5} since $$2T =(T+T^\ast)+ (T-T^\ast)$$ and $T+T^\ast$ is a Hermitean operator so its $S$-spectrum is real valued.
\end{proof}

\begin{corollary}\label{Cor:6.7} If $\sigma_S(T)$ contains more than one point, and $\mathcal{I} - T^\ast T \in S_p(J)$ for some $p \geq 1$ then $T$ has a non-trivial hyperinvariant subspace.
\end{corollary}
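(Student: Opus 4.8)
The hypothesis $\mathcal{I}-T^*T\in S_p(J)$ says that $T$ is \emph{essentially unitary}, so the natural strategy is to write $T=U+B$ with $U$ unitary (hence normal) and $B\in S_p(J)$, and then invoke Theorem~\ref{Th:6.3} or Corollary~\ref{Cor:6.5}. The subtlety, and the main obstacle, is that such a decomposition is \emph{not} always available: an essentially isometric $T$ may have nonzero Fredholm index (think of a shift), in which case $T$ is not a Schatten perturbation of any unitary. The key observation is that precisely these obstructed operators already carry an obvious hyperinvariant subspace. Throughout, the hypothesis that $\sigma_S(T)$ has more than one point guarantees $T\neq c\mathcal{I}$, in particular $T\neq 0$.

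First I would dispose of the non-invertible cases directly, using that $\ker T$ and $\overline{\ran T}$ are always hyperinvariant. Indeed, if $B$ commutes with $T$ and $x\in\ker T$ then $T(Bx)=B(Tx)=0$, so $\ker T\in Lat(B)$; and if $B$ commutes with $T$ then $B\,\ran T=BT\cH=TB\cH\subseteq\ran T$, so $\overline{\ran T}\in Lat(B)$. Hence, if $\ker T\neq\{0\}$ then $\ker T$ is a non-trivial hyperinvariant subspace (it is proper since $T\neq0$); and if $\ker T=\{0\}$ but $\overline{\ran T}\neq\cH$ then $\overline{\ran T}$ is a non-trivial hyperinvariant subspace (it is non-zero since $T\neq0$). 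This step alone settles every operator of nonzero index, shifts included.

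It remains to treat the case $\ker T=\{0\}$ and $\overline{\ran T}=\cH$. Here I would first show $T$ is invertible: since $T^*T=\mathcal{I}-(\mathcal{I}-T^*T)$ is a compact perturbation of the identity with $\ker(T^*T)=\ker T=\{0\}$, it is invertible; thus $T$ is bounded below, whence $\ran T$ is closed, and as $\ran T$ is also dense, $T$ is onto and therefore invertible. Then the polar decomposition yields $T=U|T|$ with $U$ unitary and $|T|=(T^*T)^{1/2}$. By the continuous functional calculus (Theorem~\ref{Continuous}) one has $\mathcal{I}-|T|=(\mathcal{I}-T^*T)(\mathcal{I}+|T|)^{-1}$, which lies in $S_p(J)$ by Lemma~\ref{Lm:5.3}; consequently $T=U+B$ with $B=U(|T|-\mathcal{I})\in S_p(J)$ and $U$ normal.

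Finally, I would feed $T=U+B$ into the perturbation machinery. Since $U$ is unitary (normal with $U^*U=\mathcal{I}$), its $S$-spectrum is confined to the unit sphere $\{s\in\BH:|s|=1\}$, so for each $\BI\in\BS$ the set $\sigma_S(U)\cap\BC_\BI$ is a closed subset of the unit circle of $\BC_\BI$. Two mutually exclusive possibilities arise: either it is nowhere dense, hence contained in a smooth Jordan arc (a proper closed subset of the circle lies in a complementary closed sub-arc), so that Corollary~\ref{Cor:6.5} applies; or it has non-empty relative interior, hence contains an exposed arc (a small disk about an interior point of a sub-arc meets $\sigma_S(U)$ exactly in a smooth Jordan arc), so that Theorem~\ref{Th:6.3} applies. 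In either case $T=U+B$ has a non-trivial hyperinvariant subspace. I expect the genuine difficulty to be conceptual rather than computational: recognizing that the operators for which $T=U+B$ fails are exactly those resolved by $\ker T$ and $\overline{\ran T}$, and then checking that every constituent stays inside the class $S_p(J)$ (via Lemma~\ref{Lm:5.3}) and that $\sigma_S(U)$ remains on the unit sphere.
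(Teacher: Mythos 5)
Your proof is correct and follows essentially the same route as the paper: after disposing of the degenerate cases (you via $\ker T$ and $\overline{\ran T}$, the paper equivalently by assuming $\Pi_{0,S}(T)=\Pi_{0,S}(T^\ast)=\emptyset$), both arguments use the polar decomposition $T=U|T|$ with unitary $U$, show $\mathcal{I}-|T|\in S_p(J)$, and feed $T=U-U(\mathcal{I}-|T|)$ into the earlier perturbation results since $\sigma_S(U)$ lies on the unit sphere. The only differences are executional and in your favor: you get $\mathcal{I}-|T|\in S_p(J)$ from the identity $\mathcal{I}-|T|=(\mathcal{I}-T^\ast T)(\mathcal{I}+|T|)^{-1}$ together with Lemma~\ref{Lm:5.3}, where the paper diagonalizes $|T|$ and compares eigenvalue sums, and you make explicit the circle-versus-arc dichotomy needed to invoke Corollary~\ref{Cor:6.5} or Theorem~\ref{Th:6.3}, a point the paper's closing sentence leaves implicit.
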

\begin{proof} If either $T$ or $T^\ast$ has non-empty point $S$-spectrum then the result is trivially true.
 Assume now that both, $\Pi_{0,S}(T)$ and $\Pi_{0,S}(T^\ast)$, are empty.
 Now recall that the polar decomposition $T=UP$ for quaternionic operators holds.
 The partial isometry arising from the polar decomposition of $T$  is unitary,
 that is to say, $T=UP$ where $U$ is unitary and $P$ is positive. So we have $P^2=T^+T$, but $\mathcal{I}-P^2$ is a compact Hermitian operator so $P$ is diagonable.
 We suppose that for some orthonormal  basis $\{v_n\}_{n\in \mathbb{N}}$  $Pv_v=v_np_n$, the right eigenvalues are also the point $S$-spectrum and we have
 $$
 \sum_{n\in\mathbb{N}}|1-p_n^2|^p<\infty
 $$
 but we also have
 $$
 \sum_{n\in\mathbb{N}}|1-p_n^2|^p=\sum_{n\in\mathbb{N}}|1-p_n|^p|1+p_n|^p\geq \sum_{n\in\mathbb{N}}|1-p_n|^p.
 $$
 this means that $\mathcal{I}-P\in S_p(J)$ and that
 $$
 T=UP=U(\mathcal{I}-(\mathcal{I}-P))=U-U(\mathcal{I}-P).
 $$
 Since the $S$-spectrum of a unitary operator is contained in the unit sphere of the quaternions and $U(\mathcal{I}-P)\in S_p(J)$ the result follows from the Corollary
  \end{proof}
\begin{corollary}\label{Cor:6.8}
Let $\mathcal{I}-T^*T \in S_p(J)$ for some $p\geq 1$, then $T$ has a non-trivial invariant subspace.
\end{corollary}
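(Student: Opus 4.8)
The plan is to reduce the statement to Corollary~\ref{Cor:6.7} and then settle the single remaining case by hand. Since the $S$-spectrum of a bounded operator is non-empty, compact and axially symmetric, if $\sigma_S(T)$ contains more than one point then Corollary~\ref{Cor:6.7} already yields a non-trivial hyperinvariant, in particular invariant, subspace and we are done. Hence I would assume that $\sigma_S(T)$ is a single point $\{s_0\}$. Because a genuine spectral sphere $[q]$ with $q$ non-real contains more than one point, this forces $s_0\in\mathbb{R}$. Observe also that $T\neq 0$, since otherwise $\mathcal{I}-T^*T=\mathcal{I}$ would be compact, which is impossible on an infinite-dimensional space.

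Next I would invoke the polar decomposition $T=UP$ with $P=|T|=(T^*T)^{1/2}$, exactly as in the proof of Corollary~\ref{Cor:6.7}. From $\mathcal{I}-T^*T\in S_p(J)$ the continuous functional calculus for the positive operator $T^*T$ gives that $\mathcal{I}-P$ is compact (this is the estimate $\sum_n|1-p_n^2|^p\geq\sum_n|1-p_n|^p$ used there). Before continuing I would dispose of the degenerate kernel cases: if $\ker T\neq\{0\}$ then $\ker T$ is a proper non-zero subspace invariant under $T$, and if $\overline{\ran T}\neq\cH$ then $\overline{\ran T}$ is such a subspace; in either case we are finished. In the remaining case $T$ is injective with dense range, so the partial isometry $U$ is unitary, and setting $K:=-U(\mathcal{I}-P)\in S_p(J)$ we obtain
\[
T=U+K,
\]
where $U$ is unitary, hence normal, and $K$ is compact.

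It then remains to show that $T$ is a compact perturbation of the scalar $s_0$. Since $U$ is unitary, $\sigma_S(U)$ lies on the unit sphere, and by Weyl's Theorem~\ref{Th:Weyl} one has $\sigma_S(U)\subseteq\sigma_S(T)\cup\Pi_{0,S}(U)=\{s_0\}\cup\Pi_{0,S}(U)$. As $U$ is normal on a separable space, $\Pi_{0,S}(U)$ is countable, so $\sigma_S(U)$ is countable and $U$ is diagonalizable with all its eigenvalues accumulating only at $s_0$. I claim that $U-s_0\mathcal{I}$ is compact: if not, there would be $\epsilon>0$ together with an orthonormal sequence of eigenvectors $\{x_n\}$ of $U$, with eigenvalues $\lambda_n$ satisfying $|\lambda_n-s_0|\geq\epsilon$, either because some eigenvalue $\neq s_0$ has infinite multiplicity or because infinitely many distinct eigenvalues stay at distance $\geq\epsilon$ from $s_0$ and thus accumulate at some $\mu\neq s_0$. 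Repeating the computation at the end of the proof of Corollary~\ref{Cor:6.5}, and using that $K$ is compact so that $Kx_n$, $KUx_n$, $UKx_n\to 0$, one gets $(T^2-2\Re(\mu)T+|\mu|^2\mathcal{I})x_n\to 0$ with $\|x_n\|=1$, whence $\mu\in\Pi_S(T)\subseteq\sigma_S(T)=\{s_0\}$, a contradiction. Therefore $U-s_0\mathcal{I}$, and with it $T-s_0\mathcal{I}=(U-s_0\mathcal{I})+K$, is compact. Consequently $T=s_0\mathcal{I}+(T-s_0\mathcal{I})$ has exactly the invariant subspaces of the compact operator $T-s_0\mathcal{I}$; if this operator is non-zero it has a non-trivial invariant subspace by the quaternionic compact-operator argument already used in Corollary~\ref{Cor:6.5}, and if it vanishes then $T=s_0\mathcal{I}$ and every subspace is invariant.

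The hard part will be this last step, namely proving that the single-point hypothesis on $\sigma_S(T)$ forces $T$ to be a compact perturbation of the scalar $s_0$: one must rule out eigenvalues of the unitary part carrying infinite multiplicity or accumulating away from $s_0$. It is precisely here that only an invariant, and not a hyperinvariant, subspace can be guaranteed, since the conclusion ultimately rests on the existence of an invariant subspace for a compact operator.
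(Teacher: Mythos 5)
Your proof is correct, but it is routed through the paper's corollaries differently than the paper's own (one-line) proof. The paper simply says ``It follows from Corollary \ref{Cor:6.5}'': the intended argument is to repeat the polar-decomposition step from the proof of Corollary \ref{Cor:6.7} (dispose of the point-spectrum/kernel cases, then write $T=UP=U-U(\mathcal{I}-P)$ with $U$ unitary, hence normal with $S$-spectrum on the unit sphere, and $U(\mathcal{I}-P)\in S_p(J)$) and then feed $A=U$, $B=-U(\mathcal{I}-P)$ into Corollary \ref{Cor:6.5}, which, unlike Corollary \ref{Cor:6.4}, does not require $\sigma_S(T)$ to contain more than one point; the singleton-spectrum analysis is thereby delegated entirely to the proof of Corollary \ref{Cor:6.5}. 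You instead split on the cardinality of $\sigma_S(T)$ at the outset: the multi-point case is handed to Corollary \ref{Cor:6.7} as a black box, and in the singleton case $\sigma_S(T)=\{s_0\}$ (with $s_0$ real by axial symmetry, as you note) you re-derive the decomposition $T=U+K$ and prove directly --- by the same orthonormal-eigenvector/compact-perturbation technique the paper uses inside the proof of Corollary \ref{Cor:6.5} --- that $U-s_0\mathcal{I}$, hence $T-s_0\mathcal{I}$, is compact, finishing with the invariant-subspace theorem for quaternionic compact operators. Both routes rest on the same machinery. What yours buys: in the singleton case you never have to test the unit circle against the ``smooth Jordan arc'' hypothesis of Corollary \ref{Cor:6.5} (a closed Jordan curve is not literally contained in an arc, a mismatch that the paper's one-line citation glosses over, though your appeal to Corollary \ref{Cor:6.7} inherits that same issue in the other case), and your closing remark correctly isolates why only invariance, not hyperinvariance, can be claimed. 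What it costs: length, plus one implicit ingredient --- the claim that the normal operator $U$, having countable $S$-spectrum, is diagonalizable with eigenvalues accumulating only at $s_0$ requires the quaternionic spectral theorem; this is exactly the level of rigor at which the paper's own proof of Corollary \ref{Cor:6.5} operates (``either $A$ has a non-zero eigenvalue of infinite multiplicity or the eigenvalues of $A$ have an accumulation point\dots''), so it is not a gap relative to the paper, but it deserves to be flagged as an appeal to the spectral theorem rather than a purely elementary step.
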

\begin{proof}
It follows from Corollary \ref{Cor:6.5}.
\end{proof}

\subsection*{Acknowledgments}  The work of the first and third authors was partially supported by Portuguese funds through the CIDMA - Center for Research and Development in Mathematics and Applications, and the Portuguese Foundation for Science and Technology (``FCT--Funda\c{c}\~ao para a Ci\^encia e a Tecnologia''),  within project UID/MAT/ 0416/2013.


\end{document}